\theoremstyle{definition}
\newtheorem{defin}{Definition}[section]
\newtheorem{prop}[defin]{Proposition}
\newtheorem{lem}[defin]{Lemma}
\newtheorem{teo}[defin]{Theorem}
\newtheorem{question}[defin]{Question}
\newtheorem*{claim}{Claim}
\newtheoremstyle{TheoremNum}
{\topsep}{\topsep}              
{\itshape}                      
{}                              
{\bfseries}                     
{.}                             
{ }                             
{\thmname{#1}\thmnote{ \bfseries #3}}
\theoremstyle{TheoremNum}
\DeclareMathOperator{\supp}{ \text{supp }}
\newcommand{\comment}[1]{}
\begin{document}

	\title[Comfort's question and Wallace semigroups]{Comfort's question on powers in $\mathbb Q ^{(2^\mathfrak c)}$ and a Wallace semigroup whose cube is countably compact}
	\author[J. L. J. Fuentes-Maguiña]{Juan Luis Jaisuño Fuentes-Magui\~na}
	\author[A. H. Tomita]{Artur Hideyuki Tomita}
	\address{Depto of Matem\'atica, Instituto of Matem\'atica and Estat\'istica, Universidade of S\~ao Paulo, Rua do Mat\~ao, 1010 -- CEP 05508-090, São Paulo, SP - Brazil}
	\email{tomita@ime.usp.br, juanfm@ime.usp.br}
	\thanks{The first listed author has received financial support from CAPES (Brazil) as a PhD student at the University of São Paulo under supervision of the second listed author.}
	\thanks{The second listed author has received financial support from FAPESP 2021/00177-4.}
	\subjclass{Primary 54D20, 54H11, 22A05, 22A15; Secondary 54A35, 54G20.}
	\date{}
	\commby{}
	\keywords{Topological group, countable compactness, $p-$compactness, selective ultrafilter, ${\mathbb Q}$-vector spaces, Comfort's question, Wallace's problem}

	\begin{abstract}
		We prove that the existence of $\mathfrak{c}$ incomparable selective ultrafilters implies the existence of a Wallace semigroup whose cube is countably compact. In addition, assuming the existence of $2^{\mathfrak c}$ incomparable selective ultrafilters and $2^{< 2^{\mathfrak{c}}} = 2^{\mathfrak{c}}$, we obtain torsion-free topological groups with respect to Comfort's question on the countable compactness of (infinite) powers of a topological group.
	\end{abstract}
	
	\maketitle

\section{Introduction}

    \subsection{Some history}

    In the Open Problems in Topology, W. W. Comfort asked in 1990 the following question: Is there, for every cardinal number $\alpha \leq 2^{\mathfrak{c}}$, a topological group $G$ such that $G^{\gamma}$ is countably compact for all cardinals $\gamma < \alpha$, but $G^{\alpha}$ is not countably compact?

    Tomita \cite{Tomita05} showed in 2005 that it is consistently affirmative for every cardinal $\kappa \leq 2^{\mathfrak c}$. These examples are groups of order 2 and were built from the existence of selective ultrafilters.

    In 2019, Tomita \cite{T19} proved that there exists a consistent torsion-free group that answer this question affirmatively for each $\alpha \leq \omega$. The example is in fact a free Abelian group, but as shown in \cite{T98}, the countable power of a topological free Abelian group is not countably compact. Thus, some different torsion-free group should be used for larger $\alpha$.

    We construct an example, for every infinite cardinal $\alpha \leq 2^\mathfrak c$, using an appropriate group topology on $\mathbb Q ^{(2^\mathfrak c)}$. The construction rely on the technique developed in Bellini, Rodrigues and Tomita \cite{Tomita21}, who constructed a $p$-compact group topology on $\mathbb Q^{(\kappa)}$ for every infinite cardinal $\kappa = \kappa ^\omega$ and a selective ultrafilter $p$, together with some combinatorial properties of many incomparable selective ultrafilters as it was done in \cite{Tomita05}.

    Wallace \cite{W} asked whether a both-sided cancellative countably compact semigroup is a topological group. This problem divides in two: the conditions that a semigroup must be algebraically a group and the conditions that make semigroup topology on a group have a group topology. It was shown that pseudocompact semigroup topologies in a group are group topologies \cite{R}. 

    Robbie and Svetlichny \cite{RS} were the first to show that there exists a both-sided cancellative semigroup which is not a group. Such semigroups have been called Wallace semigroups since then. Other examples were obtained using Martin's Axiom for countable poses, $\mathfrak c$ selective ultrafilters and a single selective ultrafilter. 

    On the other hand, Grant \cite{Grant} showed that there  no Wallace semigroups that are sequentially compact. This motivated him to ask the following questions about other properties related to the power of the semigroup:
    \begin{enumerate}
        \item[(1)] Is every $p-$compact cancellative topological semigroup a topological group? 
        \item[(2)] Is $S$ a topological group if $S \times S$ is countably compact? 
    \end{enumerate}

    Tomita \cite{T96} showed that there are no $p$-compact Wallace semigroups (that is, if $S$ is a Wallace semigroup then $S^{2^\mathfrak c}$ is not countably compact). This answers $(1)$. 

    For the second question, Boero and Tomita \cite{Boero} gave a consistent example of a Wallace semigroup whose square is countably compact. The example was a semigroup of a free Abelian group whose square is countably compact. 

    In \cite{T15} it was shown that it is consistent that there exists a free Abelian group whose every finite power is countably compact, but it was not possible to obtain from it a Wallace semigroup whose finite powers are countably compact.

    In this paper, we obtain a Wallace semigroup whose cube is countably compact making use of the fact that the example is a semigroup of a $\mathbb Q$-vector space rather than a free Abelian group. 
    
    \subsection{Basic notation and terminology} In what follows, $\mathbb{Q}_+$ will denote  the subset of non-negative rationals, $\mathbb N$ the subset of positive integers. Let $\mathbb T$ be the Abelian topological group $\mathbb R/\mathbb Z$ with a metric $d$ given by $d([x],[y]) = \min \lbrace \vert x-y + a \vert : a \in \mathbb{Z} \rbrace$, for each $x, y \in \mathbb{R}$.
    
    Let $X$ be a non-empty subset and $G$ be a group with neutral element 0. The support of a function $f : X \to G$ is defined by $\supp{f} = \lbrace x \in X \mid f(x) \neq 0 \rbrace$. The group $\lbrace f \in G^X : \vert \supp{f} \vert < \omega \rbrace$ will be denoted by  $G^{(X)}$. In addition, if $Y \subset X$, $G^{(Y)}$ will be the subgroup $\lbrace f \in G^{(X)} \mid \supp{f} \subset Y \rbrace$.
    
    For any $a \in X$, we set $\vec {a}$ as the constant with value $a$, $\chi_{a}$ as the characteristic function with $\supp \chi_{a} = \{ a \}$ and $\chi_{a}(a)=1$, and $\chi_{\vec{a}}$ as the constant sequence with value $\chi_{a}$. 


	The set of all free ultrafilters on $\omega$ is denoted by $\omega^*$. Let $p \in \omega^*$, we define a equivalence relation on $X^\omega$ by letting $f\simeq_p g$ if and only if $\{n \in \omega \mid f(n)=g(n)\} \in p$. Given $f \in X^{\omega}$, let $[f]_p$ be the equivalence class determined by $f$. The ultrapower of $X$ is the set of all $p-$equivalence classes in $X^{\omega}$ and will be denoted by $Ult_p(X)$. 
	
	Throughout this paper, we fix an infinite cardinal $\kappa$ such that $\kappa^\omega = \kappa$. We will work with ultrapowers of $\mathbb Q^{(\kappa)}$ taking advantage of its natural $\mathbb Q$-vector space structure. 

\section{Countable compactness, $p-$compactness and selective ultrafilters}

 We assume that all topological spaces are Hausdorff.
    In this section, we review some basic facts about  countable compactness,    $p-$compactness and selective ultrafilters.
    
    \begin{defin}
    A topological space $X$ is countably compact if every countable open cover has a finite subcover.
    \end{defin}
    
    It is not difficult to show that a topological space $X$ is countably compact if, and only if, every sequence in $X$ has an accumulation point. 
    
    Furthermore, as each accumulation point of a sequence is also a $p-$limit of that sequence, for some ultrafilter $p$, we consider the following class of spaces.   
    
    \begin{defin}
    Given $p \in \omega^*$, we say that a topological space $X$ is $p-$compact if every sequence in $X$ has a $p-$limit.
    \end{defin}
    
    Every $p-$compact space is countably compact. We already know that Tychonoff's theorem stands for $p-$compactness. Thereby, every power of a $p-$compact space is countably compact. In addition, by Ginsburg and Saks' theorem \footnote{Theorem 2.6 of \cite{G}.}, $X^{2^{\mathfrak c}}$ is countably compact if, and only if, $X$ is $p-$compact for some $p \in \omega^*$.
    
    On other hand, we will need the following class of ultrafilters.
	
	\begin{defin}
    We say that $p \in \omega^*$ is selective if, for every partition $\lbrace P_n \mid n \in \omega \rbrace$ of $\omega$, exists $m \in \omega$ such that $P_m \in p$ or exists $B \in p$ such that $\vert B \cap P_n \vert \leq 1$, for each $n \in \omega$. 
    \end{defin}
    
    \begin{prop} \label{ufs}
    Let $p$ be a selective ultrafilter. If $\lbrace B_n \mid n \in \omega  \rbrace \subset p$, then there exists an increasing sequence $\lbrace a_n \mid n \in \omega \rbrace \in p$ such that $a_n \in B_n$, for each $n \in \omega$. 
    \end{prop}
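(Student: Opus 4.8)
The plan is to reduce to a decreasing family and then find a single set $A\in p$ on which the ``depth'' of a point grows fast enough that the $n$-th element of $A$ already lies in $B_n$. First I would replace each $B_n$ by $C_n=\bigcap_{k\le n}B_k$. Each $C_n\in p$ because $p$ is a filter, the family $\{C_n\}$ is decreasing, and $C_n\subseteq B_n$; hence any increasing sequence in $p$ with $a_n\in C_n$ also satisfies $a_n\in B_n$. So I may assume $B_0\supseteq B_1\supseteq\cdots$. Put $D=\bigcap_{n}B_n$. If $D\in p$, then the increasing enumeration $\{a_n\}$ of $D$ already lies in $p$ and satisfies $a_n\in D\subseteq B_n$, and there is nothing more to prove. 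Assume therefore $D\notin p$; then $B_0\setminus D\in p$, and for every $k\in B_0\setminus D$ the depth $f(k)=\max\{n : k\in B_n\}$ is a well-defined natural number, finite precisely because $k\notin D$. Extending $f$ arbitrarily to $\omega$, its level sets satisfy $f^{-1}(\{n\})\cap(B_0\setminus D)\subseteq B_n\setminus B_{n+1}$, and since $B_{n+1}\in p$ none of these is in $p$.

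The heart of the argument is to produce $A\in p$ on which $f$ is strictly increasing. I would first apply the selector form of selectivity to the partition of $\omega$ consisting of $P_n=B_n\setminus B_{n+1}$ for $n\ge 1$ together with $P_0=(\omega\setminus B_1)\cup D$: by the previous paragraph, and the fact that a union of two sets outside an ultrafilter is again outside it, no $P_n$ belongs to $p$, so there is a selector $S\in p$ meeting each $P_n$ in at most one point. Discarding the at most one point of $S$ lying in $P_0$ leaves a set $S'=S\cap(B_1\setminus D)\in p$ on which $f$ is injective (distinct points of $S'$ lie in distinct pieces $P_n$, $n\ge 1$, on each of which $f$ is constant). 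Then I would invoke that a selective ultrafilter is Ramsey, an equivalent reformulation of the definition above: colouring a pair $\{i<j\}$ from $S'$ according to whether $f(i)<f(j)$ or $f(i)>f(j)$ yields a homogeneous $A\in p$, $A\subseteq S'$, on which $f$ is monotone, and since a strictly decreasing sequence in $\omega$ is impossible, $f$ is strictly increasing on $A$. This monotonicity step, upgrading the ``distinct depths'' coming from a single selector to ``increasing depths'', is the main obstacle: the bare selector property gives no control of the value-order of the selected points against their depth-order, so an additional Ramsey-type argument is genuinely needed.

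Finally I would read off the conclusion. Writing $A=\{a_0<a_1<\cdots\}$ in increasing order, the values $f(a_0)<f(a_1)<\cdots$ form a strictly increasing sequence of naturals, whence $f(a_i)\ge i$ for every $i$; therefore $a_i\in B_{f(a_i)}\subseteq B_i$ by monotonicity of the $B_n$. Recalling that the initial reduction had replaced each original set by a subset of it, this gives $a_i$ in the original $B_i$ as well. Since $A\in p$ is exactly the range of the increasing sequence $\{a_i\}$, it is the required sequence, which completes the proof.
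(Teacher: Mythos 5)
Your argument is correct, but a comparison with the paper is moot on one point: the paper states Proposition \ref{ufs} without proof, as a standard fact about selective ultrafilters, so there is no internal argument to measure against. Judged on its own, your reduction to a decreasing family, the case split on $D=\bigcap_n B_n$, the depth function $f$, and the selector obtained from the partition $P_0=(\omega\setminus B_1)\cup D$, $P_n=B_n\setminus B_{n+1}$ ($n\ge 1$) are all sound and reproduce the first half of the classical argument. The step that deserves scrutiny is the appeal to the Ramsey property. The equivalence between selectivity (as defined in the paper, via partitions) and the Ramsey property for colourings of pairs is a theorem of Kunen, not a rephrasing of the definition; moreover, the standard proof of that theorem runs through a diagonalization of the following kind: given $X_n\in p$ for $n\in\omega$, find $Z\in p$ such that $z'\in X_z$ whenever $z<z'$ are both in $Z$. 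That diagonalization is essentially the proposition you are proving (in a slightly stronger form). So your proof, while valid, deduces an easier statement from a harder known one, and it would look circular if the Ramsey theorem were proved in the usual way; at minimum it needs a citation rather than the phrase ``equivalent reformulation of the definition.''

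It is worth noting that your own construction can be finished elementarily, staying entirely inside the paper's definition, by a second application of selectivity instead of Ramsey. Since $f$ is injective on $S'$ with values $\ge 1$, the set $S'\setminus B_n=\{s\in S' : f(s)<n\}$ has at most $n-1$ elements, so it is finite for every $n$. Put $g(n)=1+\max\{s\in S' : f(s)<n\}$ (and $g(n)=0$ when that set is empty), and define $t_0=0$, $t_{k+1}=\max\{g(k+1),\,t_k+1\}$. The intervals $J_k=[t_k,t_{k+1})$ form a partition of $\omega$ into nonempty finite sets, none of which lies in $p$, so selectivity yields $E\in p$ meeting each $J_k$ in at most one point. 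Then $A=S'\cap E\in p$, and if $A=\{a_0<a_1<\cdots\}$ is its increasing enumeration, the points $a_0,\dots,a_i$ lie in distinct intervals, so $a_i$ lies in some $J_k$ with $k\ge i$, whence $a_i\ge t_i\ge g(i)$ for $i\ge 1$; by the definition of $g$ this forces $f(a_i)\ge i$, i.e.\ $a_i\in B_i$, while $a_0\in S'\subseteq B_0$. This is the classical two-applications-of-selectivity proof (morally: P-point plus Q-point), and it is the route you would want in print.
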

    
    Now, we present a pre-order on $\omega^*$ known as the Rudin-Keisler order.
    
    \begin{defin}
    Given $p, q \in \omega^*$, we say that $p \leq q$ if there exists a function $f : \omega \to \omega$ such that $\beta f(q) = p$, where $\beta f$ is the \u Cech-Stone extension of $f$.
    \end{defin}

    The next result gives us some combinatorial properties of many incomparable selective ultrafilters.  

    \begin{lem}[Lemma 3.6, \cite{Tomita05}] \label{variosuf}
    Let $\lbrace p_{j} \mid j \in \omega \rbrace$ be a family of incomparable selective ultrafilters. For each $j \in \omega$, let $\left( a^j_k \right)_{k \in \omega} \subset \omega$ be an increasing sequence such that $\lbrace a^j_k \mid k \in \omega  \rbrace \in p_j$. If $k < a^j_k$, for any $j,k \in \omega$, then exists $\lbrace I_j \mid j \in \omega \rbrace$ a disjoint family of subsets of $\omega$ such that 
        \begin{enumerate}
            \item[i)] $\lbrace a^j_k \mid k \in I_j  \rbrace \in p_j$, 
            \item[ii)] $\lbrace [k,a^j_k] \mid k \in I_j, j \in \omega \rbrace$ is a pairwise disjoint family.
        \end{enumerate}
    \end{lem}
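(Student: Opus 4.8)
The plan is to transfer the problem from the value side to the index side, reducing it to a statement purely about the ultrafilters, and then to solve it by a simultaneous recursion powered by the two faces of selectivity present in Proposition~\ref{ufs}, together with the Rudin--Keisler minimality of selective ultrafilters (which is where incomparability enters). First I would observe that for each $j$ the map $k \mapsto a^j_k$ is a strictly increasing injection of $\omega$ onto $A_j := \{a^j_k \mid k \in \omega\} \in p_j$, so the push-forward $q_j$ defined by $X \in q_j \iff \{a^j_k \mid k \in X\} \in p_j$ is an ultrafilter Rudin--Keisler equivalent to $p_j$; hence each $q_j$ is selective and the family $\{q_j \mid j \in \omega\}$ is again pairwise incomparable. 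Condition (i) becomes exactly $I_j \in q_j$, and since (ii) forces the left endpoints $k$ to be pairwise distinct, the disjointness of $\{I_j\}$ is automatic once (ii) holds. So the task is to find $I_j \in q_j$ for which the intervals $[k,a^j_k]$, with $k \in I_j$ and $j \in \omega$, are pairwise disjoint.

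I would assemble three tools. For the \emph{within-}$j$ spacing: since $q_j$ is selective it is rapid (equivalently, by the $Q$-point form of Proposition~\ref{ufs} applied to a fast partition into finite sets), so I can thin $A_j$ to a set $V_j \in q_j$ whose enumeration grows so fast that $[k,a^j_k] \cap [k',a^{j'}_{k'}]=\varnothing$ for distinct $k,k'\in V_j$ of the same colour $j$; this settles collisions among intervals of a single ultrafilter. For the \emph{cross-}$j$ separation: the $q_j$ are pairwise distinct and, being selective, are $P$-points, so a pseudo-intersection argument (choosing $T_j \in q_j$ almost contained in a separating set for each pair, then setting $E_j = T_j \setminus \bigcup_{i<j} T_i$) produces pairwise \emph{disjoint} $E_j \in q_j$. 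Finally, and crucially, for any finite-to-one grouping map $\pi$ the push-forwards $\beta\pi(q_j)$ stay pairwise inequivalent: a coincidence $\beta\pi(q_i)=\beta\pi(q_j)$ would place both $q_i,q_j$ above a common non-principal ultrafilter, and minimality of selective ultrafilters would then force $q_i \equiv q_j$, contradicting incomparability. This is what will let the separation survive coarsening into blocks when I need room between intervals.

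With these in hand I would run a recursion on stages $n\in\omega$, maintaining an increasing threshold $\tau_n$ and finite initial segments of $I_j$ for the active indices $j \le n$, with all committed intervals contained in $[0,\tau_n]$. At stage $n$ I activate $q_n$ and extend each active $I_j$ by the next available term of its pre-threaded sequence $V_j$ lying entirely beyond $\tau_n$; advancing past the current threshold makes every newly placed interval disjoint from all previous ones, so (ii) is maintained, while the separation of $\{E_j\}$ (and its block coarsenings via $\pi$) keeps the finitely many active ultrafilters confined to mutually disjoint regions so their new intervals do not clash with each other. Largeness in the limit is secured by Proposition~\ref{ufs}: the terms I actually commit for $j$ form a cofinal subsequence of a sequence lying in $p_j$, so as long as only finitely many terms of $V_j$ are ever rejected, the resulting $\{a^j_k \mid k\in I_j\}$ is a $p_j$-subset of a set in $p_j$, giving (i).

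The step I expect to be the main obstacle is precisely this last ``only finitely many rejections'' clause: the intervals $[w,a^i_w]$ of the other ultrafilters grow without bound, and their sparse union $\bigcup_{w\in V_i}[w,a^i_w]$ could, \emph{a priori}, be $q_j$-large, so a naive greedy merge might reject a $q_j$-large set of terms and destroy (i). A fixed reservation of ``buffer'' positions does not help, since some $q_j$ may concentrate exactly on the reserved cells. The resolution must be adaptive and is where incomparability is used essentially: I would need to show that for each $j$ the set of terms of $V_j$ whose interval ever meets another ultrafilter's intervals is $q_j$-small, arranging the block separation and the growth rates (through the minimality fact above and repeated use of Proposition~\ref{ufs}) so that each ultrafilter's intervals eventually avoid all the others'. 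This delicate simultaneous bookkeeping of largeness against the spreading of infinitely many interval families is the technical heart of the argument, and it is exactly the combinatorial phenomenon for incomparable selective ultrafilters exploited in \cite{Tomita05}.
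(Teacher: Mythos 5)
Your reduction and toolkit are sound: each $q_j$ is isomorphic to $p_j$ via the bijection $k\mapsto a^j_k$ onto a member of $p_j$, so the $q_j$ are selective and pairwise incomparable, condition (i) becomes $I_j\in q_j$, and all three tools you list are correct (the Ramsey-type thinning giving within-colour disjointness of the intervals, the P-point disjointification of countably many distinct ultrafilters, and the preservation of pairwise distinctness under finite-to-one maps via RK-minimality of selective ultrafilters). But the proposal stops exactly where the lemma begins. Making (ii) hold with \emph{infinite} $I_j$ is trivial and needs none of these hypotheses; the entire content of the statement is that (ii) can be achieved while keeping each $I_j$ in the ultrafilter $q_j$. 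Your greedy recursion commits, for each active $j$, only the next term of $V_j$ beyond an ever-growing threshold $\tau_n$, so it discards a set of terms that there is no reason to believe is $q_j$-small, and hence no reason to believe $I_j\in q_j$; disjointness of the endpoint sets $E_j$ gives no control over the intervals $[k,a^j_k]$, whose lengths are unbounded. You concede this yourself: ``I would need to show that for each $j$ the set of terms of $V_j$ whose interval ever meets another ultrafilter's intervals is $q_j$-small.'' That sentence \emph{is} the lemma; nothing in the proposal proves it, and announcing that the resolution ``must be adaptive'' and will use minimality plus Proposition~\ref{ufs} does not identify the partition, the separating sets, or the argument. This is a genuine gap, not a verification left to the reader.

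For comparison: the present paper does not prove Lemma~\ref{variosuf} at all --- it imports it as Lemma 3.6 of \cite{Tomita05} --- so the only meaningful benchmark is the argument there, whose shape is essentially the opposite of a greedy merge. Largeness of $I_j$ is not maintained through a recursion and then defended against rejections; it is obtained outright, by exhibiting each $I_j$ (up to finitely many points) as an intersection of explicitly $q_j$-large sets. Concretely, one builds a single interval partition of $\omega$ into blocks growing fast enough that every interval $[k,a^j_k]$ with $k$ past a colour-dependent finite threshold meets at most two consecutive blocks; for each pair $i\neq j$ one applies your third tool not to that partition but to finite-to-one \emph{coarsenings} of it (grouping consecutive blocks), which by non-near-coherence yields a union of coarse blocks lying in the image of $q_i$ whose complementary union lies in the image of $q_j$, the coarsening absorbing the block-adjacency problem; pulling these separations back and diagonalizing over the countably many pairs with the P-point/selectivity property produces the sets $I_j$ with (i) and (ii) simultaneously. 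That construction --- a concrete partition, concrete separating sets in each $q_j$, and a diagonalization --- is precisely the missing heart of your outline, and without it the proposal is a correct setup followed by an unproven claim.
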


\section{Constructing homomorphisms by arc functions}
	
	In \cite{Tomita21} it was defined a $p-$compact group topology on $\mathbb Q ^{(\kappa)}$, for a selective ultrafilter $p$, through a family of homomorphisms from $\mathbb Q ^{(\kappa)}$ to $\mathbb T$ such that assigns $p-$limits for a basis of its ultrapower. The main idea used for it lies in the construction of an appropriate sequence of arc functions by means of arc equations and rational stacks.   
	
	Here, assuming the existence of $\kappa$ incomparable selective ultrafilters, we will construct homomorphisms from $\mathbb Q ^{(\kappa)}$ to $\mathbb T$ such that assigns $p_{\xi}-$limits for basis of many ultraprowers at the same time, for every incomparable selective ultrafilter $p_{\xi}$. To achieve this, we will make use of Lemma \ref{variosuf} as it was done in \cite{Tomita05}.
	
	Thus, we introduce the following concepts as in \cite{Tomita21}.

    \begin{defin}
        Let $\mathbb{B} = \lbrace I + \mathbb{Z} \mid \emptyset \neq I \subset \mathbb{R} \: \textrm{is an open interval} \rbrace$ be the family of all nonempty open arcs in $\mathbb{T}$. We will call $\varphi : \kappa \to \mathbb{B}$ an arc function if its support, denoted by $\supp{\varphi} = \lbrace \alpha \in \kappa \mid \varphi (\alpha) \neq \mathbb{T}  \rbrace$, is finite. In addition, if $\delta \in \left(0, \frac{1}{2} \right)$ and the arcs $\varphi(\alpha)$ have length $\delta$, for every $\alpha \in \supp{\varphi}$, then we will call it an $\delta -$arc function.
        
        Given two arc functions $\psi$ and $\varphi$, we will say that $\psi \leq \varphi$ if, for every $\alpha \in \kappa$, $\psi(\alpha) = \varphi(\alpha)$ or $\overline{\psi(\alpha)} \subseteq \varphi(\alpha)$.
    \end{defin}
    
    \begin{defin} 
    Let $\phi$ be an arc function. If $S \in \mathbb{N}$, then $S.\phi$ is the arc function defined by ($S.\phi)(\mu)=S.\phi(\mu)$ for every $\mu \in \kappa$. For any $a \in \mathbb Z^{(\kappa)}$, we set $\phi(a) = \sum\limits_{\xi \in \supp{a}} a(\xi) \phi(\xi)$.
	\end{defin}

	\begin{defin} 
		An arc equation is a quintuple $(\phi, A, \mathcal A, S, U)$ where $\phi$ is an arc function,  $A\subseteq \omega$, ${\mathcal A} \subseteq ({\mathbb Z}^{(\kappa)})^A$, $S$ is a positive integer and $U=(U_f)_{f \in \mathcal A}$ is a family of elements of ${\mathbb B}$.
		
		Given $n\in A$, we say that an arc function $\psi$ is an $n$-solution for $(\phi, A, \mathcal A, S, U)$ if $S\psi \leq \phi$ and $\sum\limits_{\mu \in \supp f(n)} \psi(f(n)) \subseteq U_f$, for each $f\in {\mathcal A}$. 
	\end{defin}
	
	\begin{defin} A rational stack is a nonuple $\langle   \mathcal B, \nu, \zeta, K, A, k_0, k_1, l, T\rangle$ conformed by
		\begin{itemize}
			\item $T$ is a positive integer,
			\item $A\subseteq \omega$ is infinite,
			\item $k_0\leq k_1$ are natural numbers with $k_1>0$,
			\item $l:k_1\rightarrow \omega$,
			\item $\nu:k_0\rightarrow \kappa$,
			\item $\zeta:k_1\rightarrow \kappa^\omega$,
			\item $K:\omega\rightarrow \omega\setminus 2$ is such that for every $n \in A$, $n!T\mid K_n$,
			\item $\mathcal B= \lbrace {{\mathcal B}}_{i,j} \mid i< k_1, j < l_i \rbrace$, where $\mathcal B_{i, j}\subseteq \left( \mathbb{Z}^{(\kappa)} \right)^{\omega}$ is finite,
		\end{itemize}
	satisfying the following:	
		\begin{enumerate}[label=\roman*)]
			\item $\zeta_i(n)=\nu_i$ for every $i<k_0$ and $n \in A$,
			\item  The elements of $\lbrace \nu_i \mid i<k_0 \rbrace$ and $\lbrace \zeta_j(n) \mid k_0\leq j<k_1, n \in A \rbrace$ are pairwise distinct,
			\item $\zeta_i(n) \in \supp h(n)$, for each $ i<k_1$, $j<l_i$, $h \in {\mathcal B}_{i,j}$ and $n\in A$,
			\item $\zeta_{i}(n) \notin \supp h(n)$, for each, $i<i_*<k_1$, $j< l_{i_*}$ and $h \in {\mathcal B}_{i_*,j}$ and $n \in A$,
			\item $\left( \frac{h(n)(\zeta_i(n))}{K_n}\right)_{n\in A}$ converges monotonically to $+\infty$, $-\infty$ or a real number, for each $i< k_1$, $j<l_i$ and $h\in {\mathcal B}_{i,j}$,
			\item For every $i<k_1, j<l_i$, there exists $h_* \in \mathcal B_{i, j}$ such that for every $h \in \mathcal B_{i, j}$, $\left(\frac{h(n)(\zeta_i(n))}{h_{*}(n)(\zeta_i(n))}\right)_{n\in A}$ converges to a real number $\theta_{h_*}^h$ and $(\theta_{h_*}^h:\,h \in {\mathcal B}_{i,j})$ is linearly independent (as a $\mathbb Q$-vector space),
			\item For each $i<k_1$, $j'<j<l_i$, $h \in {\mathcal B}_{i,j}$ and $h'\in {\mathcal B}_{i,j'}$, $\left(\frac{h(n)(\zeta_i(n))}{h'(n)(\zeta_i(n))}\right)_{n\in A}$ converges monotonically to $0$,
			\item For each $ i<k_0$ there exists $j<l_i$ such that $\frac{K}{T} . \chi_{\vec{\nu_i}}\in{\mathcal B}_{i,j}$,
			\item $\left(|h(n)(\zeta_i(n))|\right)_{n\in A}$ is strictly increasing, for each $i<k_1, j<l_i$ and $h \in {\mathcal B}_{i,j}$,
			\item  For each $i<k_1, j<l_i$ and distinct $h, h_* \in {\mathcal B}_{i,j}$, either 
			    \begin{itemize}
				    \item $|h(n)(\zeta_i(n))|> |h_*(n)(\zeta_i(n))|$  for each  $n \in A$, or
				    \item   $|h(n)(\zeta_i(n))|= |h_*(n)(\zeta_i(n))|$  for each  $n \in A$, or
				    \item  $|h(n)(\zeta_i(n))|< |h_*(n)(\zeta_i(n))|$  for each  $n \in A$,
			    \end{itemize}
			\item For all $\mu \in \kappa$, $i \in \omega$ such that $k_0\leq i<k_1$ and $g \in \bigcup_{j<l_i}B_{i, j}$, if $\{n \in \omega: \mu \in \supp g(n)(\mu)\}$ then  $\left( \frac{g(n)(\mu)}{K_n}\right)_{n\in A}$ is constant.
		\end{enumerate}
	\end{defin}

    The next results will be useful for our purpose.

    \begin{prop}[Lemma 4.1, \cite{Tomita21}] \label{lema3.1}
    Let $p$ be a selective ultrafilter. Given $B \in p$ and $\mathcal F$ a finite subset of $\left( \mathbb{Q}^{(\kappa)} \right)^{\omega}$ such that $\lbrace [f]_{p} \mid f \in \mathcal{F} \rbrace \cup \lbrace [\chi_{ \overrightarrow{\alpha}}]_{p} \mid \alpha \in \kappa \rbrace$ is linearly independent, then there exists a rational stack $S$ and functions $\mathcal{M} : \mathcal{A} \times \mathcal{C}  \to \mathbb{Z}$ and $\mathcal{N} : \mathcal{C} \times \mathcal{A}  \to \mathbb{Z}$, where $\mathcal{A} = \mathcal{F} \cup \lbrace \chi_{\overrightarrow{\nu_i}} \mid i < k_0  \rbrace$ and $\mathcal{C} = \dfrac{\bigcup_{i \in k_1, j \in l_i} B_{i,j} }{K}$, such that 
        \begin{enumerate}
            \item[a)] $A \subseteq B$, $K . \mathcal{A} \subset \left( \mathbb{Z}^{(\kappa)} \right)^{\omega}$, $K . \mathcal{C} \subset \left( \mathbb{Z}^{(\kappa)} \right)^{\omega}$,
            \item[b)] $\lbrace [f]_p \mid f \in \mathcal{A} \rbrace$ and $\lbrace [h]_p \mid h \in \mathcal{C} \rbrace$ generate the same vector subspace, 
            \item[c)] $f_n = \displaystyle\sum_{h \in \mathcal{C}} \mathcal{M}_{f,h} h_n$, for every $n \in A$ and $f \in \mathcal{A}$, 
            \item[d)] $h_n = \dfrac{1}{T^2} \displaystyle\sum_{h \in \mathcal{C}} \mathcal{N}_{h,f} f_n$, for every $n \in A$ and $h \in \mathcal{C}$. 
        \end{enumerate}
    \end{prop}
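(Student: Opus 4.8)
The plan is to manufacture the rational stack by finitely many refinements of $B$ inside $p$, each turning a statement that holds for $p$-many $n$ into one holding on a single set of $p$, and then to read off the combinatorial data and the transition matrices from an echelon form obtained by linear algebra in the ultrapower. The first step is to clear denominators. Since $\mathcal{F}$ is finite and every $f(n)$ has finite support, for each $n$ the finitely many rationals $\{ f(n)(\mu) : f \in \mathcal{F},\ \mu \in \kappa \}$ have a common denominator; multiplying it by $n!\,T$ produces a function $K : \omega \to \omega \setminus 2$ with $n!\,T \mid K_n$ and $K_n \cdot f(n) \in \mathbb{Z}^{(\kappa)}$ for all $f$ and $n$. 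This secures the integrality demands in a) once $\mathcal{A}$ and $\mathcal{C}$ are pinned down, and it fixes the normalization against which all subsequent growth is measured.

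The second step is to tame the sequences using $p$. For every $f \in \mathcal{F}$ and every coordinate $\mu$ lying in $p$-many supports $\supp f(n)$, the sequence $n \mapsto \tfrac{f(n)(\mu)}{K_n}$ can be refined along a set of $p$ to be monotone and convergent in $[-\infty,+\infty]$, and likewise each ratio $\tfrac{f(n)(\mu)}{f'(n)(\mu')}$ can be forced monotone convergent. As $\mathcal{F}$ is finite and, modulo $p$, only finitely many coordinates are tracked at each stage, every stage imposes only finitely many monotonicity and convergence requirements, and Proposition \ref{ufs} collapses the countably many stages into a single increasing sequence $A \in p$ with $A \subseteq B$ on which all selected sequences behave monotonically. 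This delivers the monotonicity conditions v), vii), ix), x) and the normalization xi).

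The third step extracts the combinatorial skeleton $k_0, k_1, \nu, \zeta, l$ from the stabilized picture. Coordinates whose tracked value is eventually constant after division by $K$ form the fixed layer $i < k_0$, recorded by $\nu$; these contribute the added generators $\chi_{\vec{\nu_i}}$ to $\mathcal{A}$ and account for i) and viii), while the genuinely growing coordinates give the layers $k_0 \le i < k_1$, recorded by $\zeta_i \in \kappa^{\omega}$ and kept pairwise distinct to give ii). Performing Gaussian elimination inside the ultrapower, clearing each leading coordinate from the elements lying below it, produces the triangular support condition iv) while leaving the span of $\{[h]_p\}$ unchanged, which is exactly b); the index $j < l_i$ then stratifies the survivors over a fixed coordinate by order of magnitude, so that across distinct $j$ the ratios vanish (vii) and within a fixed $j$ a dominant $h_*$ exists whose companion limit ratios $(\theta^h_{h_*})$ are $\mathbb{Q}$-linearly independent, as in vi). Expanding each $f \in \mathcal{A}$ in the basis $\mathcal{C} = \tfrac{1}{K}\bigcup_{i,j}\mathcal{B}_{i,j}$ gives the integral matrix $\mathcal{M}$ of c); inverting it by Cramer's rule, whose denominators divide $\det \mathcal{M}$, and arranging the scaling so that $\det \mathcal{M}$ stays a divisor of $T^2$, produces the factor $\tfrac{1}{T^2}$ demanded by $\mathcal{N}$ in d).

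The main obstacle is this last step: the doubly-indexed hierarchy must realize magnitude domination (vii), strict monotonicity (ix), the trichotomy (x) and the $\mathbb{Q}$-linear independence of the limit ratios (vi) all at once, while simultaneously preserving the span and the triangular support. Each refinement that corrects one condition threatens the ones already achieved, so the argument must interleave the elimination with the monotonization rather than run them in sequence; it is precisely the diagonalization afforded by Proposition \ref{ufs} together with the selectivity of $p$ that makes this simultaneous control possible.
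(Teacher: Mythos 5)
A preliminary remark: the paper you were given does not prove Proposition \ref{lema3.1} at all; it is imported verbatim as Lemma 4.1 of \cite{Tomita21}, so your attempt can only be measured against the proof in that reference, which is a long recursive construction. Your outline does capture the correct general shape (clear denominators, refine along $p$ for monotonicity, extract a layered structure and transition matrices), but it has a genuine gap at exactly the point you yourself flag as ``the main obstacle.'' Condition vi) --- that within each class $\mathcal{B}_{i,j}$ the limit ratios $\theta^{h}_{h_*}$ are linearly independent over $\mathbb{Q}$ --- can never be achieved by the tools you invoke. Refining sets of $p$ (Proposition \ref{ufs}) only stabilizes sequences that are already given; it cannot destroy a rational dependence among limits. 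When $\theta^{h}_{h_*}\in\mathbb{Q}$ for distinct $h,h_*$, the family itself must be rewritten, replacing $h$ by an integer combination such as $a h - b h_*$; this changes growth rates, so it can demote the new element to a lower stratum $j'$, break the strict monotonicity ix), the trichotomy x), and the support condition iv), and it forces a re-scaling of $K$ and $T$. The heart of the proof in \cite{Tomita21} is precisely the design of this rewriting procedure and the argument that it terminates (essentially an induction on dimension/complexity of the finite family) while preserving the span, i.e.\ b). Your final paragraph names this interleaving problem and then resolves it by asserting that ``the diagonalization afforded by Proposition \ref{ufs} together with the selectivity of $p$ makes this simultaneous control possible.'' That is an appeal to the conclusion, not an argument: selectivity handles countably many set-refinements, not the algebraic modification of the generating family.

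There is also a circularity in your bookkeeping of denominators. In step one you define $K_n$ as a common denominator multiplied by $n!\,T$, but $T$ is only determined at the end of your argument, where it must absorb the denominators arising from inverting the change of basis (the factor $\frac{1}{T^2}$ in d)). As written, $K$ depends on $T$ and $T$ depends on the matrix $\mathcal{M}$, which depends on $\mathcal{C}=\frac{1}{K}\bigcup_{i,j}\mathcal{B}_{i,j}$, hence on $K$. The cited proof breaks this loop by performing the linear algebra in the ultrapower first, reading off $T$ from the resulting rational coefficients, and only then fixing $K$; your sketch runs the steps in the opposite order. Moreover, ``inverting $\mathcal{M}$ by Cramer's rule'' presupposes that $\mathcal{M}$ is square and invertible, which is not given: nothing in the statement makes $\lbrace [h]_p \mid h\in\mathcal{C}\rbrace$ a linearly independent set, so $|\mathcal{A}|=|\mathcal{C}|$ and the nonvanishing of $\det\mathcal{M}$ would themselves have to be arranged by the construction. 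These two points --- the unproved termination of the rewriting needed for vi), and the unresolved $T$--$K$--$\mathcal{M}$ circularity --- are where the actual mathematical content of Lemma 4.1 of \cite{Tomita21} lives, and the proposal does not supply either.
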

    
    \begin{prop}[Lemma 4.2, \cite{Tomita21}] \label{lema3.2}
    Let $\mathcal{S}$, $\mathcal{A}$, $\mathcal{C}$, $\mathcal{M}$, $\mathcal{N}$ be as in Proposition \ref{lema3.1}. Given $\varepsilon > 0$ and $D \subset \kappa$ finite, there exists $B \subset A$ cofinite in $A$ and a sequence of positive real numbers $(\gamma_n)_{n \in B}$ such that:
    
    For any $n \in B$, for any family $W = (W_h)_{h \in \mathcal{C}}$ of open arcs of length $\varepsilon$ and for every $\varepsilon -$arc function $\psi$ with $\supp{\psi} \subseteq D \setminus \lbrace \nu_i \mid i < k_0 \rbrace$, exists an $n -$solution of length $\gamma_n$ for the arc equation $(\psi, B, K. \mathcal{C}, K_n, W)$. 
    \end{prop}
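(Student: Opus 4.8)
The plan is to build the required $n$-solution explicitly as a $\gamma_n$-arc function, choosing the cofinite set $B\subseteq A$ and the length sequence $(\gamma_n)_{n\in B}$ first, uniformly in $\psi$ and $W$, and only afterwards selecting the centres of the arcs so as to meet the two defining requirements $K_n\psi'\le\psi$ and $\psi'((K.h)(n))\subseteq W_h$. Throughout I write $(K.h)(n)=K_nh(n)$ for $h\in\mathcal C$, so that $K.\mathcal C=\bigcup_{i,j}\mathcal B_{i,j}$ and the targets are indexed so that the sequence $K.h$ is matched with the arc $W_h$.

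First I would shrink to a cofinite $B$ on which all the asymptotic data of the rational stack are in force: that $\zeta_i(n)\notin D$ for every $i\ge k_0$ (so that no stack coordinate meets $\supp\psi$), that the monotonicity of conditions (ix) and (x) holds throughout $B$, and that every ratio occurring in (v), (vi) and (vii) is within a prescribed tolerance of its limit. I would then take $\gamma_n$ below both $\varepsilon/(2K_n)$ and $\varepsilon/(2\Sigma_n)$, where $\Sigma_n=\max_{h\in\mathcal C}\sum_\mu|(K.h)(n)(\mu)|$. This forces every arc $K_n\psi'(\mu)$ and every arc $\psi'((K.h)(n))$ to have length below $\varepsilon$, so that the two containments hold as soon as the arcs are \emph{centred} to within $\varepsilon/4$ of the centre of $\psi(\mu)$ and of $W_h$, respectively; only this centring then remains to be arranged.

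For the centring I would separate the coordinates into the \emph{determined} ones $\mu\in\supp\psi$ and the \emph{free} ones, chiefly the stack coordinates $\zeta_i(n)$. On a determined coordinate the requirement $\overline{K_n\psi'(\mu)}\subseteq\psi(\mu)$ forces the centre of $\psi'(\mu)$ to be one of the $K_n$ preimages, under multiplication by $K_n$, of the centre of $\psi(\mu)$; picking any one pins $\psi'(\mu)$ down and records a known contribution of the determined coordinates to each constraint. It then remains to choose the free centres $c_\mu$ so that, for every $h\in\mathcal C$,
\[
\sum_{\mu\in\supp (K.h)(n)}(K.h)(n)(\mu)\,c_\mu\;\equiv\;w_h\pmod 1
\]
to within $\varepsilon/4$, where $w_h$ denotes the centre of $W_h$.

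The heart of the argument, and the main obstacle, is to solve this system over $\mathbb T$ within the tolerance left by the length budget, uniformly over all admissible targets. Here the rational stack is engineered to make the system triangular and non-degenerate: the support conditions (iii)--(iv) arrange that the coordinate $\zeta_i$ is governed by block $i$ and does not disturb the lower blocks, so the constraints can be treated block by block; inside a block condition (vii) makes successive levels strictly subordinate in magnitude at $\zeta_i$, while the $\mathbb Q$-linear independence of the limit ratios $\theta^h_{h_*}$ in (vi) forces the dominant-order coefficient matrix to have full rank, so the leading system is solvable. Processing the blocks and levels in decreasing order of magnitude and exploiting the monotone convergences of (v), (vii), (ix) and (x), I would bound the residual contributions of the sub-dominant terms and of the discrete choices made at the determined coordinates, and verify that these stay below $\varepsilon/4$ for all $n\in B$. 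Controlling these residuals---and confirming, through the decompositions (c)--(d) of Proposition~\ref{lema3.1} that tie $\mathcal A$ and $\mathcal C$ to a common $\mathbb Q$-subspace, that the dominant matrix really has the required rank---is the technical core of the proof.
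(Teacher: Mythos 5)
The paper never proves this proposition: it is imported verbatim as Lemma 4.2 of \cite{Tomita21}, so there is no internal proof to compare against, and your attempt must be judged on its own merits against the argument in that reference. Your outer skeleton is the right frame --- pass to a cofinite $B$ on which the stack's asymptotics hold and the coordinates $\zeta_i(n)$, $k_0\leq i<k_1$, avoid $D$; take $\gamma_n$ small against $K_n$ and the coefficient sums; reduce everything to placing the centres of the free arcs. The gap is exactly in the step you yourself flag as the core, and it is a wrong idea, not just a missing verification. Inside a single block-level $\mathcal B_{i,j}$ \emph{all} the constraints ride on one free coordinate $\zeta_i(n)$: there is one unknown centre $c\in\mathbb T$ and $|\mathcal B_{i,j}|$ constraints $K_n h(n)(\zeta_i(n))\,c\approx w_h$. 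The associated map $\mathbb T\to\mathbb T^{|\mathcal B_{i,j}|}$ is a winding line; it is never surjective once $|\mathcal B_{i,j}|\geq 2$, so no rank condition on a ``dominant coefficient matrix'' can give solvability --- exact solvability is simply false, and full rank is not the relevant property. What makes the system \emph{approximately} solvable is Kronecker's theorem on irrational windings: by (vi) the ratios $h(n)(\zeta_i(n))/h_*(n)(\zeta_i(n))$ converge to rationally independent reals $\theta^h_{h_*}$, and by (ix) $|h_*(n)(\zeta_i(n))|\to\infty$, so for all sufficiently large $n$ the curve $c\mapsto \bigl(K_n h(n)(\zeta_i(n))c\bigr)_{h\in\mathcal B_{i,j}}$ is $\varepsilon/4$-dense in the torus, and indeed every sufficiently long interval of centres contains a whole sub-interval of good centres. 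Density and equidistribution, not rank, are the engine of the proof; your proposal never invokes them, and the task you defer (``confirming that the dominant matrix really has the required rank'') is not the statement that needs proving.

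There is also a directional error in how you combine the levels $j<l_i$ within one block. The nesting must go from the \emph{least} dominant level to the most dominant one: fix an interval of good centres for the smallest coefficients (of length roughly $\varepsilon$ divided by the largest coefficient of that level); by (vii) the coefficients of the next level up are larger by a factor tending to infinity, so they wrap that interval around $\mathbb T$ arbitrarily many times, and the Kronecker argument can be run again inside it. If you process ``in decreasing order of magnitude'' as written, the interval surviving the dominant level has length about $\varepsilon/M_0$ with $M_0$ the largest coefficients, and the sub-dominant coefficients map it onto arcs whose length tends to $0$, so the remaining targets become unreachable. Finally, note that the statement requires one length $\gamma_n$ that works simultaneously for \emph{all} families $W$ and all admissible $\psi$; this uniformity is available precisely because the density estimates above do not depend on the targets $w_h$, but it is part of what has to be proved, not a free afterthought.
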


    \begin{prop}[Lemma 4.3, \cite{Tomita21}] \label{lema3.3}
    Let $\mathcal{S}$, $\mathcal{A}$, $\mathcal{C}$, $\mathcal{M}$, $\mathcal{N}$ be as in Proposition \ref{lema3.1}. Let $\delta$ be a positive number such that $\varepsilon = \dfrac{\delta}{\displaystyle\sum_{f \in \mathcal{A}, h \in \mathcal{C}} \vert \mathcal{M}_{f,h} \vert } < \frac{1}{2}$. If $U = (U_f)_{f \in \mathcal{A}}$ is a family of open arcs of length $\delta$ and $\Gamma$ be an $\delta -$arc function, with $\lbrace \nu_i \mid i < k_0 \rbrace \subset \supp{\psi}$, such that $\Gamma(\nu_i) = U_{\chi_{ \overrightarrow{\nu_i}}}$, for every $i < k_0$, then exists a family $W = (W_h)_{h \in \mathcal{C}}$ of open arcs of length $\varepsilon$ and an $\varepsilon -$arc function $\psi$, with $\supp{\psi} = \supp{\Gamma} \setminus \lbrace \nu_i \mid i < k_0 \rbrace$, such that, for each $n \in A$, every $n-$solution for $(\psi, A, K. \mathcal{C}, K_n, W)$ is an $n-$solution for $(\Gamma, A, K. \mathcal{A}, K_n, U)$.  
    \end{prop}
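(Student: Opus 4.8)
The plan is to manufacture $W$ and $\psi$ so that each of the two defining conditions of an $n$-solution of $(\Gamma,A,K.\mathcal A,K_n,U)$ --- namely $K_n\theta\le\Gamma$ and $\theta((K.f)(n))\subseteq U_f$ for $f\in\mathcal A$ --- is forced by the corresponding condition of an $n$-solution of $(\psi,A,K.\mathcal C,K_n,W)$. Everything rests on the change-of-basis identities of Proposition \ref{lema3.1}: for every $n\in A$ one has $f_n=\sum_{h\in\mathcal C}\mathcal M_{f,h}h_n$ (part c) and $h_n=\tfrac1{T^2}\sum_{f\in\mathcal A}\mathcal N_{h,f}f_n$ (part d), which, together with the linear independence of $\{[f]_p:f\in\mathcal A\}$, show that $\mathcal M$ is invertible over $\mathbb Q$ with $\mathcal M^{-1}=\tfrac1{T^2}\mathcal N$.

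First I would fix the centres. Writing $c(U_f)$ for the midpoint of $U_f$, I would set $c(W_h):=\tfrac1{T^2}\sum_{f\in\mathcal A}\mathcal N_{h,f}\,c(U_f)$, computed from real lifts and projected back to $\mathbb T$, and let $W_h$ be the open arc of length $\varepsilon$ about $c(W_h)$. Because $\mathcal M^{-1}=\tfrac1{T^2}\mathcal N$, this is designed exactly so that $\sum_{h\in\mathcal C}\mathcal M_{f,h}\,c(W_h)=c(U_f)$ in $\mathbb T$ for every $f\in\mathcal A$. For $\psi$ I would take the concentric $\varepsilon$-shrinking of $\Gamma$ off the $\nu_i$'s: let $\psi(\alpha)$ be the open arc of length $\varepsilon$ with the same midpoint as $\Gamma(\alpha)$ for $\alpha\in\supp{\Gamma}\setminus\{\nu_i:i<k_0\}$, and $\psi(\alpha)=\mathbb T$ otherwise. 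Then $\supp{\psi}=\supp{\Gamma}\setminus\{\nu_i:i<k_0\}$ and, since $\varepsilon<\delta$, $\overline{\psi(\alpha)}\subseteq\Gamma(\alpha)$ on $\supp{\psi}$.

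Next I would check that an arbitrary $n$-solution $\theta$ of $(\psi,A,K.\mathcal C,K_n,W)$ solves $(\Gamma,A,K.\mathcal A,K_n,U)$. For the arc conditions, $\mathbb Z$-linearity of $a\mapsto\theta(a)$ and identity (c) give, for each $f\in\mathcal A$, $\theta((K.f)(n))=\theta\bigl(\sum_h\mathcal M_{f,h}(K.h)(n)\bigr)=\sum_h\mathcal M_{f,h}\,\theta((K.h)(n))\subseteq\sum_h\mathcal M_{f,h}W_h$. Since the running length is $\sum_h|\mathcal M_{f,h}|\varepsilon\le(\sum_{f,h}|\mathcal M_{f,h}|)\varepsilon=\delta<\tfrac12$, the Minkowski combination $\sum_h\mathcal M_{f,h}W_h$ is a genuine arc, concentric with $U_f$ (its midpoint is $\sum_h\mathcal M_{f,h}c(W_h)=c(U_f)$) and of length at most $\delta$, hence contained in $U_f$; this yields $\theta((K.f)(n))\subseteq U_f$. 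For $K_n\theta\le\Gamma$: on $\supp{\psi}$ the hypothesis $K_n\theta\le\psi$ gives $K_n\theta(\alpha)\subseteq\psi(\alpha)$, so $\overline{K_n\theta(\alpha)}\subseteq\overline{\psi(\alpha)}\subseteq\Gamma(\alpha)$; off $\supp{\Gamma}$ the condition is vacuous; and at $\alpha=\nu_i$ I would invoke $\chi_{\vec{\nu_i}}\in\mathcal A$ and $\Gamma(\nu_i)=U_{\chi_{\vec{\nu_i}}}$, so that the arc inclusion just proved for $f=\chi_{\vec{\nu_i}}$ gives $K_n\theta(\nu_i)\subseteq\Gamma(\nu_i)$.

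The delicate point will be exactly this passage at $\alpha=\nu_i$, where $\le\Gamma$ demands not mere inclusion but $\overline{K_n\theta(\nu_i)}\subseteq\Gamma(\nu_i)$ (or equality). Here the normalisation $\varepsilon=\delta/\sum_{f,h}|\mathcal M_{f,h}|$ does the work: for a fixed $f$ the row budget $\sum_h|\mathcal M_{f,h}|\varepsilon$ is \emph{strictly} below the total $\delta$ as soon as some other row of the invertible matrix $\mathcal M$ is nonzero, so $\sum_h\mathcal M_{f,h}W_h$ is a concentric sub-arc of $U_f$ of length strictly less than $\delta$ and its closure remains inside $U_f$. The residual bookkeeping --- legitimacy of the centre computation in $\mathbb T$ (lift to $\mathbb R$, apply $\mathcal M^{-1}=\tfrac1{T^2}\mathcal N$, project back) and the fact that integer Minkowski scaling of a length-$\varepsilon$ arc by $m$ yields an arc of length $|m|\varepsilon$ about $m\cdot c$ while the running length stays below $1$ --- is routine and guaranteed by the same length budget.
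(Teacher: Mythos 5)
The paper itself contains no proof of Proposition \ref{lema3.3}: it is imported verbatim as [Lemma 4.3, \cite{Tomita21}], so your attempt can only be measured against the argument of that reference. Your reconstruction is essentially that argument: the centres of the $W_h$ are obtained by applying $\frac{1}{T^2}\mathcal N$ to lifted centres of the $U_f$ (so that $\sum_{h}\mathcal M_{f,h}\,c(W_h)=c(U_f)$, using $\mathcal M\mathcal N=T^2 I$), $\psi$ is the concentric $\varepsilon$-shrinking of $\Gamma$ off $\lbrace \nu_i \mid i<k_0\rbrace$, and solutions are transferred through identity (c) of Proposition \ref{lema3.1} together with the length budget $\sum_h \vert\mathcal M_{f,h}\vert\,\varepsilon\le\delta$. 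The set-arithmetic facts you rely on ($\theta(ma)=m\,\theta(a)$, $\theta(a+b)\subseteq\theta(a)+\theta(b)$, Minkowski sums of short arcs are arcs whose centres and lengths add) are all correct, as is the derivation of the arc conditions $\theta((K.f)(n))\subseteq U_f$.

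There are two loose ends, one cosmetic and one substantive. Cosmetic: $\mathcal M\mathcal N=T^2I$ requires the sequences $(f(n))_{n\in A}$, $f\in\mathcal A$, to be linearly independent, which follows from the ultrapower hypothesis only because the stack's set $A$ lies in $p$; this is not stated in Proposition \ref{lema3.1} as quoted, but it is part of the cited lemma (and is implicitly assumed later, in Lemma \ref{stacksequences}, when cofinite subsets of $A^{m,k}$ are asserted to belong to $p_k$). Substantive: your verification of $K_n\theta\le\Gamma$ at the coordinates $\nu_i$ is conditional, exactly at the point you call delicate. You obtain $\overline{K_n\theta(\nu_i)}\subseteq\Gamma(\nu_i)$ only when $\sum_h\vert\mathcal M_{\chi_{\vec{\nu_i}},h}\vert < \sum_{f,h}\vert\mathcal M_{f,h}\vert$, i.e.\ only when $\mathcal M$ has a second nonzero row. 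Since $\mathcal M\mathcal N=T^2I$ forces every row to be nonzero, this is equivalent to $\vert\mathcal A\vert\ge 2$. If $\mathcal A$ were the singleton $\lbrace\chi_{\vec{\nu_0}}\rbrace$ (so $k_0=1$, $\mathcal F=\emptyset$), then $\sum_h\mathcal M_{f,h}W_h$ is an arc of length exactly $\delta$, hence equal to $U_{\chi_{\vec{\nu_0}}}=\Gamma(\nu_0)$, and an $n$-solution $\theta$ of the $(\psi,W)$-equation may have $K_n\theta(\nu_0)$ a proper subarc of $\Gamma(\nu_0)$ whose closure meets its endpoints; then $K_n\theta\le\Gamma$ fails and your implication breaks down. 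This configuration does not occur for stacks produced by Proposition \ref{lema3.1} --- there the $\nu_i$ are extracted from the sequences in $\mathcal F$, so $k_0\ge 1$ forces $\mathcal F\ne\emptyset$ and hence $\vert\mathcal A\vert\ge 2$ --- but that fact lives in the cited construction, not in the statement you were given, so it must be invoked explicitly (or the argument restricted to the cases $k_0=0$ or $\vert\mathcal A\vert\ge2$) before the proof is complete.
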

    
    Now we will construct a suitable sequence of arc equations that allows us to define partial homomorphisms.

    \begin{lem} \label{stacksequences}
    Let  $\lbrace p_{n} \mid n \in \omega \rbrace$ be a family of incomparable selective ultrafilters and  $(\mathcal{F}^n)_{n \in \omega}$ be a countable family of $\left( \mathbb{Q}^{(\kappa)} \right)^{\omega}$ such that, for each $n \in \omega$, $\lbrace [f]_{p_{n}} \mid f \in \mathcal{F}^n \rbrace \cup \lbrace [ \chi_{\overrightarrow{\mu}} ]_{p_{n}} \mid \mu \in \kappa \rbrace$ is linearly independent in $Ult_{p_{n}} \left( \mathbb{Q}^{(\kappa)} \right)$. 
    
    Let $d \in \mathbb{Q}^{(\kappa)}$ be non-zero and $C \subset \kappa$ be a countable subset such that $\supp d \cup \omega \cup \displaystyle\bigcup_{n, k \in \omega} \lbrace \supp{f_k} \mid f \in \mathcal{F}^n \rbrace \subset C$. In addition, for any $f \in \displaystyle\bigcup_{n \in \omega} \mathcal{F}^n$, fix some $\xi_f \in C$. 
    
    For every $n \in \omega$, let $(\mathcal{F}^{k,n})_{k \in \omega}$ be a increasing sequence of finite subsets whose union is $\mathcal{F}^n$. Let $(C_n)_{n \in \omega}$ be a increasing sequence of finite subsets whose union is $C$ and $\displaystyle\bigcup_{i,k \leq n+1} \lbrace \supp{f_k} \mid f \in \mathcal{F}^{n+1,i} \rbrace \subset C_n$. Then, there exists
    \begin{itemize}
        \item an increasing sequence of positive integers $(i_n)_{n \in \omega}$,
        \item a function $r : \omega \to \omega$,
        \item an increasing sequence of positive integers $(b_n)_{n \in \omega}$ such that $\lbrace b_k \mid k \in r^{-1}(n) \rbrace \in p_n$, for any $n \in \omega$, 
        \item a sequence of positive integers $(Q_n)_{n \in \omega \cup \lbrace -1 \rbrace}$, with $Q_{-1} = 1$, such that $n! \mid Q_n$ and $Q_{n} \mid Q_{n+1}$, for any $n \in \omega$, 
        \item a sequence of arc functions $(\varrho_{i_n})_{n \in \omega}$ such that $0 \notin \overline{\varrho_{i_0}(d)}$ and, for any $n \in \omega$, $\supp{\varrho_{i_{n}}} \cup C_{i_{n+1}}  \subseteq \supp{\varrho_{i_{n+1}}}$,
    \end{itemize}
    satisfying, for any $0 \leq m' \leq m$, the following   
    \begin{enumerate}
        \item[i)] for every $\xi \in \supp{\varrho_{i_{m+1}}}$, $\frac{Q_{m}}{Q_{m'-1}} \varrho_{i_{m+1}}(\xi) $ has length less or equal than $\frac{1}{2^{i_m}}$ and its contained in $\varrho_{i_{m'}}(\xi)$,
        \item[ii)] for every $f \in \mathcal{F}^{i_m,r_m}$, we have $\frac{Q_{m}}{Q_{m-1}} \varrho_{i_{m+1}}(f_{b_m}) \subseteq \varrho_{i_{m}}(\xi_f)$, 
        \item[iii)] for every $f \in \mathcal{F}^{i_m,r_m}$, we have $\frac{Q_{m}}{Q_{m'-1}} \varrho_{i_{m+1}}(f_{b_m}) \subseteq \varrho_{i_{m'}}(\xi_f)$.
    \end{enumerate}
    \end{lem}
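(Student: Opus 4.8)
The plan is to build all the listed objects simultaneously by recursion on $m\in\omega$, treating each $m$ as a single step that serves one ultrafilter at a time. I would fix from the start a schedule $r:\omega\to\omega$ in which every value occurs infinitely often, so that step $m$ is devoted to the ultrafilter $p_{r_m}$ and to the finite piece $\mathcal{F}^{i_m,r_m}$ of $\mathcal{F}^{r_m}$; since $(i_m)$ will be increasing and each fibre $r^{-1}(n)$ is infinite, the pairs $(i_m,r_m)$ will exhaust $\{(k,n):k,n\in\omega\}$, and hence every $f\in\bigcup_n\mathcal F^n$ will eventually be handled. At each step the current arc function $\varrho_{i_m}$ will be refined to a finer one $\varrho_{i_{m+1}}$ by solving an arc equation, and the integer multiplier of that equation will be recorded as $S_m=Q_m/Q_{m-1}$; the whole telescoping behaviour in (i)--(iii) then comes from composing these one-step refinements.

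For the base case, since $d\neq 0$ I would pick points $t(\xi)\in\mathbb T$, $\xi\in\supp d$, with $\sum_{\xi}d(\xi)t(\xi)\neq 0$ and let $\varrho_{i_0}$ be a $\delta$-arc function centred at these points with $\delta$ small enough that $0\notin\overline{\varrho_{i_0}(d)}$; I would set $Q_{-1}=1$ and choose $Q_0,i_0$ with $0!\mid Q_0$. For the inductive step, given the data through stage $m$, I would apply Proposition~\ref{lema3.1} to the finite set $\mathcal F^{i_m,r_m}$ with respect to $p_{r_m}$ and a set $B\in p_{r_m}$ of admissible indices, obtaining a rational stack together with $\mathcal A,\mathcal C,\mathcal M,\mathcal N$. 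Proposition~\ref{lema3.3} converts the target family $U=(\varrho_{i_m}(\xi_f))_{f}$ and the coarse arc function $\varrho_{i_m}$ into an arc equation over $K.\mathcal C$, and Proposition~\ref{lema3.2} then provides, for cofinally many $n\in A\subseteq B$, an $n$-solution $\psi$ of prescribed (small) length. I would take $\varrho_{i_{m+1}}$ to be such a solution, enlarging $i_{m+1}$ so that its arcs have length $\le 1/2^{i_m}$ and extending $\supp\psi$ freely over the new coordinates so that $\supp\varrho_{i_m}\cup C_{i_{m+1}}\subseteq\supp\varrho_{i_{m+1}}$. The index $n$ used becomes $b_m$, and $Q_m$ is chosen to be the stack integer $K_{b_m}$, a multiple of $Q_{m-1}$ and of $m!$, so that $S_m=Q_m/Q_{m-1}$ is exactly the positive integer of the arc equation.

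With this in hand, conditions (i)--(iii) should fall out by telescoping. The defining inequality $S_m\psi\le\varrho_{i_m}$ of the $n$-solution gives, coordinatewise, $\frac{Q_m}{Q_{m-1}}\varrho_{i_{m+1}}(\xi)\subseteq\varrho_{i_m}(\xi)$; composing this with the inductive hypothesis for the pair $(m',m-1)$ and multiplying through by the intermediate integer ratios yields the containment $\frac{Q_m}{Q_{m'-1}}\varrho_{i_{m+1}}(\xi)\subseteq\varrho_{i_{m'}}(\xi)$ of (i), the length bound being secured by the choice of $i_m$. Similarly, the second requirement $\sum_{\mu}\psi(f(b_m))\subseteq U_f=\varrho_{i_m}(\xi_f)$ of the $n$-solution is precisely (ii), and (iii) is obtained from (ii) by composing with (i) across the levels $m'\le m$. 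The last point is to turn the admissible indices into genuine ultrafilter members: for each $n$ the steps $m\in r^{-1}(n)$ produce indices $b_m$ lying in sets of $p_n$, and I would invoke Proposition~\ref{ufs} (selectivity) to arrange that $\{b_k:k\in r^{-1}(n)\}\in p_n$, while Lemma~\ref{variosuf} is used to make the associated index-intervals for distinct ultrafilters pairwise disjoint, so that a single increasing sequence $(b_m)$ can carry the selections for all the $p_n$ at once without collision.

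The main obstacle is coordinating these three demands in one recursion: each application of Propositions~\ref{lema3.2} and~\ref{lema3.3} only refines one level and serves the single scheduled ultrafilter $p_{r_m}$, yet $\varrho_{i_{m+1}}$ must simultaneously refine all coarser levels $\varrho_{i_{m'}}$ (for every $m'\le m$ and under the integer scalings $Q_m/Q_{m'-1}$), and its chosen node $b_m$ must remain compatible with the selective extraction for \emph{all} the ultrafilters. Arranging the bookkeeping so that the disjoint-interval structure of Lemma~\ref{variosuf} decouples the contributions of different ultrafilters, while the divisibility chain $Q_{m-1}\mid Q_m$ and $m!\mid Q_m$ keeps every multiplier an integer that clears the rational denominators, is the delicate technical heart of the argument; once that scheduling is set up correctly, the verifications of (i)--(iii) reduce to the routine telescoping indicated above.
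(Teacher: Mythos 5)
Your proposal assembles the right ingredients (Propositions \ref{lema3.1}--\ref{lema3.3}, Proposition \ref{ufs}, Lemma \ref{variosuf}, the telescoping of the one-step containments, $Q_m$ as a running product of the stack integers $K^{i_k,r_k}_{b_k}$), but the order in which you deploy them contains a genuine circularity, and it is precisely the point where the paper's proof has a different architecture. You build the rational stacks one at a time, interleaved with the arc-function recursion, choose each $b_m$ greedily as ``the index $n$ used'' at step $m$, and only \emph{at the end} invoke Proposition \ref{ufs} and Lemma \ref{variosuf} to ``arrange'' that $\lbrace b_k \mid k \in r^{-1}(n) \rbrace \in p_n$. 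This cannot work: membership in $p_n$ is a property of the completed set of choices and cannot be imposed post hoc. Proposition \ref{ufs} takes as \emph{input} a countable family of sets belonging to $p_n$ and outputs a selection $(a^n_k)_k$ with $\lbrace a^n_k \mid k\rbrace \in p_n$; that selection must then dictate which indices $b_m$ are used in the recursion, not the other way around. Consequently all the witness sets in $p_k$ --- that is, all the stacks serving $p_k$ --- must exist \emph{before} any index is chosen. This is exactly why the paper's proof runs in two separated phases: first a Claim constructs, by induction on $m$ alone and with no reference to arc functions, the full triangular array of stacks $\mathcal{S}^{m,k}$ ($k<m$) with nested sets $B^{m+1,k}\subseteq A^{m+1,k}\subseteq B^{m,k}$ in $p_k$ and uniform solution lengths $\gamma^{m,k}_n$; then Proposition \ref{ufs} is applied to the decreasing families $(B^{n+k+1,k})_{n}$ and Lemma \ref{variosuf} to the resulting sequences, and only after that does the arc-function recursion run, consuming the precomputed solvability data at the predetermined indices $b_t = a^{r_t}_{i_t}$. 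A related error: you fix the schedule $r$ in advance as an arbitrary surjection with infinite fibres, whereas in the paper $r$ (and $(i_t)$) are \emph{outputs} of Lemma \ref{variosuf} --- $r_t=k$ iff $i_t\in I_k$, and the pairwise disjointness of the intervals $[k,a^j_k]$ is what makes the single sequence $(b_t)$ increasing. With a pre-fixed $r$ there is no way to force the selections produced by \ref{ufs}/\ref{variosuf} to conform to your schedule.

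A second, subtler casualty of the interleaving is the calibration of arc lengths. In the paper, the parameter $\delta_m$ is defined as a minimum involving the data of \emph{all} stacks up to level $m+1$ (including the lengths $\gamma^{i,j}_n$ for small $n$), so that the arc function $\varrho_{i_{m+1}}$ built at step $m$ is guaranteed to be fine enough to serve as the input $\Gamma$ of Proposition \ref{lema3.3} for the \emph{next} scheduled stack. In your scheme the stack to be used at step $m+1$ does not yet exist when $\varrho_{i_{m+1}}$ is constructed, so you cannot know how small its arcs must be: the admissible input length depends on $\sum_{f,h}\vert\mathcal{M}^{i_{m+1},r_{m+1}}_{f,h}\vert$ and on $\gamma^{i_{m+1},r_{m+1}}_{b_{m+1}}$, neither of which is available. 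Both defects are repaired simultaneously by adopting the paper's two-phase structure (all stacks and all $\delta_m$'s first, then the selective extraction, then the arc recursion); the telescoping verification of (i)--(iii) that you describe is then essentially the paper's own and is sound.
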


    \begin{proof}
    We begin establishing the next property.
    \begin{claim}
    Given $m \in \mathbb{N}$ and $k < m$, there exists
        \begin{itemize}
            \item rational stacks $\mathcal{S}^{m,k} = \langle \mathcal{B}^{m,k}, \nu^{m,k}, \zeta^{m,k}, K^{m,k}, A^{m,k}, k^{m,k}_0, k^{m,k}_1, l^{m,k}, T^{m,k}  \rangle$,  
            \item a positive number $\delta_{m-1}$ less or equal than $\frac{1}{2^{m}}$,
            \item $B^{m,k} \in p_k$, with $B^{m,k} \subset A^{m,k} \setminus (m-k)$,
            \item a sequence of positive real numbers $(\gamma^{m,k}_n)_{n \in B^{m,k}}$,
        \end{itemize}
    such that
        \begin{enumerate}
            \item[(*)] If $U^{m,k} = (U_f)_{f \in \: \mathcal{A}^{m,k}}$ is a family of $\delta_{m-1} -$arc functions, where $\mathcal{A}^{m,k} = \mathcal{F}^{m,k} \cup \left\lbrace \chi_{\overrightarrow{\nu^{m,k}_i}} \mid i \in k^{m,k}_0 \right\rbrace$, and $\psi$ is an $\delta_{m-1} -$arc function such that $U_{\chi_{\overrightarrow{\nu^{m,k}_i}}} = \psi \left( \nu^{m,k}_i \right)$, for each $i \in \: k^{m,k}_0$, then, for every $n \in B^{m,k}$, exists an $n-$solution of length $\gamma^{m,k}_n$ for $\left( \psi, B^{m,k}, K^{m,k}.\mathcal{A}^{m,k}, K^{m,k}_n, U^{m,k} \right)$.
        \end{enumerate}
    \end{claim}
\noindent
    \textbf{Proof of Claim.} By Proposition \ref{lema3.1} for $\omega$ and $\mathcal{F}^{1,0}$, there exists $\mathcal{S}^{1,0}$, $\mathcal{A}^{1,0}$, $\mathcal{C}^{1,0}$, $\mathcal{M}^{1,0}$, $\mathcal{N}^{1,0}$. Fix $\delta_0 \in \left( 0, \frac{1}{2} \right)$. Note that we can apply Proposition \ref{lema3.3} to $\varepsilon_0 = \dfrac{\delta_0}{\displaystyle\sum_{f \in \mathcal{A}^{1,0}, h \in \mathcal{C}^{1,0}} \vert \mathcal{M}^{1,0}_{f,h} \vert}$. Set $D_0 = C_0 \cup \left\lbrace \nu^{1,0}_i \mid i \in k^{1,0}_0 \right\rbrace$, so exists $B^{1,0} \subset A^{1,0} \setminus 1$ cofinite in $A^{1,0} \setminus 1$ and $\left( \gamma^{1,0}_n \right)_{n \in B^{1,0}}$ be as in Proposition \ref{lema3.2}. Clearly $(*)$ is true. 
    
    Given $m$, assume that exists $ \lbrace \mathcal{S}^{m,k} \mid k < m \rbrace$, $\delta_{m-1} > 0$, $\lbrace B^{m,k} \mid k < m \rbrace$, $(\gamma^{m,k}_n)_{n \in B^{m,k}}$ such that $(*)$ is satisfied. 
    
    Fix $k < m+1$. If $k < m$, for $B^{m,k} \in p_k$ and $\mathcal{F}^{m+1,k}$, there exists $\mathcal{S}^{m+1,k}$, $\mathcal{A}^{m+1,k}$, $\mathcal{C}^{m+1,k}$, $\mathcal{M}^{m+1,k}$, $\mathcal{N}^{m+1,k}$ be as in Proposition \ref{lema3.1}. If $k = m$, consider $\omega \in p_m$ and $\mathcal{F}^{m+1,m}$, so analogously exists $\mathcal{S}^{m+1,m}$, $\mathcal{A}^{m+1,m}$, $\mathcal{C}^{m+1,m}$, $\mathcal{M}^{m+1,m}$, $\mathcal{N}^{m+1,m}$. 

    Let $\delta_{m} = \frac{2^{-(m+1)}}{ \displaystyle\prod\limits_{i<j \leq m+1, t\leq m+2} K^{i,j}_{t} }. \min \left\lbrace \delta_{m-1}, \gamma^{i,j}_n \mid i < j \leq m+1, n \in B^{i,j} \cap (m+2)  \right\rbrace$. Note that we can apply Proposition \ref{lema3.3} to $\varepsilon^{m+1,k} = \dfrac{\delta_{m}}{\displaystyle\sum_{f \in \mathcal{A}^{m+1,k}, h \in \mathcal{C}^{m+1,k}} \vert \mathcal{M}^{m+1,k}_{f,h} \vert}$. Set $D_{m+1} = D_m \cup C_{m+1} \cup \displaystyle\bigcup_{l \leq m} \left\lbrace \nu^{m+1,l}_i \mid i \in k^{m+1,l}_0  \right\rbrace$. So, for each $k < m$, exists $B^{m+1,k} \subset A^{m+1,k} \setminus (m+1-k)$ cofinite in $A^{m+1,k} \setminus (m+1-k)$ and $(\gamma^{m+1,k}_n)_{n \in B^{m+1,k}}$ as in Proposition \ref{lema3.2}. Analogously, for $\omega$ and $\mathcal{F}^{m+1,m}$, exists $B^{m+1,m} \subset A^{m+1,m} \setminus 1$ cofinite in $A^{m+1,m} \setminus 1$ and $(\gamma^{m+1,m}_n)_{n \in B^{m+1,m}}$. 
    
    Finally, fix $k < m+1$. Let $U^{m+1,k} = (U_f)_{f \in \mathcal{A}^{m+1,k}}$ be a family of $\delta_m -$arc functions and $\psi$ be a $\delta_m-$arc function such that $U_{\chi_{\overrightarrow{\nu^{m+1,k}_i}}} = \psi \left(\nu^{m+1,k}_i \right)$, for each $i \in \: k^{m+1,k}_0$. Then, by Proposition \ref{lema3.3}, we know that, for every $n \in B^{m+1,k}$, exists an $n-$solution of length $\gamma^{m+1,k}_n$ for $\left( \psi, B^{m+1,k}, K^{m+1,k}.\mathcal{A}^{m+1,k}, K^{m+1,k}_n, U^{m+1,k} \right)$. \\
    \qed
    
    For each $k \in \omega$, $\left( B^{n+k+1,k} \right)_{n \in \omega}$ is a decreasing sequence of $p_k$. So, by Proposition \ref{ufs}, exists $(a^k_n)_{n \in \omega}$ such that $a^k_n \in B^{n+k+1,k}$, with $a^k_n > n$, and $\lbrace a^k_n \mid n \in \omega  \rbrace \in p_k$. In virtue of Lemma \ref{variosuf}, exists a disjoint family $\lbrace I_j \mid j \in \omega \rbrace$ of subsets of $\omega$ such that 
        \begin{itemize}
            \item $\lbrace a^k_n \mid n \in I_k  \rbrace \in p_k$,
            \item $\lbrace [n,a^k_n] \mid n \in I_k, k \in \omega \rbrace$ is a disjoint family.
        \end{itemize}
    Let $\lbrace i_t \mid t \in \omega \rbrace$ be an increasing enumeration of $\displaystyle\bigcup_{j \in \omega} I_j$. Define $r : \omega \to \omega$ by $r_n = k$ if, and only if, $i_n \in I_k$. Now, set $b_t = a^{r_t}_{i_t}$, for every $t \in \omega$. Note that $(b_t)_{t \in \omega}$ is increasing and $\lbrace b_k \mid k \in r^{-1}(j) \rbrace \in p_j$, for every $j \in \omega$. 
    
    Inductively we can construct $\phi_{i_m}$, $\varrho_{i_m}$ an $\delta_{i_m-1} -$arc function and $U^{i_m, r_m}$ such that, for each $0 \leq m' \leq m$, satisfying
        \begin{enumerate}
            \item[a)] $\supp{\varrho_{i_m}} \subset \supp{\varrho_{i_{m+1}}}$ and $C_{i_{m}} \subset \supp{\varrho_{i_{m}}}$,
            \item[b)] $U^{i_m,r_m} = (U_f)_{f \in \: \mathcal{A}^{i_m,r_m}}$, where $U_f = \varrho_{i_m}(\xi_f)$, if $f \in \mathcal{F}^{i_m,r_m}$, and $U_f = \varrho_{i_m}(\nu^{i_m,r_m}_i)$ if $f = \chi_{\overrightarrow{\nu^{i_m,r_m}_i}}$, for every $i < k^{i_m,r_m}_0$,
            \item[c)] $\phi_{i_m}$ is an $b_{m} -$solution of length $\gamma^{i_m,r_m}_{b_m}$ for the arc equation $$\left( \varrho_{i_{m}}, B^{i_m,r_m}, K^{i_m,r_m}.\mathcal{A}^{i_m,r_m}, K^{i_m,r_m}_{b_m}, U^{i_m,r_m} \right),$$
            \item[d)] $\varrho_{i_{m+1}} \leq \phi_{i_m}$,
            \item[e)] for each $\xi \in \supp{\varrho_{i_{m+1}}}$, $\displaystyle\prod\limits_{k=m'}^{m} K^{i_{k}, r_k}_{b_{k}} \varrho_{i_{m+1}}(\xi)$ has length less or equal than $\frac{1}{2^{i_m}}$ and its contained in $\varrho_{i_{m'}}(\xi)$,
            \item[f)] for each $f \in \mathcal{F}^{i_m,r_m}$, we have $K^{i_{m}, r_m}_{b_{m}} \varrho_{i_{m+1}}(f_{b_m}) \subseteq \varrho_{i_{m}}(\xi_f)$, 
            \item[g)] for each $f \in \mathcal{F}^{i_m,r_m}$, we have $\displaystyle\prod\limits_{k=m'}^{m} K^{i_{k}, r_k}_{b_{k}} \varrho_{i_{m+1}}(f_{b_m}) \subseteq \varrho_{i_{m'}}(\xi_f)$.
        \end{enumerate}
    In fact, let $\varrho_{i_0}$ be an $\delta_{i_o} -$arc function such that $C_{i_o} \cup \lbrace \nu^{i_0,r_0}_{i} \mid i < k^{i_0,r_0}_{0} \rbrace \subseteq \supp{\varrho_{i_o}}$ and $0 \notin \overline{\varrho_{i_0}(d)}$. Define $U^{i_0,r_0} = (U_f)_{f \in \mathcal{A}^{i_{0},r_{0}}}$ by $U_f = \varrho_{i_0}(\xi_f)$, for each $f \in \mathcal{F}^{i_{0},r_{0}}$, and $U_{\chi_{\overrightarrow{\nu^{i_0,r_0}_{i} }}} = \varrho_{i_0}(\nu^{i_0,r_0}_{i})$, for each $i \in k^{i_0,r_0}_{0}$. By $(*)$, $\left( \varrho_{i_0}, B^{i_{0},r_{0}}, K^{i_0,r_0}.\mathcal{A}^{i_0, r_0}, K^{i_{0},r_{0}}_{b_{0}}, U^{i_0,r_0} \right)$ have an $b_{0}-$solution $\phi_{i_0}$ of length $\gamma^{i_{0}, r_0}_{b_{0}}$. Thereby, $K^{i_{0},r_{0}}_{b_{0}}. \phi_{i_0}(f_{b_0}) \subseteq \varrho_{i_0}(\xi_f)$, for every $f \in \mathcal{F}^{i_{0},r_{0}}$.  
    
    Assume that we have $\phi_{i_m}$, $\varrho_{i_m}$ and $U^{i_m, r_m}$. Let $\varrho_{i_{m+1}}$ be an $\delta_{i_{m}} -$arc function such that $\supp{\varrho_{i_{m}}} \cup C_{i_{m+1}} \cup \lbrace \nu^{i_{m+1},r_{m+1}}_{i} \mid i < k^{i_{m+1},r_{m+1}}_{0} \rbrace \subseteq \supp{\varrho_{i_{m+1}}}$ and $\varrho_{i_{m+1}} \leq \phi_{i_m}$. So, we have $(a)$ and $(d)$.
    
    Consequently, note that $K^{i_{m},r_{m}}_{b_{m}}. \varrho_{i_{m+1}}(\xi)  \subseteq \varrho_{i_m}(\xi)$, for every $\xi \in \supp{\varrho_{i_{m+1}}}$, and $K^{i_{m},r_{m}}_{b_{m}}. \varrho_{i_{m+1}}(f_{b_{m}}) \subseteq \varrho_{i_{m}}(\xi_f)$, for every $f \in \mathcal{F}^{i_{m},r_{m}}$. We already have $(f)$. 
    
    In order to obtain $(e)$, the second statement follows from $(c)$ and $(d)$ and then use $(e)$ iteratively. As for the first, follows from the definition of $\delta_{i_m}$. Item $(g)$ follows from $(e)$ and $(f)$.
    
    Now, we define $U^{i_{m+1},r_{m+1}} = (U_f)_{f \in \mathcal{A}^{i_{m+1},r_{m+1}}}$ by $U_f = \varrho_{i_{m+1}}(\xi_f)$, for every $f \in \mathcal{F}^{i_{m+1},r_{m+1}}$, and $U_{\chi_{\overrightarrow{\nu^{i_{m+1},r_{m+1}}_{i} }}} = \varrho_{i_{m+1}}(\nu^{i_{m+1},r_{m+1}}_{i})$, for every $i \in k^{i_{m+1},r_{m+1}}_{0}$. 
    
    By $(*)$, $\left( \varrho_{i_{m+1}}, B^{i_{m+1},r_{m+1}}, K^{i_{m+1},r_{m+1}}.\mathcal{A}^{i_{m+1}, r_{m+1}}, K^{i_{m+1},r_{m+1}}_{b_{m+1}}, U^{i_{m+1},r_{m+1}} \right)$ have an $b_{m+1}-$solution $\phi_{i_{m+1}}$ of length $\gamma^{i_{m+1}, r_{m+1}}_{b_{m+1}}$. With this, we obtain $(b)$ and $(c)$, and therefore the recursion is complete. 
    
    Finally, we put $Q_m = \displaystyle\prod\limits_{k=0}^{m} K^{i_{k}, r_k}_{b_{k}}$, for each $n \in \omega$, and $Q_{-1} = 1$. Hence, items $(i)$, $(ii)$ and $(iii)$ are clearly satisfied.
    \end{proof}

    \begin{lem} \label{homoenume}
    Let  $\lbrace p_{n} \mid n \in \omega \rbrace$ be a family of incomparable selective ultrafilters and  $(\mathcal{F}^n)_{n \in \omega}$ be a countable family of $\left( \mathbb{Q}^{(\kappa)} \right)^{\omega}$ such that, for each $n \in \omega$, $\lbrace [f]_{p_{n}} \mid f \in \mathcal{F}^n \rbrace \cup \lbrace [ \chi_{\overrightarrow{\mu}} ]_{p_{n}} \mid \mu \in \kappa \rbrace$ is linearly independent in $Ult_{p_n} \left( \mathbb{Q}^{(\kappa)} \right)$. 
    
    Let $d \in \mathbb{Q}^{(\kappa)}$ be non-zero and $C \subset \kappa$ be countable subset such that $\supp d \cup \omega \cup \displaystyle\bigcup_{n, k \in \omega} \lbrace \supp{f_k} \mid f \in \mathcal{F}^n \rbrace \subset C$. In addition, for any $f \in \displaystyle\bigcup_{n \in \omega} \mathcal{F}^n$, fix some $\xi_f \in C$. 
    
    Then, there exists a homomorphism $\phi : \mathbb{Q}^{(C)} \rightarrow \mathbb{T}$ such that
        \begin{enumerate}
            \item[a)] $\phi(d) \neq 0$,
            \item[b)] $p_{n}- \lim\limits_{k \to \infty} \phi \left( \frac{1}{N} f_k \right) = \phi \left( \frac{1}{N} \chi_{\xi_f} \right) $, for any $N \in \mathbb{N}$, $f \in \mathcal{F}^n$ and $n \in \omega$.
        \end{enumerate}
    \end{lem}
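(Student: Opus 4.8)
The plan is to use Lemma~\ref{stacksequences} to manufacture $\phi$ coordinate by coordinate from the nested family of arc functions $(\varrho_{i_n})_{n\in\omega}$, and then to read (a) and (b) off the containment properties (i)--(iii). First I would fix the data $(i_n)$, $r$, $(b_n)$, $(Q_n)$, $(\varrho_{i_n})$ produced by the lemma. Since $s!\mid Q_s$ and $Q_s\mid Q_{s+1}$, the elements $\tfrac1{Q_s}\chi_\xi$ ($s\in\omega$, $\xi\in C$) generate $\mathbb Q^{(C)}$, the only relations being $\tfrac{Q_{s+1}}{Q_s}\cdot\tfrac1{Q_{s+1}}\chi_\xi=\tfrac1{Q_s}\chi_\xi$; so a homomorphism is exactly a choice, for each $\xi\in C$, of a compatible sequence $(w_\xi^{(s)})_s$ in $\mathbb T$ with $\tfrac{Q_{s+1}}{Q_s}w_\xi^{(s+1)}=w_\xi^{(s)}$. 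I would set $w_\xi^{(s)}$ to be the unique point of $\bigcap_{m\ge s+1}\overline{\tfrac{Q_m}{Q_s}\varrho_{i_{m+1}}(\xi)}$: by item~(i) these closed arcs are nested with length at most $2^{-i_m}\to0$, so compactness of $\mathbb T$ makes the intersection a single point, and item~(i) also yields the compatibility relation. Thus the $w_\xi^{(s)}$ assemble into a homomorphism $\phi\colon\mathbb Q^{(C)}\to\mathbb T$ with $\phi(\tfrac1{Q_s}\chi_\xi)=w_\xi^{(s)}$. Part (a) is then routine: taking $m=s+1$ shows $w_\xi^{(s)}\in\overline{\varrho_{i_{s+1}}(\xi)}$, so after clearing the denominator of $d$ at some level and pushing down via item~(i) one gets $\phi(d)\in\overline{\varrho_{i_0}(d)}$, which avoids $0$ by the clause $0\notin\overline{\varrho_{i_0}(d)}$ of Lemma~\ref{stacksequences}.

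For part (b) I would make two reductions. Since multiplication by a fixed integer is continuous on $\mathbb T$ and preserves $p$-limits, and since $\phi(\tfrac1N\,\cdot)=\tfrac{Q_s}{N}\,\phi(\tfrac1{Q_s}\,\cdot)$ whenever $N\mid Q_s$, it suffices to treat $N=Q_s$ for each fixed $s$. Next, because $B:=\{b_m\mid m\in r^{-1}(n)\}\in p_n$ and $(b_m)$ is increasing, the $p_n$-limit of $(\phi(\tfrac1{Q_s}f_k))_k$ equals the ordinary limit of the subsequence $\phi(\tfrac1{Q_s}f_{b_m})$ as $m\to\infty$ through $r^{-1}(n)$, whenever that subsequence converges: any neighbourhood of the subsequential limit captures a cofinite-in-$B$, hence $p_n$-large, set of indices.

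The crux is to show $\phi(\tfrac1{Q_s}f_{b_m})\to\phi(\tfrac1{Q_s}\chi_{\xi_f})$ along $m\in r^{-1}(n)$. For $m$ large we have $f\in\mathcal F^{i_m,r_m}$ and $Q_s\mid Q_{m-1}$, and $Q_m f_{b_m}$ is integral (the $K$-factors making up $Q_m$ clear its denominators), so $\phi(\tfrac1{Q_s}f_{b_m})\in\overline{\tfrac{Q_m}{Q_s}\varrho_{i_{m+1}}(f_{b_m})}$ while $\phi(\tfrac1{Q_s}\chi_{\xi_f})\in\overline{\tfrac{Q_m}{Q_s}\varrho_{i_{m+1}}(\xi_f)}$. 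The decisive move is to compare these two fine arcs at the highest available target level rather than at level $s$: item~(ii) gives $\tfrac{Q_m}{Q_{m-1}}\varrho_{i_{m+1}}(f_{b_m})\subseteq\varrho_{i_m}(\xi_f)$ and item~(i) gives $\tfrac{Q_m}{Q_{m-1}}\varrho_{i_{m+1}}(\xi_f)\subseteq\varrho_{i_m}(\xi_f)$; multiplying both inclusions by the integer $\tfrac{Q_{m-1}}{Q_s}$ places both points in $\overline{\tfrac{Q_{m-1}}{Q_s}\varrho_{i_m}(\xi_f)}$, an arc of length at most $2^{-i_{m-1}}$ by item~(i). Hence $d(\phi(\tfrac1{Q_s}f_{b_m}),\phi(\tfrac1{Q_s}\chi_{\xi_f}))\le 2^{-i_{m-1}}\to0$, which finishes (b).

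I expect this last step to be the main obstacle. One must resist comparing at the fixed denominator level $s$ — which by item~(iii) only traps both points in the fixed-length arc $\varrho_{i_{s+1}}(\xi_f)$, giving mere closeness and not convergence — and instead compare at level $m$ and rescale down by $\tfrac{Q_{m-1}}{Q_s}$, exploiting that item~(i) keeps $\tfrac{Q_{m-1}}{Q_s}\varrho_{i_m}(\xi_f)$ short even though multiplication in $\mathbb T$ is expanding. The accompanying bookkeeping — clearing the rational denominators of $f_{b_m}$ through the $K$-factors hidden in $Q_m$, and checking that the displayed inclusions survive the integer rescalings without the arcs wrapping around $\mathbb T$ — is precisely where the quantitative choices of the $\delta_m$ in Lemma~\ref{stacksequences} are consumed.
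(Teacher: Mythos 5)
Your proposal is correct and takes essentially the same route as the paper's own proof: both define $\phi\left(\frac{1}{Q_s}\chi_\xi\right)$ as the unique point of the nested intersection of arcs coming from Lemma~\ref{stacksequences}, extend by the compatibility relations to a homomorphism on $\mathbb{Q}^{(C)}$, and prove (b) by trapping $\phi\left(\frac{1}{N}f_{b_m}\right)$ and $\phi\left(\frac{1}{N}\chi_{\xi_f}\right)$ inside the shrinking arc $\frac{Q_{m-1}}{N}\varrho_{i_m}(\xi_f)$ via items (i)--(ii), concluding with $\lbrace b_m \mid m \in r^{-1}(n)\rbrace \in p_n$. The only cosmetic differences are your explicit reduction to denominators $N=Q_s$ and your use of closed arcs, neither of which changes the argument.
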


    \begin{proof}
    Consider $(i_n)_{n \in \omega}$, $r : \omega \to \omega$, $(b_n)_{n \in \omega}$, $(Q_{n})_{n \geq -1}$ and $(\varrho_{i_n})_{n \in \omega}$ as in Lemma \ref{stacksequences}. Given $\alpha \in C$ and $N \in \mathbb{N}$, we will define $ \phi \left( \frac{1}{N} \chi_{\alpha} \right)$ in the following way. 
    
    For every $m \in \omega$ with $\alpha \in \supp{\varrho_{i_m}}$, we set $\phi \left( \frac{1}{Q_m} \chi_{\alpha} \right)$ as the unique element of $\bigcap\limits_{k \geq m} \frac{Q_k}{Q_m}. \varrho_{i_{k+1}}(\alpha)$. Furthermore, for any $m < m'$, we have $ \phi \left( \frac{1}{Q_m} \chi_{\alpha} \right) = \frac{Q_{m'}}{Q_m} \phi \left( \frac{1}{Q_{m'}} \chi_{\alpha} \right)$. 
    
    Now, take some $n_o \in \omega$ such that $\alpha \in \supp{\varrho_{i_{n_0}}}$ and $N \mid Q_{n_0}$, and we define $\phi \left( \frac{1}{N} \chi_{\alpha} \right) = \frac{Q_{n_0}}{N} \phi \left( \frac{1}{Q_{n_0}} \chi_{\alpha} \right)$. Note that $\phi \left( \frac{1}{N} \chi_{\alpha} \right)$ is well-defined because does not depend of $n_0$ and $\phi$ can be extended to a homomorphism. 
    
    For any $n \in \omega$, $f \in \mathcal{F}^n$ and $N \in \mathbb{N}$, we claim that $\left( \phi \left( \frac{1}{N} f_{b_m} \right) \right)_{m \in r^{-1}(n)}$ converges to $\phi \left( \frac{1}{N} \chi_{\xi_f} \right)$. In fact, we fix some $m_0 \in \omega$ such that $f \in \mathcal{F}^{i_{m_0},n}$, $\xi_f \in \supp{\varrho_{i_{m_0}}}$ and $N \mid Q_{m_0-1}$. Then, for every $m \in r^{-1}(n)$ with $m \geq m_0$, we have
        \begin{eqnarray*}
            \phi \left( \frac{1}{N} f_{b_m} \right)  & = & \phi \left( \frac{1}{N} \displaystyle\sum\limits_{\mu \in \supp{f_{b_m}}} f_{b_m}(\mu). \chi_{\mu} \right), \\
            & = & \phi \left( \displaystyle\sum\limits_{\mu \in \supp{f_{b_m}}} \frac{f_{b_m}(\mu)}{N}. \chi_{\mu} \right), \\
            & = & \displaystyle\sum\limits_{\mu \in \supp{f_{b_m}}} \phi \left( \frac{f_{b_m}(\mu)}{N}. \chi_{\mu} \right), \\
            & = & \displaystyle\sum\limits_{\mu \in \supp{f_{b_m}}} \frac{f_{b_m}(\mu)}{N}. Q_m. \phi \left( \frac{1}{Q_m}. \chi_{\mu} \right), \\
            & = & \frac{Q_{m-1}}{N}. K^{i_m,r_m}_{b_m} \displaystyle\sum\limits_{\mu \in \supp{f_{b_m}}} f_{b_m}(\mu) \phi \left( \frac{1}{Q_m}. \chi_{\mu} \right), \\ 
            & \in & \frac{Q_{m-1}}{N}. K^{i_m,r_m}_{b_m} \varrho_{i_{m+1}}(f_{b_m}) \subseteq \frac{1}{N}.Q_{m-1}\varrho_{i_{m}}(\xi_f).
        \end{eqnarray*}
    Clearly $\frac{1}{N}.Q_{m-1}\varrho_{i_{m}}(\xi_f)$ is a neighbourhood of $\phi \left( \frac{1}{N} \chi_{\xi_f} \right)$ and has length less or equal than $\frac{1}{2^{i_{m}}}$. Since $\lbrace b_{k} \mid k \in r^{-1}(n) \rbrace \in p_n$, we conclude that $p_{n}- \lim\limits_{m \to \infty} \phi \left( \frac{1}{N} f_m \right) = \phi \left( \frac{1}{N} \chi_{\xi_f} \right) $. 
    \end{proof}

    Ultimately, we extend the homomorphism above to $\mathbb{Q}^{(\kappa)}$ with respect to $\kappa$ incomparable selective ultrafilters.

    \begin{teo} \label{homo}
    Let $\lbrace p_{\xi} \mid \omega \leq \xi < \kappa \rbrace$ be a family of incomparable selective ultrafilters and $\lbrace f_{\xi} \mid \omega \leq \xi < \kappa \rbrace$ a family of injective sequences of $\mathbb{Q}^{(\kappa)}$ such that $\displaystyle\bigcup_{n \in \omega} \supp{f_{\xi}(n)} \subset  \xi$. In addition, consider a family $\lbrace A_{\xi} \mid \omega \leq \xi < \kappa \rbrace$ of disjoint subsets of $\kappa \setminus \omega$ such that $\lbrace [f_{\beta}]_{p_{\xi}} \mid \beta \in A_{\xi} \rbrace \cup \lbrace [ \chi_{\overrightarrow{\mu}} ]_{p_{\xi}} \mid \mu \in \kappa \rbrace$ is linearly independent in $Ult_{p_{\xi}} \left( \mathbb{Q}^{(\kappa)} \right)$, for every $\xi \in [\omega, \kappa)$. Then, given $d \in \mathbb{Q}^{(\kappa)}$ non-zero, there exists a homomorphism $\varphi_d : \mathbb{Q}^{(\kappa)} \to \mathbb{T}$ such that 
        \begin{enumerate}
            \item[a)] $\varphi_d(d) \neq 0$,
            \item[b)] $p_{\xi}- \lim\limits_{n \to \infty} \varphi_d \left( \frac{1}{N} f_{\beta}(n) \right) = \varphi_d \left( \frac{1}{N} \chi_{\beta} \right) $, for each $N \in \mathbb{N}$, $\beta \in A_{\xi}$ and $\xi \in [\omega, \kappa)$.
        \end{enumerate}
    Furthermore, let $\alpha < \kappa$ be an infinite cardinal and suppose $\lbrace A_{\xi} \mid \omega \leq \xi < \kappa \rbrace \subset \kappa \setminus \alpha$. For every $A \subset \alpha$, exists a homomorphism $\psi_A : \mathbb{Q}^{(\kappa)} \to \mathbb{T}$ such that 
        \begin{enumerate}
            \item[c)] $\left\lbrace \beta \in \alpha \mid \psi_A \left(  \chi_{\beta} \right) = [ \pi ] \right\rbrace = A$ and $\psi_A \mid_{\mathbb{Q}^{(\alpha \setminus A)}} = 0$,
            \item[d)] $p_{\xi}- \lim\limits_{n \to \infty} \psi_A \left( \frac{1}{N}  f_{\beta}(n) \right) = \psi_A \left( \frac{1}{N} \chi_{\beta} \right)$, for each $N \in \mathbb{N}$, $\beta \in A_{\xi}$ and $\xi \in [\omega, \kappa)$.
        \end{enumerate} 
    \end{teo}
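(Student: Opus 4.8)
The plan is to reduce the uncountable problem to the countable case already settled in Lemma~\ref{homoenume}, exploiting the structural hypothesis $\bigcup_{n\in\omega}\supp{f_\beta(n)}\subseteq\beta$. This descending-support condition lets us build $\varphi_d$ by a recursion along the coordinates of $\mathbb{Q}^{(\kappa)}$ in increasing order: when we reach a coordinate $\beta$, every value of $f_\beta$ has already been assigned. The only genuinely delicate requirement is (a), that $\varphi_d(d)\neq 0$, because those coordinates in $\supp{d}$ that happen to lie in some $A_\xi$ will have their values \emph{forced} (as $p_\xi$-limits) rather than chosen freely; this is precisely what the arc-function machinery of Lemmas~\ref{stacksequences} and~\ref{homoenume} was designed to control. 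So I would first isolate $d$ inside a countable, support-closed piece and secure (a) there, and then extend to all of $\mathbb{Q}^{(\kappa)}$ by taking $p_\xi$-limits.

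Concretely, let $C_0\subseteq\kappa$ be the closure of $\supp{d}\cup\omega$ under the map $\beta\mapsto\bigcup_{n\in\omega}\supp{f_\beta(n)}$, applied to those $\beta$ lying in some $A_\xi$. Since each such union is countable and supports descend, $C_0$ is countable and support-closed. As the $A_\xi$ are pairwise disjoint, only countably many of them meet $C_0$; enumerate the corresponding ultrafilters as $\{p_{\xi_m}\mid m\in\omega\}$ and put $\mathcal F^m=\{f_\beta\mid\beta\in A_{\xi_m}\cap C_0\}$, choosing $\xi_{f_\beta}:=\beta\in C_0$. The linear-independence hypothesis of Lemma~\ref{homoenume} for each $p_{\xi_m}$ is a restriction of the one assumed here, so Lemma~\ref{homoenume} applies and yields a homomorphism $\phi_0:\mathbb{Q}^{(C_0)}\to\mathbb{T}$ with $\phi_0(d)\neq 0$ and, for every $\beta\in C_0\cap A_\xi$ and $N\in\mathbb{N}$,
\[
p_\xi\text{-}\lim_{n\to\infty}\phi_0\!\left(\tfrac1N f_\beta(n)\right)=\phi_0\!\left(\tfrac1N\chi_\beta\right).
\]

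I would then define $\varphi_d$ on $\mathbb{Q}^{(\kappa)}=\bigoplus_{\gamma<\kappa}\mathbb{Q}\chi_\gamma$ by recursion on $\gamma$: if $\gamma\in C_0$ keep the value $\phi_0(\chi_\gamma)$; if $\gamma\in A_\xi\setminus C_0$ set $\varphi_d(\tfrac1N\chi_\gamma):=p_\xi\text{-}\lim_{n\to\infty}\varphi_d(\tfrac1N f_\gamma(n))$, which is meaningful because $\supp{f_\gamma(n)}\subseteq\gamma$ has already been treated and which exists by compactness of $\mathbb{T}$; otherwise set $\varphi_d(\chi_\gamma):=0$. The key verification is that the forced prescription really extends to a homomorphism on the new summand $\mathbb{Q}\chi_\gamma$: writing $v_N=p_\xi\text{-}\lim_n\varphi_d(\tfrac1N f_\gamma(n))$, the identity $M v_{NM}=p_\xi\text{-}\lim_n M\varphi_d(\tfrac1{NM}f_\gamma(n))=v_N$ (integer multiplication is continuous on $\mathbb{T}$ and commutes with $p$-limits) shows that $(v_N)_N$ is a coherent divisible system, hence a homomorphism $\mathbb{Q}\chi_\gamma\to\mathbb{T}$. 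As the coordinates are independent, these local assignments glue to a homomorphism $\varphi_d$; it restricts to $\phi_0$ on $\mathbb{Q}^{(C_0)}$, so $\varphi_d(d)=\phi_0(d)\neq 0$ gives (a), while (b) holds by construction outside $C_0$ and by the displayed property of $\phi_0$ inside $C_0$, noting that $\varphi_d$ and $\phi_0$ agree on $\supp{f_\beta(n)}\subseteq C_0$.

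For the ``Furthermore'' part the construction is the same recursion, but now all $A_\xi\subseteq\kappa\setminus\alpha$, so every coordinate below $\alpha$ is free. I would prescribe $\psi_A(\tfrac1N\chi_\beta):=[\tfrac{\pi}{N}]$ for $\beta\in A$ (a coherent system with $\psi_A(\chi_\beta)=[\pi]$) and $\psi_A(\tfrac1N\chi_\beta):=0$ for $\beta\in\alpha\setminus A$, which forces $\{\beta\in\alpha\mid\psi_A(\chi_\beta)=[\pi]\}=A$ and $\psi_A\mid_{\mathbb{Q}^{(\alpha\setminus A)}}=0$, giving (c); for $\beta\in A_\xi$ and for the remaining free coordinates above $\alpha$ I would again use forced $p_\xi$-limits and $0$ respectively, so that (d) holds exactly as (b) did. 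Here there is no condition of the form $\varphi_d(d)\neq 0$ to protect, so the delicate arc machinery is not needed and compactness of $\mathbb{T}$ alone suffices. The main obstacle throughout is therefore condition (a): recognising that $d$ can be quarantined in a countable support-closed set on which Lemma~\ref{homoenume} supplies the one nontrivial ingredient, after which the passage to $\kappa$ is a routine coherent-limit extension.
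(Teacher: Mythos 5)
Your proposal is correct and follows essentially the same route as the paper: isolate $d$ in a countable support-closed set where Lemma~\ref{homoenume} yields $\phi_0$ with $\phi_0(d)\neq 0$, then extend coordinate-by-coordinate in increasing order, forcing the value at each $\gamma\in A_\xi$ as the $p_\xi$-limit of the already-defined $\varphi_d\left(\frac{1}{N}f_\gamma(n)\right)$ (legitimate since $\supp f_\gamma(n)\subseteq\gamma$) and setting free coordinates to $0$, exactly as in the paper's transfinite chain $\phi_\mu$ on $\mathbb{Q}^{(C_\mu)}$. Your explicit verification of the coherence $Mv_{NM}=v_N$ is a detail the paper leaves implicit, but it is the same construction.
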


    \begin{proof}
    Let $d \in \mathbb{Q}^{(\kappa)}$ non-zero and take a countably subset $C$ of $\kappa$ such that $\omega \cup \supp d \cup \displaystyle\bigcup \lbrace \supp{f_{\theta}(n)} \mid \theta \in C, n \in \omega \rbrace \subset C$. Fix an increasing enumeration $\lbrace \lambda_{\theta} \mid \theta < \kappa \rbrace$ of $\kappa \setminus C$.

    Given $\mu < \kappa$, set $C_{\mu} = C \cup \lbrace \lambda_{\theta} \mid \theta < \mu  \rbrace$ and $I_{\mu} = \lbrace \xi \in \kappa \setminus \omega \mid A_{\xi} \cap C_{\mu} \neq \emptyset \rbrace$. Notice that $C_0 = C$, $I_0$ is countable, $C_{\kappa} =  \kappa$ and $I_{\kappa} = [\omega, \kappa)$. Furthermore, $\displaystyle\bigcup_{n \in \omega} \supp{f_{\theta}(n)} \subset C_{\mu}$, for each $\theta \in A_{\xi} \cap C_{\mu}$ and $\xi \in I_{\mu}$.

    For every $\mu < \kappa$, we will construct inductively a homomorphism $\phi_{\mu} : \mathbb{Q}^{(C_{\mu})} \rightarrow \mathbb{T}$ such that
    \begin{enumerate}
        \item[i)] $\phi_{\mu}(d) \neq 0$,
        \item[ii)] $\phi_{\beta} \subseteq \phi_{\mu}$, if $\beta \leq \mu$,
        \item[iii)] $p_{\xi}- \lim\limits_{n \to \infty} \phi_{\mu} \left( \frac{1}{N} f_{\delta}(n) \right) = \phi_{\mu} \left( \frac{1}{N} \chi_{\delta} \right)$, for each $N \in \mathbb{N}$, $\delta \in A_{\xi} \cap C_{\mu}$ and $\xi \in I_{\mu}$.
    \end{enumerate}  

    In fact, $\phi_0$ is obtained by Lemma \ref{homoenume}. Given $\mu < \kappa$, suppose we have $\phi_{\theta}$, for every $\theta < \mu$. In case $\mu$ is not an ordinal limit, exists $\nu < \kappa$ such that $\mu = \nu +1$. We have $\mathbb{Q}^{(C_{\mu})} = \mathbb{Q}^{(C_{\nu})} \oplus \langle \lbrace \chi_{\lambda_{\nu}} \rbrace \rangle$. If $\lambda_{\nu} \notin \displaystyle\bigcup_{\xi \in \kappa \setminus \omega} A_{\xi}$, we define $h = 0$. Other case, exists $\xi \in \kappa \setminus \omega$ such that $\lambda_{\nu} \in A_{\xi}$. We define $h \left( \frac{1}{N} \chi_{\lambda_{\nu}} \right) = p_{\xi} - \lim\limits_{n \to \infty} \phi_{\nu} \left( \frac{1}{N} f_{\lambda_{\nu}}(n) \right)$, for every $N \in \mathbb{N}$. In that way, set $\phi_{\nu+1} = \phi_{\nu} \oplus h$, which satisfies the desire conditions.  

    In case $\mu$ be an ordinal limit, notice that $C_{\mu} = \displaystyle\bigcup_{\theta < \mu} C_{\theta}$ and, as a consequence, $I_{\mu} = \displaystyle\bigcup_{\theta < \mu} I_{\theta}$ and $\mathbb{Q}^{(C_{\mu})} = \displaystyle\bigcup_{\theta < \mu} \mathbb{Q}^{(C_{\theta})}$. We define $\phi_{\mu} =  \displaystyle\bigcup_{\theta < \mu} \phi_{\theta}$, which satisfies the desire conditions. Thus, $\varphi_d = \phi_{\kappa}$ is the homomorphism that satisfies $(a)$ and $(b)$. 

    Finally, let $\alpha < \kappa$ be an infinite cardinal and suppose $\lbrace A_{\xi} \mid \omega \leq \xi < \kappa \rbrace \subset \kappa \setminus \alpha$. Given $A \subset \alpha$, define $\psi_A : \mathbb{Q}^{(\alpha)} \to \mathbb{T}$ such that $\psi_A \left( \frac{1}{N} \chi_{\beta} \right) = \big[ \frac{1}{N} \pi \big]$, if $\beta \in A$ and $N \in \mathbb{N}$, and $\psi_A \left( \frac{1}{N} \chi_{\beta} \right) = 0$, if $\beta \in \alpha \setminus A$ and $N \in \mathbb{N}$. Analogously, we can extend $\psi_A$ to $\mathbb{Q}^{(\kappa)}$ such that conditions $(c)$ and $(d)$ are satisfied.
    \end{proof}

\section{Torsion-free topological groups with respect to Comfort's question}

    Our goal in this section is to show that the existence of $2^{\mathfrak{c}}$ incomparable selective ultrafilters and a cardinal arithmetic implies that, for every infinite cardinal $\alpha \leq 2^{\mathfrak c}$, there exists a torsion-free Abelian topological group as in the Comfort's question. To get this, we will adapt the ideas developed in \cite{Tomita05} making use of the homomorphisms obtained by Theorem \ref{homo}.

    For now, we will establish the next result that connects ultrapowers on $p$ to $p-$compactness.

    \begin{prop} \label{plim}
    Let $p$ be a selective ultrafilter, $G$ a topological group, and $\varphi : \mathbb{Q}^{(\kappa)} \to G$ a group homomorphism. Let $M$ be a linear subspace of $\left( \mathbb{Q}^{(\kappa)} \right)^{\omega}$ and $\lbrace [f]_p \mid f \in B \rbrace$ a basis of $Ult_p(M)$. If $\left( \varphi \left( \frac{1}{N} f(n) \right) \right)_{n \in \omega}$ have $p-$limit in $G$, denoted by $a_{N,f}$, for each $f \in B$ and $N \in \mathbb{N}$, then $p - \lim\limits_{n \to \infty} \varphi(h(n)) \in \langle \lbrace a_{N, f} \mid N \in \mathbb{N}, f \in B \rbrace \rangle$, for every $h \in M$.
    \end{prop}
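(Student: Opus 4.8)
The plan is to reduce the claim for an arbitrary $h \in M$ to the hypothesis on the basis elements, by expanding $[h]_p$ in the basis $\{[f]_p \mid f \in B\}$ and transporting the resulting finite linear relation back to a pointwise identity valid on a set in $p$.

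First I would recall that $Ult_p(M) = \{[f]_p \mid f \in M\}$ is a $\mathbb{Q}$-subspace of $Ult_p(\mathbb{Q}^{(\kappa)})$, since the coordinatewise $\mathbb{Q}$-vector space operations on $(\mathbb{Q}^{(\kappa)})^\omega$ descend to the ultrapower: $[g]_p + [g']_p = [g+g']_p$ and $q[g]_p = [qg]_p$ for $q \in \mathbb{Q}$. As $h \in M$, the class $[h]_p$ lies in this subspace, so there are finitely many $f_1, \dots, f_k \in B$ and rationals $q_1, \dots, q_k$ with $[h]_p = \sum_{i=1}^k q_i [f_i]_p$. Picking a common denominator $N \in \mathbb{N}$, write $q_i = m_i/N$ with $m_i \in \mathbb{Z}$. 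Equality of the classes means exactly that
\[
P := \left\{ n \in \omega \ \middle|\ h(n) = \sum_{i=1}^k m_i \cdot \tfrac{1}{N} f_i(n) \right\} \in p,
\]
the right-hand side being computed in the $\mathbb{Q}$-vector space $\mathbb{Q}^{(\kappa)}$.

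Next I would apply $\varphi$ pointwise on $P$. Since each $m_i \in \mathbb{Z}$ and $\varphi$ is a group homomorphism,
\[
\varphi(h(n)) = \sum_{i=1}^k m_i\, \varphi\!\left(\tfrac{1}{N} f_i(n)\right) \qquad (n \in P).
\]
By hypothesis, each sequence $\bigl(\varphi(\tfrac{1}{N} f_i(n))\bigr)_{n \in \omega}$ has $p-$limit $a_{N,f_i}$. Invoking continuity of the operations of $G$, so that $p-$limits commute with finite sums and integer multiples whenever the individual limits exist, the sequence on the right has $p-$limit $\sum_{i=1}^k m_i a_{N,f_i}$. Because $\varphi(h(n))$ coincides with it on the $p$-large set $P$, the limit $p-\lim_{n} \varphi(h(n))$ exists and equals $\sum_{i=1}^k m_i a_{N,f_i}$, which is an integer combination of the generators and hence lies in $\langle \{a_{N,f} \mid N \in \mathbb{N},\, f \in B\}\rangle$.

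The only point needing care — routine, but worth isolating — is the interchange of the $p-$limit with the finite group combination: in a Hausdorff topological group, if $p-\lim_n x^{(i)}_n = c_i$ for $i = 1, \dots, k$, then $p-\lim_n \sum_i m_i x^{(i)}_n = \sum_i m_i c_i$. This follows from continuity of the addition and inversion maps together with the fact that a continuous image of a $p-$limit is a $p-$limit of the images. I note that the argument uses only that $p$ is a free ultrafilter (for uniqueness, via Hausdorffness, and for the validity of the pointwise identity on $P$); the selectivity of $p$ plays no role in this particular proposition.
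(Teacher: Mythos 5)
Your proof is correct and follows essentially the same route as the paper's: expand $[h]_p$ in the basis, transport the relation to a pointwise identity $h(n)=\sum_i m_i\,\tfrac{1}{N}f_i(n)$ on a set in $p$, apply the homomorphism $\varphi$, and use that $p$-limits commute with finite group combinations. The only cosmetic difference is that you clear to a common denominator $N$ while the paper keeps individual denominators $n_i$ (yielding the limit $\sum_i m_i a_{n_i,f_i}$), and your closing observation that selectivity of $p$ is never used is accurate.
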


    \begin{proof}
    Given $h \in M$, there exists $\left\lbrace q_i \mid i < k \right\rbrace \subset \mathbb{Q}$ and $\lbrace f_i \mid i < k \rbrace \subset B$ such that $[h]_p = \displaystyle\sum_{i < k} q_i [f_i]_p$. So, there exists $C \in p$ such that $h(n) = \displaystyle\sum_{i < k} q_i f_i(n)$, for every $n \in C$. Furthermore, consider $q_i = \frac{m_i}{n_i}$, where $m_i \in \mathbb{Z}$ and $n_i \in \mathbb{N}$. As a consequence, $\varphi (h(n)) = \displaystyle\sum_{i < k} m_i \varphi \left( \frac{1}{n_i} f_i(n) \right)$, for every $n \in C$. Thus, $p - \lim\limits_{n \to \infty} \varphi(h(n)) = \displaystyle\sum_{i < k} m_i a_{n_i, f_i} \in \langle \lbrace a_{N, f} \mid N \in \mathbb{N}, f \in B \rbrace \rangle$.
    \end{proof}

    In the remaining of this section, we assume the existence of $2^{\mathfrak{c}}$ incomparable selective ultrafilters and $2^{< 2^{\mathfrak{c}}} = 2^{\mathfrak{c}}$. We will  consider two cases.  
 
    \subsection{Case $\alpha = 2^{\mathfrak{c}}$}

    \begin{prop} \label{Prop2c}
    Let $\lbrace p_{\xi} \mid \omega \leq \xi < 2^{\mathfrak{c}} \rbrace$ be a family of incomparable selective ultrafilters. Given a family $\lbrace g_{\xi, \gamma} \mid \omega \leq \xi < 2^{\mathfrak{c}}, \: \omega \leq \gamma < 2^{\mathfrak{c}} \rbrace$ of injective sequences of $\displaystyle\bigcup_{\theta \in [\omega, 2^{\mathfrak{c}})} \left( \mathbb{Q}^{(2^{\mathfrak{c}})} \right)^{\theta}$ such that $I_{\xi} = \displaystyle\bigcup \lbrace \supp{g_{\xi, \beta}(n)} \mid \beta < \gamma, \: n \in \omega \rbrace \subset \xi$, \footnote{Here $g_{\xi, \gamma} = ( g_{\xi, \beta} )_{\beta < \gamma}$ is an injective sequence in $\left( \mathbb{Q}^{(2^{\mathfrak{c}})} \right)^{\gamma}$.} there exists an increasing sequence $\lbrace \delta_{\xi} \mid \omega \leq \xi < 2^{\mathfrak{c}} \rbrace$ in $2^{\mathfrak{c}} \setminus \omega$, a family $\lbrace A_{\xi} \mid \omega \leq \xi < 2^{\mathfrak{c}} \rbrace$ of disjoint subsets of $2^{\mathfrak{c}} \setminus \omega$, with $\vert A_{\xi} \vert < 2^{\mathfrak{c}}$, and a linearly independent subset $\lbrace x_{m, \beta} \mid \: m \in \mathbb{N}, \beta \in 2^{\mathfrak{c}} \rbrace$ of $\mathbb{T}^{2^{\mathfrak{c}}}$ such that
        \begin{enumerate}
            \item[i)] $\sup A_{\mu} < \delta_{\xi} < \delta_{\xi} + \omega < \min A_{\xi}$, for every $\omega \leq \mu < \xi$,
            \item[ii)] $\left( x_{g_{\xi, \beta}(n)} \right)_{n \in \omega}$ have $p_{\xi}-$limit in $\langle \lbrace x_{m, \theta} \mid m \in \mathbb{N}, \theta \in I_{\xi} \cup A_{\xi} \rbrace \rangle$, for every $\beta < \gamma$ and $\xi \in 2^{\mathfrak{c}} \setminus \omega$,
            \item[iii)] $\left( x_{1, \delta_{\xi}+n} \right)_{n \in \omega}$ have no accumulation point in $\langle \lbrace x_{m, \theta} \mid m \in \mathbb{N}, \theta \in \delta_{\xi} + \omega \rbrace \rangle$, for every $\xi \in 2^{\mathfrak{c}} \setminus \omega$.
        \end{enumerate}
    \end{prop}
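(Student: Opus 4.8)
The plan is to realize the whole family as the image of a single injective homomorphism $x\colon\mathbb{Q}^{(2^{\mathfrak c})}\to\mathbb{T}^{2^{\mathfrak c}}$, writing $x_{m,\beta}=x\big(\tfrac1m\chi_\beta\big)$ and interpreting $x_{g_{\xi,\beta}(n)}$ as $x(g_{\xi,\beta}(n))$; linear independence of $\{x_{m,\beta}\}$ is then exactly injectivity of $x$, and $\langle x_{m,\theta}:\theta\in I_\xi\cup A_\xi\rangle=x\big(\mathbb{Q}^{(I_\xi\cup A_\xi)}\big)$. Each coordinate of $x$ will be a homomorphism furnished by Theorem \ref{homo} (used with $\kappa=2^{\mathfrak c}$, which satisfies $\kappa^\omega=\kappa$). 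The key simplification is that $\mathbb{T}^{2^{\mathfrak c}}$ is compact, so every sequence already has a $p_\xi$-limit for each $\xi$: thus in (ii) only the \emph{location} of the limit must be forced, and in (iii) only that \emph{every} ultrafilter-limit of the block sequence falls outside $x\big(\mathbb{Q}^{(\delta_\xi+\omega)}\big)$.

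First I would run a transfinite recursion on $\xi\in[\omega,2^{\mathfrak c})$ choosing $\delta_\xi$ and $A_\xi$. At stage $\xi$ the ordinals already committed (the $A_\mu$, the blocks $\delta_\mu+\omega$ and the given supports $I_\mu$, $\mu<\xi$) number fewer than $2^{\mathfrak c}$, and $2^{<2^{\mathfrak c}}=2^{\mathfrak c}$ keeps every initial segment of the bookkeeping of size $<2^{\mathfrak c}$; I pick $\delta_\xi$ above $\xi$ and above $\sup A_\mu$ ($\mu<\xi$), reserve the block $\{\delta_\xi+n:n\in\omega\}$, select inside $\{g_{\xi,\beta}:\beta<\gamma\}$ a subset $B_\xi$ whose $p_\xi$-classes together with $\{[\chi_{\overrightarrow{\mu}}]_{p_\xi}:\mu\in I_\xi\}$ form a basis of $Ult_{p_\xi}(M_\xi)$ (with $M_\xi$ the $\mathbb{Q}$-span of the $g_{\xi,\beta}$ and of the constants $\chi_{\overrightarrow{\mu}}$, $\mu\in I_\xi$), and place $A_\xi$ immediately above $\delta_\xi+\omega$ with $|A_\xi|=|B_\xi|<2^{\mathfrak c}$, fixing a bijection $g\mapsto a_g$ from $B_\xi$ onto $A_\xi$. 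This gives (i); since $I_\xi\subset\xi\le\delta_\xi<a_g$, the relabelling $f_{a_g}:=g$ satisfies $\bigcup_n\supp f_{a_g}(n)\subset a_g$, and the required independence of $\{[f_a]_{p_\xi}:a\in A_\xi\}\cup\{[\chi_{\overrightarrow{\mu}}]_{p_\xi}:\mu\in\kappa\}$ follows from the choice of $B_\xi$ by comparing supports coordinatewise.

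With this data fixed, Theorem \ref{homo} supplies, for every nonzero $d$, a homomorphism $\varphi_d$ with $\varphi_d(d)\ne0$ and $p_\xi\text{-}\lim\varphi_d\big(\tfrac1N g(n)\big)=\varphi_d\big(\tfrac1N\chi_{a_g}\big)$ for all $g\in B_\xi$. I assemble $x$ from such coordinates, one $\varphi_d$ for each nonzero $d\in\mathbb{Q}^{(2^{\mathfrak c})}$ (this forces injectivity, hence the asserted linear independence), together with the separation coordinates below. Condition (ii) is then uniform across coordinates: the routed values are $x\big(\tfrac1N\chi_{a_g}\big)$ and the (constant) $x\big(\tfrac1N\chi_\mu\big)$, $\mu\in I_\xi$, so Proposition \ref{plim} applied to $\varphi=x$, $p=p_\xi$, $M=M_\xi$ and the basis $B_\xi\cup\{\chi_{\overrightarrow{\mu}}:\mu\in I_\xi\}$ places $p_\xi\text{-}\lim x(g_{\xi,\beta}(n))$ inside $\langle x_{m,\theta}:\theta\in I_\xi\cup A_\xi\rangle$ for every $\beta<\gamma$.

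The hard part is (iii): ruling out \emph{all} accumulation points of $(x(\chi_{\delta_\xi+n}))_n$ in $H_\xi:=x\big(\mathbb{Q}^{(\delta_\xi+\omega)}\big)$, not merely the $p_\xi$-limit, since $p$-limit control leaves other ultrafilter-limits free. For this I would dedicate a coordinate $s_\xi$ on which the block image $(s_\xi(\chi_{\delta_\xi+n}))_n$ \emph{converges} in $\mathbb{T}$ to a point $t_\xi$ lying outside the subgroup $s_\xi(H_\xi)$ (for instance $t_\xi=[\tfrac12]$ against rationally independent $\beta_n\to\tfrac12$): convergence forces every ultrafilter-limit to equal $t_\xi\notin s_\xi(H_\xi)$, so no accumulation point survives in $H_\xi$. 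If $s_\xi$ is taken to vanish off the block, then an induction on the targets shows all its forced routing-values vanish and $s_\xi(H_\xi)=\langle[\beta_n]\rangle_{\mathbb{Q}}$, making $t_\xi$ easy to choose; the genuine obstruction is that $s_\xi$ must still respect the routing of (ii) for the \emph{later} sequences $g_{\mu,\beta}$ ($\mu>\xi$) whose supports meet $\{\delta_\xi+n\}$. I would resolve this by folding the block sequence itself into the family routed through Theorem \ref{homo} under a reserved incomparable selective ultrafilter—invoking Lemma \ref{variosuf} to treat the block simultaneously with the $g_{\mu,\beta}$—and by choosing $t_\xi$ only after the forced values on $s_\xi$ are pinned down, using that $s_\xi(H_\xi)$ is a subgroup of $\mathbb{T}$ determined by fewer than $2^{\mathfrak c}$ parameters. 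Checking that a convergent block with limit avoiding $s_\xi(H_\xi)$ can be arranged compatibly with the routing constraints is the step I expect to demand the most care.
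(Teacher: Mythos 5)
Your setup and your treatment of items (i)--(ii) coincide with the paper's own proof: the same recursion choosing $\delta_\xi$, $A_\xi$ and a basis $B_{\xi}$ inside $\lbrace g_{\xi,\beta}\rbrace$, the same relabelling of basis elements as $f_\theta$, $\theta\in A_\xi$, coordinates $\varphi_d$ supplied by Theorem \ref{homo} assembled into a monomorphism into $\mathbb{T}^{2^{\mathfrak c}}$, and Proposition \ref{plim} to locate the $p_\xi$-limits. The gap is in (iii). You correctly see that (iii) requires killing \emph{every} ultrafilter limit of the block, and a coordinate on which the block converges to a point outside the image of $\mathbb{Q}^{(\delta_\xi+\omega)}$ would indeed suffice. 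But your mechanism for producing such a coordinate compatibly with the routing constraints, namely folding the block sequence into the routed family under a reserved selective ultrafilter via Lemma \ref{variosuf}, cannot work: the rational-stack machinery behind Theorem \ref{homo} controls exactly one ultrafilter limit of each routed sequence, never convergence. Declaring the block to be a routed sequence under a reserved $q$ pins down its $q$-limit (as the value at some new target $\chi_\theta$) and nothing more; all other ultrafilter limits of $\left(s_\xi(\chi_{\delta_\xi+n})\right)_{n\in\omega}$ remain uncontrolled, so convergence is not achieved and (iii) does not follow. The step you flag as ``demanding the most care'' is exactly where the argument fails to close.

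The missing ingredient is the ``Furthermore'' clause (c)--(d) of Theorem \ref{homo}, which your proposal never invokes. Since every routing target set $A_\eta$ lies above the initial segment on which values are prescribed, one may prescribe values there freely: the values at the targets are then \emph{defined} to be the forced $p$-limits (this is how the proof of Theorem \ref{homo} extends $\psi_A$ beyond $\alpha$), so no conflict with the routing of later sequences $g_{\mu,\beta}$, $\mu>\xi$, ever arises; this dissolves the obstruction you identified. The paper uses this clause to produce, for each $\xi$, countably many coordinates $\psi_{\xi,m}$ with $\psi_{\xi,m}\mid_{\mathbb{Q}^{(\delta_\xi+m)}}=0$ and $\psi_{\xi,m}(\chi_{\delta_\xi+n})=[\pi]$ for $n\geq m$. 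Then any candidate accumulation point $x_t$ with $t\in\mathbb{Q}^{(\delta_\xi+\omega)}$ is discarded in the single coordinate $\psi_{\xi,n_0}$ where $\supp{t}\subset\delta_\xi+n_0$: there the block sequence is eventually $[\pi]$ while $x_t$ reads $0$. So the paper needs no convergence-to-an-exterior-point coordinate at all; it separates each potential limit by a coordinate depending on (the support bound of) that limit. Your single-coordinate idea could in fact be completed by the same mechanism (prescribe rationally independent values $[\beta_n]\to\left[\tfrac12\right]$ on the block, $0$ below $\delta_\xi$, forced limits above), but this requires generalizing the clause beyond $\lbrace 0,[\pi]\rbrace$-valued prescriptions, and as written your proposal neither states nor proves such a generalization.
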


    \begin{proof}
    Let $\lbrace g_{\xi, \gamma} \mid \omega \leq \xi < 2^{\mathfrak{c}}, \: \omega \leq \gamma < 2^{\mathfrak{c}} \rbrace$ be a family of injective sequences of $\displaystyle\bigcup_{\theta \in [\omega, 2^{\mathfrak{c}})} \left( \mathbb{Q}^{(2^{\mathfrak{c}})} \right)^{\theta}$ such that $I_{\xi} = \displaystyle\bigcup \lbrace \supp{g_{\xi, \beta}(n)} \mid \beta < \gamma, \: n \in \omega \rbrace \subset \xi$.

    Consider the linear subspace $\langle \lbrace [ g_{\omega, \beta} ]_{p_{\omega}} \mid \beta < \gamma_{\omega}  \rbrace \cup \lbrace [ \chi_{\overrightarrow{\mu}} ]_{p_{\omega}} \mid \mu \in I_{\omega} \rbrace \rangle$. So, exists $B_{\omega} \subset \gamma_{\omega}$ such that $\lbrace [ g_{\omega, \beta} ]_{p_{\omega}} \mid \beta \in B_{\omega}  \rbrace \cup \lbrace [ \chi_{\overrightarrow{\mu}} ]_{p_{\omega}} \mid \mu \in I_{\omega} \rbrace$ is a basis. Put $\delta_{\omega} = \omega$ and take $A_{\omega} \subset 2^{\mathfrak c} \setminus  (\delta_{\omega} + \omega)$ such that $\lbrace f_{\theta} \mid \theta \in A_{\omega} \rbrace$ be a indexation of $\lbrace g_{\omega, \beta} \mid \beta \in B_{\omega} \rbrace$. Thereby, $\lbrace [f_{\theta}]_{p_\omega} \mid \theta \in A_{\omega} \rbrace \cup \lbrace [ \chi_{\overrightarrow{\mu}} ]_{p_{\omega}} \mid \mu \in 2^{\mathfrak c} \rbrace$ is linearly independent. 

    Given $\xi \in [\omega, 2^{\mathfrak{c}})$, assume that we have $\lbrace A_{\mu} \mid \omega \leq \mu < \xi \rbrace$ and $\lbrace \delta_{\mu} \mid \omega \leq \mu < \xi \rbrace$. Let $\delta_{\xi} \in 2^{\mathfrak{c}}$ such that $\displaystyle\bigcup_{\mu \in [\omega, \xi)} A_{\mu} \subset \delta_{\xi}$. Analogously, exists $B_{\xi} \subset \gamma_{\xi}$ such that $\lbrace [ g_{\xi, \beta} ]_{p_{\xi}} \mid \beta \in B_{\xi}  \rbrace \cup \lbrace [ \chi_{\overrightarrow{\mu}} ]_{p_{\xi}} \mid \mu \in I_{\xi} \rbrace$ is a basis of the  linear subspace $\langle \lbrace [ g_{\xi, \beta} ]_{p_{\xi}} \mid \beta < \gamma_{\xi}  \rbrace \cup \lbrace [ \chi_{\overrightarrow{\mu}} ]_{p_{\xi}} \mid \mu \in I_{\xi} \rbrace \rangle$. Take $A_{\xi} \subset 2^{\mathfrak c} \setminus  ( \delta_{\xi} + \omega )$ such that $\lbrace f_{\theta} \mid \theta \in A_{\xi} \rbrace$ be a indexation of $\lbrace g_{\xi, \beta} \mid \beta \in B_{\xi}  \rbrace$. Thereby, $\lbrace [f_{\theta}]_{p_\xi} \mid \theta \in A_{\xi} \rbrace \cup \lbrace [ \chi_{\overrightarrow{\mu}} ]_{p_{\xi}} \mid \mu \in 2^{\mathfrak c} \rbrace$ is linearly independent.

    Due to Theorem \ref{homo}, for every $d \in \mathbb{Q}^{(2^{\mathfrak{c}})}$ non-zero, exists a homomorphism $\phi_{d} : \mathbb{Q}^{(2^{\mathfrak{c}})} \to \mathbb{T}$ such that $\phi_d(d) \neq 0$ and $p_{\xi}- \lim\limits_{n \to \infty} \phi_d \left( \frac{1}{N} f_{\theta}(n) \right) = \phi_d \left( \frac{1}{N} \chi_{\theta} \right)$, for each $N \in \mathbb{N}$, $\theta \in A_{\xi}$ and $\xi \in [\omega, 2^{\mathfrak{c}})$. 
    
    Furthermore, for every $\xi \in 2^{\mathfrak{c}} \setminus \omega$ and $m \in \omega$, we can define a homomorphism $\psi_{\xi, m} : \mathbb{Q}^{(2^{\mathfrak{c}})} \to \mathbb{T}$ such that $\left\lbrace \beta < \delta_{\xi}+\omega \mid \psi_{\xi, m} \left( \chi_{\beta} \right) =  [ \pi ] \right\rbrace = [\delta_{\xi}+m, \delta_{\xi} + \omega)$, $\psi_{\xi, m} \mid_{\mathbb{Q}^{(\delta_{\xi}+m)}} = 0$,  and $p_{\xi}- \lim\limits_{n \to \infty} \psi_{\xi, m} \left( \frac{1}{N}  f_{\beta}(n) \right) = \psi_{\xi, m} \left( \frac{1}{N} \chi_{\beta} \right)$, for each $N \in \mathbb{N}$, $\beta \in A_{\xi}$ and $\xi \in [\omega, 2^{\mathfrak{c}})$. 

    Fix an enumeration $\lbrace h_{\beta} \mid \beta \in 2^{\mathfrak{c}} \rbrace$ of $\lbrace \phi_{d} \mid d \in \mathbb{Q}^{(2^{\mathfrak{c}})}  \setminus \lbrace 0 \rbrace \rbrace \cup \lbrace \psi_{\xi, m} \mid \xi \in 2^{\mathfrak{c}} \setminus \omega, m \in \omega \rbrace$. We define the monomorphism $\Phi : \mathbb{Q}^{(2^{\mathfrak{c}})} \to \mathbb{T}^{2^{\mathfrak{c}}}$ by $\Phi(d) = (h_{\beta}(d))_{\beta \in 2^{\mathfrak{c}}}$. 
    
    For each $N \in \mathbb{N}$ and $\xi \in 2^{\mathfrak{c}}$, let $x_{N, \xi} = \Phi \left( \frac{1}{N} \chi_{\xi} \right)$. In that way, is clear that $\lbrace x_{N, \xi} \mid N \in \mathbb{N}, \xi \in 2^{\mathfrak{c}} \rbrace$ is linearly independent. 
    
    Let $\xi \in [\omega, 2^{\mathfrak{c}})$. We know that $p_{\xi}- \lim\limits_{k \to \infty} \Phi \left( \frac{1}{N} f_{\theta}(k) \right) = \Phi \left( \frac{1}{N} \chi_{\theta} \right) = x_{N, \theta}$, for every $N \in \mathbb{N}$ and $\theta \in A_{\xi}$. As a consequence, due to Proposition \ref{plim}, $\left( x_{g_{\xi, \beta}(n)} \right)_{n \in \omega}$ have $p_{\xi}-$limit in $\langle \lbrace x_{m, \theta} \mid m \in \mathbb{N}, \theta \in I_{\xi} \cup A_{\xi} \rbrace \rangle$, for every $\beta < \gamma$. 
    
    On the other hand, for every $t \in \mathbb{Q}^{ (\delta_{\xi}+\omega) }$, exists $n_0 \in \omega$ such that $\supp{t} \subset \delta_{\xi} + n_0$. Let $\lambda \in 2^{\mathfrak{c}}$ such that $\psi_{\delta_{\xi}, n_0} = h_{\lambda}$. Thereby, $h_{\lambda} \left( \chi_{\delta_{\xi+m}} \right) = [\pi] $, if $m \geq n_0$, and $h_{\lambda} \left( \chi_{\delta_{\xi}+m} \right) = 0$, if $m < n_o$. As a consequence, $x_t(\lambda)$ is not an accumulation point of $\left( x_{1, \delta_{\xi} + n}(\lambda) \right)_{n \in \omega}$. That is, $\left( x_{1, \delta_{\xi} + n} \right)_{n \in \omega}$ have no accumulation point in $\langle \lbrace x_{m, \alpha} \mid m \in \mathbb{N}, \alpha \in \delta_{\xi}+\omega  \rbrace \rangle$.
    \end{proof}

    \begin{lem} \label{Lema2c}
    Let $\lbrace p_{\xi} \mid \omega \leq \xi < 2^{\mathfrak{c}} \rbrace$ be a family of incomparable selective ultrafilters and $\lbrace g_{\xi, \gamma} \mid \omega \leq \xi < 2^{\mathfrak{c}}, \: \omega \leq \gamma < 2^{\mathfrak{c}} \rbrace$ a enumeration of the injective sequences of $\displaystyle\bigcup_{\theta \in [\omega, 2^{\mathfrak{c}})} \left( \mathbb{Q}^{(2^{\mathfrak{c}})} \right)^{\theta}$ such that
        \begin{itemize}
            \item $I_{\xi} = \displaystyle\bigcup \lbrace \supp{g_{\xi, \beta}(n)} \mid \beta < \gamma, \: n \in \omega \rbrace \subset \xi$,
            \item each $g_{\xi}$ appears $2^{\mathfrak{c}}$ times in the enumeration.
        \end{itemize}
    Consider $\lbrace \delta_{\xi} \mid \omega \leq \xi < 2^{\mathfrak{c}} \rbrace$, $\lbrace A_{\xi} \mid \omega \leq \xi < 2^{\mathfrak{c}} \rbrace$ and $\lbrace x_{m, \beta} \mid m \in \mathbb{N}, \beta \in 2^{\mathfrak{c}} \rbrace$ as in Proposition \ref{Prop2c}. Given a enumeration $\lbrace q_{\eta} \mid \eta < 2^{\mathfrak{c}} \rbrace$ of the free ultrafilters in $\omega$, there exists increasing sequences $\lbrace J_{\xi} \mid \xi < 2^{\mathfrak{c}} \rbrace$, $\lbrace S_{\xi} \mid \xi < 2^{\mathfrak{c}} \rbrace$ in $[2^{\mathfrak{c}}]^{<2^{\mathfrak{c}}}$ and $\lbrace \mu_{\xi} \mid \xi < 2^{\mathfrak{c}} \rbrace$ in $ 2^{\mathfrak{c}} \setminus \omega$ satisfying
        \begin{enumerate}
            \item[i)] $J_{\xi} \cap S_{\xi} = \emptyset$,
            \item[ii)] $g_{\mu_{\xi}} = g_{\delta_{\xi}}$, where $\delta_{\xi}$ is the least ordinal $\delta$ in $2^{\mathfrak{c}} \setminus \omega$ such that $I_{\delta} \subset J_{\xi}$ and $g_{\delta} \neq g_{\mu_{\eta}}$, for every $\eta < \xi$,
            \item[iii)] $\left( x_{g_{\mu_{\xi}, \beta}(n)} \right)_{n \in \omega}$ have $p_{\mu_{\xi}}-$limit in $\langle \lbrace x_{m, \theta} \mid m \in \mathbb{N},\theta \in J_{\xi+1} \rbrace  \rangle$, for every $\beta < \gamma$,
            \item[iv)] $\langle \lbrace x_{m, \theta} \mid m \in \mathbb{N}, \theta \in J_{\xi} \cup I \rbrace  \rangle$ is not $q_{\xi}-$compact, for each $I \subset 2^{\mathfrak{c}} \setminus S_{\xi}$.
        \end{enumerate}
    \end{lem}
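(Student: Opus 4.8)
The plan is to construct the three sequences $\langle J_\xi\rangle$, $\langle S_\xi\rangle$, $\langle\mu_\xi\rangle$ by a transfinite recursion of length $2^{\mathfrak c}$, handling the continuity/convergence requirement (iii) and the anti-compactness requirement (iv) at each step while keeping the auxiliary index sets small (of size $<2^{\mathfrak c}$) so that the recursion does not run out of room. The key structural fact I would exploit is Proposition~\ref{Prop2c}: for every $\xi$ it already furnishes a $\delta_\xi$, a set $A_\xi$ of size $<2^{\mathfrak c}$, and the two properties (ii) and (iii) of that proposition, namely that the sequence $\big(x_{g_{\xi,\beta}(n)}\big)_n$ has a $p_\xi$-limit inside $\langle\{x_{m,\theta}:m\in\mathbb N,\ \theta\in I_\xi\cup A_\xi\}\rangle$, and that $\big(x_{1,\delta_\xi+n}\big)_n$ has \emph{no} accumulation point in $\langle\{x_{m,\theta}:m\in\mathbb N,\ \theta\in\delta_\xi+\omega\}\rangle$. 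The first of these is exactly what drives (iii); the second is the engine for (iv).

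\medskip\noindent\textbf{The recursion.} At stage $\xi$, assume $J_\eta$, $S_\eta$, $\mu_\eta$ are defined for all $\eta<\xi$, with each $J_\eta,S_\eta\in[2^{\mathfrak c}]^{<2^{\mathfrak c}}$. First set $J_\xi=\bigcup_{\eta<\xi}J_\eta$ (and similarly accumulate $S_\xi$ at limits), closing it off so that $|J_\xi|<2^{\mathfrak c}$, which holds because $2^{<2^{\mathfrak c}}=2^{\mathfrak c}$ guarantees a union of fewer than $2^{\mathfrak c}$ sets each of size $<2^{\mathfrak c}$ still has size $<2^{\mathfrak c}$. To satisfy (ii), I would let $\delta_\xi$ be the least $\delta\in 2^{\mathfrak c}\setminus\omega$ with $I_\delta\subseteq J_\xi$ and $g_\delta\neq g_{\mu_\eta}$ for all $\eta<\xi$; such a $\delta$ exists because each $g$ appears $2^{\mathfrak c}$ times in the enumeration while only $|\xi|<2^{\mathfrak c}$ values have been used, so some unused copy with support already inside $J_\xi$ remains. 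Then put $\mu_\xi=\delta_\xi$, which forces $g_{\mu_\xi}=g_{\delta_\xi}$ as required. To get (iii), I would enlarge $J_\xi$ to $J_{\xi+1}$ by adjoining $A_{\mu_\xi}$ (the set from Proposition~\ref{Prop2c}); then property (ii) of that proposition places the $p_{\mu_\xi}$-limit of $\big(x_{g_{\mu_\xi,\beta}(n)}\big)_n$ inside $\langle\{x_{m,\theta}:\theta\in I_{\mu_\xi}\cup A_{\mu_\xi}\}\rangle\subseteq\langle\{x_{m,\theta}:\theta\in J_{\xi+1}\}\rangle$, and the size increment is $<2^{\mathfrak c}$.

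\medskip\noindent\textbf{Securing non-$q_\xi$-compactness (iv).} This is the crux. For each free ultrafilter $q_\xi$ I must ensure the generated group fails to be $q_\xi$-compact no matter which $I\subseteq 2^{\mathfrak c}\setminus S_\xi$ is later added. The strategy is to reserve a "witness block" of the form $\{\delta+n:n\in\omega\}$ that escapes to infinity. Concretely, I would pick an ordinal $\rho_\xi$ large enough that $\delta_{\rho_\xi}>\sup(J_\xi)$ and $\delta_{\rho_\xi}+\omega$ is disjoint from $J_\xi$, and then throw the tail block $\{\delta_{\rho_\xi}+n:n\in\omega\}$ (equivalently its support indices) into $S_\xi$, declaring $S_{\xi+1}$ to include $\delta_{\rho_\xi}+\omega$. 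Property (iii) of Proposition~\ref{Prop2c} then says $\big(x_{1,\delta_{\rho_\xi}+n}\big)_n$ has no accumulation point in $\langle\{x_{m,\theta}:\theta\in\delta_{\rho_\xi}+\omega\}\rangle$; since any admissible $I$ is forced to avoid $S_\xi\supseteq\delta_{\rho_\xi}+\omega$, the subgroup $\langle\{x_{m,\theta}:\theta\in J_\xi\cup I\}\rangle$ still lives inside a coordinate environment where this sequence has no accumulation point—hence in particular no $q_\xi$-limit—so the subgroup is not $q_\xi$-compact. Keeping $J_\xi\cap S_\xi=\emptyset$ throughout (clause (i)) is a bookkeeping invariant: each new witness block is chosen past $\sup(J_\xi)$ and never re-entered into any $J$, while the convergence data added to $J_{\xi+1}$ via $A_{\mu_\xi}$ is chosen (using the freedom in Proposition~\ref{Prop2c}, where $\sup A_\mu<\delta_\xi<\min A_\xi$) to lie off the reserved blocks.

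\medskip\noindent\textbf{Main obstacle.} The principal difficulty is the simultaneous coherence of (iii) and (iv): the indices $A_{\mu_\xi}$ I am \emph{forced} to add to $J$ to realize the $p_{\mu_\xi}$-limits must be kept disjoint from every reserved witness block in $S$, and conversely the witness blocks must avoid all accumulated $J$'s, across all $2^{\mathfrak c}$ stages at once. Making this work cleanly requires the ordering guaranteed by clause (i) of Proposition~\ref{Prop2c}, $\sup A_\mu<\delta_\xi<\delta_\xi+\omega<\min A_\xi$, which lets me interleave "limit-supporting" indices and "escaping" blocks without collision; the careful verification that a single witness block defeats $q_\xi$-compactness \emph{for all} later choices of $I$—and not merely for the final group—is where I expect the real work to lie, and it rests squarely on the no-accumulation clause (iii) of Proposition~\ref{Prop2c} together with the fact that the relevant coordinate $\lambda$ (where $\psi_{\delta,n_0}=h_\lambda$) separates the escaping sequence from every element of the ambient subgroup.
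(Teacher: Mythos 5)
There is a genuine gap, and it is in the crux you yourself identified: clause (iv). To prove that $\langle \lbrace x_{m, \theta} \mid m \in \mathbb{N}, \theta \in J_{\xi} \cup I \rbrace \rangle$ is not $q_{\xi}$-compact you must exhibit a sequence of elements \emph{of that subgroup} with no $q_{\xi}$-limit \emph{in that subgroup}. Your construction puts the witness block $[\delta_{\rho_{\xi}}, \delta_{\rho_{\xi}} + \omega)$ into $S_{\xi}$, hence excludes it from $J_{\xi} \cup I$; since $\lbrace x_{m,\beta} \mid m \in \mathbb{N}, \beta \in 2^{\mathfrak{c}} \rbrace$ is linearly independent, the terms $x_{1, \delta_{\rho_{\xi}}+n}$ then do not belong to the subgroup at all, so their failure to accumulate there says nothing about its $q_{\xi}$-compactness. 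The paper does exactly the opposite: the block $[\delta_{\beta_{\xi}}, \delta_{\beta_{\xi}}+\omega)$ is put \emph{into} $J_{\xi}$, so the witness sequence lies inside every group under consideration, and what goes into $S_{\xi}$ is a \emph{single} coordinate $\theta_{\xi}$, chosen so that excluding it kills the only possible limit: if $\left( x_{1, \delta_{\beta_{\xi}}+n} \right)_{n \in \omega}$ has a $q_{\xi}$-limit in the full group $\langle \lbrace x_{m,\alpha} \mid m \in \mathbb{N}, \alpha \in 2^{\mathfrak{c}} \rbrace \rangle$, that limit is some $x_{d_0}$, and by Proposition \ref{Prop2c}(iii) one has $\supp{d_0} \not\subseteq \delta_{\beta_{\xi}}+\omega$, so one reserves $\theta_{\xi} \in \supp{d_0} \setminus (\delta_{\beta_{\xi}}+\omega)$; otherwise $\theta_{\xi}$ is arbitrary. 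Uniqueness of $q_{\xi}$-limits plus linear independence then gives that the sequence has no $q_{\xi}$-limit in any subgroup whose generating coordinates avoid $\theta_{\xi}$ — which is what (iv) demands uniformly over all $I \subseteq 2^{\mathfrak{c}} \setminus S_{\xi}$. Your proposal has no counterpart of this "pass to the full group and reserve a support coordinate" step; note also that the no-accumulation fact you quote is only known relative to $\langle \lbrace x_{m,\theta} \mid m \in \mathbb{N}, \theta \in \delta+\omega \rbrace \rangle$, not relative to the much larger groups appearing in (iv), so even in your framing the inference is unjustified.

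A secondary, related error: you set $\mu_{\xi} = \delta_{\xi}$ and remark that this "forces $g_{\mu_{\xi}} = g_{\delta_{\xi}}$." The lemma distinguishes $\mu_{\xi}$ from $\delta_{\xi}$ precisely because $A_{\delta_{\xi}}$ — which must be adjoined to $J_{\xi+1}$ to realize the $p$-limits in (iii) — may meet the reserved set $S_{\xi}$, destroying (i). The paper uses the hypothesis that each $g$ appears $2^{\mathfrak{c}}$ times in the enumeration to choose $\mu_{\xi} \neq \delta_{\xi}$ with $g_{\mu_{\xi}} = g_{\delta_{\xi}}$ but $\theta_{\xi} < \min A_{\mu_{\xi}}$, so that $J_{\xi+1} = J_{\xi} \cup A_{\mu_{\xi}}$ remains disjoint from $S_{\xi}$. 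With $\mu_{\xi} = \delta_{\xi}$ (the \emph{least} admissible ordinal) you have no freedom in the choice of $A_{\mu_{\xi}}$, which contradicts your own later remark that this set is "chosen \ldots to lie off the reserved blocks."
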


    \begin{proof}
    Let $\lbrace q_{\eta} \mid \eta < 2^{\mathfrak{c}} \rbrace$ be a enumeration of $\omega^*$. We know that $\left( x_{1, \delta_{\omega} + n} \right)_{n \in \omega}$ have no accumulation point in the group generated by $\lbrace x_{m, \alpha} \mid m \in \mathbb{N}, \alpha \in \delta_{\omega} + \omega \rbrace$. 
    
    We claim that exists $\theta_0 \in 2^{\mathfrak{c}} \setminus (\delta_{\omega} + \omega)$ such that $\left( x_{1, \delta_{\omega} + n} \right)_{n \in \omega}$ don't have $q_0-$limit in $\langle \lbrace x_{m, \alpha} \mid m \in \mathbb{N}, \alpha \in 2^{\mathfrak{c}} \setminus \lbrace \theta_0 \rbrace \rbrace \rangle$. In fact, if $\left( x_{1, \delta_{\omega} + n} \right)_{n \in \omega}$ have $q_0-$limit in $\langle \lbrace x_{m, \alpha} \mid m \in \mathbb{N}, \alpha \in 2^{\mathfrak{c}} \rbrace \rangle$, exists $d_0 \in \mathbb{Q}^{(2^{\mathfrak{c}})}$ such that $x_{d_0}$ is that $q_0-$limit. From there we can choose $\theta_0 \in \supp{d_0} \setminus (\delta_{\omega} + \omega)$. Otherwise, we can take any $\theta_0 \in 2^{\mathfrak{c}} \setminus (\delta_{\omega} + \omega)$.
    
    We define $J_{0} = \omega \cup [\delta_{\omega}, \delta_{\omega} + \omega )$ and $S_{0} = \lbrace \theta_{0} \rbrace$. Let $\gamma_0 \in 2^{\mathfrak{c}} \setminus \omega$ be minimal such that $I_{\gamma_0} \subset J_0$. So, exists $\mu_0 \in 2^{\mathfrak{c}} \setminus \omega$ such that $g_{\mu_0} = g_{\gamma_0}$ and $\theta_0 < \min A_{\mu_0}$. Put $J_1 = J_0 \cup A_{\mu_0}$, clearly $(i), (ii)$, $(iii)$ are satisfied. 
    
    Given $\xi < 2^{\mathfrak{c}}$, assume that we have $\lbrace J_{\beta} \mid \beta < \xi \rbrace$, $\lbrace S_{\beta} \mid \beta < \xi \rbrace$ and $\lbrace \mu_{\beta} \mid \beta < \xi \rbrace$. Let $L = \displaystyle\bigcup_{\beta < \xi} J_{\beta}$ and $T = \displaystyle\bigcup_{\beta < \xi} S_{\beta}$, notice that are disjoint and $L, T \in [2^{\mathfrak{c}}]^{< 2^{\mathfrak{c}}}$. So, exists $\beta_{\xi} \in 2^{\mathfrak{c}} \setminus \omega$ such that $\sup (L \cup T) < \delta_{\beta_{\xi}}$.
    
    Due to item $(iii)$ of Proposition \ref{Prop2c}, we know that $\left( x_{1, \delta_{\beta_{\xi}} + n} \right)_{n \in \omega}$ have no  accumulation point in the group generated by $\lbrace x_{m, \alpha} \mid m \in \mathbb{N}, \alpha \in \delta_{\beta_{\xi}} + \omega \rbrace$. Analogously, exists $\theta_{\xi} \in 2^{\mathfrak{c}} \setminus (\delta_{\beta_{\xi}} + \omega)$ such that $\left( x_{1, \delta_{\beta_{\xi}} + n} \right)_{n \in \omega}$ don't have $q_{\xi}-$limit in the group generated by $\lbrace x_{m, \alpha} \mid m \in \mathbb{N}, \alpha \in 2^{\mathfrak{c}} \setminus \lbrace \theta_{\xi} \rbrace \rbrace$.
    
    We define $J_{\xi} = L \cup [\delta_{\beta_{\xi}}, \delta_{\beta_{\xi}} + \omega)$ and $S_{\xi} = T \cup \lbrace \theta_{\xi} \rbrace$. Now, let $\gamma_{\xi} \in 2^{\mathfrak{c}} \setminus \omega$ be minimal such that $I_{\gamma_{\xi}} \subset J_{\xi}$ and $g_{\gamma_{\xi}} \neq g_{\mu_{\eta}}$, for every $\eta < \xi$. So, exists $\mu_{\xi} \in 2^{\mathfrak{c}} \setminus \omega$ such that $g_{\mu_{\xi}} = g_{\gamma_{\xi}}$ and $\theta_{\xi} < \min A_{\mu_{\xi}}$. Put $J_{\xi +1} = J_{\xi} \cup A_{\mu_{\xi}}$, we obtain $(i)$, $(ii)$ and $(iii)$. 

    Finally, about $(iv)$, given $I \subseteq 2^{\mathfrak{c}} \setminus S_{\xi}$, clearly $I \cup J_{\xi} \subseteq 2^{\mathfrak{c}} \setminus S_{\xi}  \subseteq 2^{\mathfrak{c}} \setminus \lbrace \theta_{\xi} \rbrace $. As a consequence, $\left( x_{1, \delta_{\beta_{\xi}} + n} \right)_{n \in \omega}$ don't have $q_{\xi}-$limit in the group generated by $\lbrace x_{m, \alpha} \mid m \in \mathbb{N}, \alpha \in J_{\xi} \cup I \rbrace$. That is,  $\langle \lbrace x_{m, \alpha} \mid m \in \mathbb{N}, \alpha \in J_{\xi} \cup I \rbrace \rangle$ is not $q_{\xi}-$compact, for each $I \subset 2^{\mathfrak{c}} \setminus S_{\xi}$.
    \end{proof}
    
    Below we present a solution to the Problem 6.3 of \cite{Boero} for the case $\alpha = 2^{\mathfrak c}$.

    \begin{teo} \label{Caso2c}
    There exists a torsion-free Abelian topological group $G$ such that $G^{\gamma}$ is countably compact, for every $\gamma < 2^{\mathfrak{c}}$, but $G^{2^{\mathfrak{c}}}$ is not countably compact.
    \end{teo}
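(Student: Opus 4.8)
The plan is to let $G$ be the subgroup of $\mathbb{T}^{2^{\mathfrak c}}$ generated by the linearly independent set produced in Lemma~\ref{Lema2c}, restricted to the coordinates actually used by the bookkeeping; precisely, I would set
\[
    G = \left\langle \left\lbrace x_{m,\theta} \mid m \in \mathbb{N},\ \theta \in \textstyle\bigcup_{\xi < 2^{\mathfrak c}} J_{\xi} \right\rbrace \right\rangle \subseteq \mathbb{T}^{2^{\mathfrak c}},
\]
where $\lbrace x_{m,\theta}\rbrace$, $\lbrace J_\xi\rbrace$, $\lbrace S_\xi\rbrace$, $\lbrace \mu_\xi\rbrace$ are as in Lemma~\ref{Lema2c}, applied to an enumeration $\lbrace g_{\xi,\gamma}\rbrace$ of all injective sequences (each appearing $2^{\mathfrak c}$ times) and an enumeration $\lbrace q_\eta \mid \eta < 2^{\mathfrak c}\rbrace$ of all of $\omega^*$. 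Since the $x_{m,\theta}$ are linearly independent and, for fixed $\theta$, the family $\lbrace x_{m,\theta}\mid m\in\mathbb N\rbrace$ generates a copy of $\mathbb Q$, the group $G$ is isomorphic to $\mathbb Q^{(\bigcup_\xi J_\xi)}$; in particular $G$ is torsion-free, and as a subgroup of $\mathbb T^{2^{\mathfrak c}}$ it is a Hausdorff topological group. A routine cardinal count using $2^{<2^{\mathfrak c}} = 2^{\mathfrak c}$ shows $|\bigcup_\xi J_\xi| = 2^{\mathfrak c}$.

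For the failure of countable compactness of $G^{2^{\mathfrak c}}$ I would invoke the Ginsburg--Saks theorem: it suffices to show $G$ is not $p$-compact for any $p \in \omega^*$, and since $\lbrace q_\eta\rbrace$ lists every free ultrafilter I only need $G$ not $q_\xi$-compact for each $\xi < 2^{\mathfrak c}$. The point is that $G$ coincides with the group appearing in item (iv) of Lemma~\ref{Lema2c} for a suitable $I$: the witnessing ordinals $\theta_\eta$ satisfy $\theta_\eta \notin \bigcup_\xi J_\xi$ (each $\theta_\eta$ lies above $\sup\bigcup_{\beta\le\eta} J_\beta$ and belongs to every later $S_\beta$, while $J_\beta \cap S_\beta = \emptyset$), so $S_\xi = \lbrace \theta_\beta \mid \beta \le \xi\rbrace$ is disjoint from $\bigcup_\xi J_\xi$. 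Hence $\bigcup_\xi J_\xi \subseteq 2^{\mathfrak c}\setminus S_\xi$, and taking $I = (\bigcup_\eta J_\eta)\setminus J_\xi$ in (iv) gives $\langle \lbrace x_{m,\theta}\mid \theta \in J_\xi \cup I\rbrace\rangle = G$ not $q_\xi$-compact; the witnessing sequence $(x_{1,\delta_{\beta_\xi}+n})_n$ indeed lies in $G$ because $[\delta_{\beta_\xi},\delta_{\beta_\xi}+\omega)\subseteq J_\xi$. As this holds for every $\xi$, $G$ is $p$-compact for no $p$, so $G^{2^{\mathfrak c}}$ is not countably compact.

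For the countable compactness of $G^{\gamma}$ with $\gamma < 2^{\mathfrak c}$, I would take an arbitrary sequence $(w_n)_{n\in\omega}$ in $G^{\gamma}$ and produce an accumulation point. Discarding a constant subsequence if necessary, I may assume $(w_n)$ is injective; pulling back along $G \cong \mathbb Q^{(\bigcup_\xi J_\xi)}$ it corresponds to an injective sequence $g = (g_\beta)_{\beta<\gamma}$ in $(\mathbb Q^{(2^{\mathfrak c})})^{\gamma}$ whose total support $I$ has size $<2^{\mathfrak c}$. Because every such sequence occurs $2^{\mathfrak c}$ times in the enumeration and the filtration $(J_\xi)$ increases with $I$ eventually absorbed into a single $J_\xi$ (the cardinal arithmetic provides the cofinality needed), the ``least unprocessed $\delta$ with $I_\delta\subset J_\xi$'' clause of (ii) eventually selects $g$, i.e.\ $g = g_{\mu_\xi}$ for some $\xi$. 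Item (iii) then yields, for every $\beta<\gamma$, a $p_{\mu_\xi}$-limit of $(x_{g_{\mu_\xi,\beta}(n)})_n$ inside $G$, and since $p$-limits in a product are computed coordinatewise, $(w_n)$ has a $p_{\mu_\xi}$-limit in $G^{\gamma}$, the desired accumulation point.

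The main obstacle I expect is the bookkeeping verification of the third paragraph: one must ensure that every sequence arising in some $G^{\gamma}$ $(\gamma<2^{\mathfrak c})$ is genuinely captured as a $g_{\mu_\xi}$. This requires, with the aid of $2^{<2^{\mathfrak c}}=2^{\mathfrak c}$, both that there are exactly $2^{\mathfrak c}$ candidate sequences (so that $2^{\mathfrak c}$-fold repetition in the enumeration is possible) and that $\bigcup_\xi J_\xi$ absorbs any support of size $<2^{\mathfrak c}$ into a single stage $J_\xi$. The parallel bookkeeping for the $q_\xi$ must not interfere with that for the $p_{\mu_\xi}$, which is exactly what the disjointness $J_\xi \cap S_\xi = \emptyset$ and the placement $\theta_\xi \notin \bigcup_\eta J_\eta$ are designed to secure.
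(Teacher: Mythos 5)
Your proposal is correct and follows essentially the same route as the paper's own proof: the same group $G = \bigl\langle \lbrace x_{m,\theta} \mid m \in \mathbb{N},\ \theta \in \bigcup_{\xi < 2^{\mathfrak c}} J_{\xi} \rbrace \bigr\rangle$, the same use of items (ii)--(iii) of Lemma \ref{Lema2c} together with the regularity of $2^{\mathfrak c}$ to capture every injective sequence in $G^{\gamma}$ as some $g_{\mu_{\xi}}$, and the same combination of item (iv) with the Ginsburg--Saks theorem to rule out countable compactness of $G^{2^{\mathfrak c}}$. Your explicit verification that $\bigcup_{\eta} J_{\eta}$ is disjoint from each $S_{\xi}$ (so that item (iv) genuinely applies to all of $G$) is a detail the paper leaves implicit, but it is the same argument.
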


    \begin{proof}
    Let $\lbrace p_{\xi} \mid \omega \leq \xi < 2^{\mathfrak{c}} \rbrace$, $\lbrace g_{\xi, \gamma} \mid \omega \leq \xi < 2^{\mathfrak{c}}, \: \omega \leq \gamma < 2^{\mathfrak{c}} \rbrace$,  $\lbrace \mu_{\xi} \mid \xi < 2^{\mathfrak{c}} \rbrace$, $\lbrace x_{\xi} \mid \xi < 2^{\mathfrak{c}} \rbrace$, $\lbrace q_{\xi} \mid \xi < 2^{\mathfrak{c}} \rbrace$, $\lbrace J_{\xi} \mid \xi < 2^{\mathfrak{c}} \rbrace$, $\lbrace S_{\xi} \mid \xi < 2^{\mathfrak{c}} \rbrace$ be as in Lemma \ref{Lema2c}. For every $\xi < 2^{\mathfrak{c}}$, we denote by $G_{\xi }$ the group generated by $\left\lbrace x_{m, \alpha} \mid m \in \mathbb{N}, \alpha \in J_{\xi } \right\rbrace$. Let $J = \displaystyle\bigcup_{\xi < 2^{\mathfrak{c}}} J_{\xi}$ and $S = \displaystyle\bigcup_{\xi < 2^{\mathfrak{c}}} S_{\xi}$.
    
    We define $G$ as the group generated by $\lbrace x_{m, \eta} \mid m \in \mathbb{N}, \eta \in J \rbrace$. Notice that $G = \displaystyle\bigcup_{\xi < 2^{\mathfrak{c}}} G_{\xi}$. Furthermore, for each $d \in \mathbb{Q}^{(J)} \setminus \lbrace 0 \rbrace$ and $m \in \mathbb{Z} \setminus \lbrace 0 \rbrace$, we have $m.x_{d} \neq 0$. It follows that $G$ is a torsion-free topological group.
    
    We will show that $G^{\alpha}$ is countably compact, for every $\alpha < 2^{\mathfrak{c}}$. In fact, let $\alpha \in [ \omega, 2^{\mathfrak{c}})$. Given an injective sequence $(y_n)_{n \in \omega} \subset G^{\alpha}$, exists $\theta < 2^{\mathfrak{c}}$ such that $(y_n)_{n \in \omega} = \left( x_{g_{\theta, \beta}(n)} \right)_{n \in \omega, \beta < \alpha}$. Whereas $I_{\theta} \subset J$ and $2^{\mathfrak{c}}$ is regular, exists $\eta < 2^{\mathfrak{c}}$ such that $g_{\theta, \alpha} = g_{\mu_{\eta}, \alpha}$. Due to item $(iii)$ of Lemma \ref{Lema2c}, we know that $\left( x_{g_{\mu_{\eta}, \beta}(n)} \right)_{n \in \omega}$ have $p_{\mu_{\eta}}-$limit in $G_{\eta+1} \subset G$, for every $\beta < \alpha$. This is, $(y_n)_{n \in \omega}  = \left( x_{g_{\mu_{\eta}, \beta}(n)} \right)_{n \in \omega, \beta < \alpha}$ have $p_{\mu_{\eta}}-$limit in $G^{\alpha}$. Thus, every sequence of $G^{\alpha}$ have an accumulation point.
    
    On the other hand, we claim that $G^{2^{\mathfrak{c}}}$ is not countably compact. Indeed, because of item $(iv)$ of Lemma \ref{Lema2c}, we have that $G$ is not $q_{\xi}-$compact, for every $\xi < 2^{\mathfrak{c}}$. If $G^{2^{\mathfrak{c}}}$ is countably compact, then exists $\gamma \in 2^{\mathfrak{c}}$ such that $G$ is $q_{\gamma}-$compact, due to Ginsburg and Saks' theorem, but that is false. Therefore, $G^{2^{\mathfrak{c}}}$ is not countably compact. 
    \end{proof}

    \subsection{Case $\alpha < 2^{\mathfrak{c}}$}

    \begin{prop} \label{Propalpha}
    Let $\alpha < 2^{\mathfrak{c}}$ be an infinite cardinal and $\lbrace p_{\xi} \mid \omega \leq \xi < 2^{\mathfrak{c}} \rbrace$ a family of incomparable selective ultrafilters.

    Given a family $\lbrace g_{\xi, \gamma} \mid \omega \leq \xi < 2^{\mathfrak{c}}, \: \omega \leq \gamma < \alpha \rbrace$ of injective sequences of $\displaystyle\bigcup_{\theta \in [\omega, \alpha)} \left( \mathbb{Q}^{(2^{\mathfrak{c}})} \right)^{\theta}$ such that $I_{\xi} = \displaystyle\bigcup \lbrace \supp{g_{\xi, \beta}(n)} \mid \beta < \gamma, \: n \in \omega \rbrace \subset \xi$, \footnote{Here $g_{\xi, \gamma} = ( g_{\xi, \beta} )_{\beta < \gamma}$ is an injective sequence in $\left( \mathbb{Q}^{(2^{\mathfrak{c}})} \right)^{\gamma}$.  } there exists a family $\lbrace A_{\xi} \mid \omega \leq \xi < 2^{\mathfrak{c}} \rbrace$ of disjoint subsets of $2^{\mathfrak{c}} \setminus \alpha$, with $\vert A_{\xi} \vert < \alpha$, and a linearly independent subset $\lbrace x_{m, \beta} \mid m \in \mathbb{N}, \beta \in 2^{\mathfrak{c}} \rbrace$ of $\mathbb{T}^{2^{\mathfrak{c}}}$ such that
        \begin{enumerate}
            \item[i)] $\left( x_{g_{\xi, \beta}(n)} \right)_{n \in \omega}$ have $p_{\xi}-$limit in $\langle \lbrace x_{m, \theta} \mid m \in \mathbb{N}, \theta \in I_{\xi} \cup A_{\xi} \rbrace \rangle$, for each $\beta < \gamma$ and $\xi \in 2^{\mathfrak{c}} \setminus \omega$, 
            \item[ii)] for every $A \subset \alpha$, exists $\beta \in 2^{\mathfrak{c}}$ such that $\lbrace \theta \in \alpha \mid x_{1, \theta}(\beta) = [\pi] \rbrace = A$ and $x_{m, \theta}(\beta) = 0$, for each $m \in \mathbb{N}$ and $\theta \in \alpha \setminus A$,
            \item[iii)] if $p, q \in \omega^*$ are distinct, then for each limit ordinals $\beta_0, \gamma_0, \beta_1, \gamma_1 \in \alpha$, with $\beta_i \neq \gamma_i$ for all $i < 2$, we have
                \begin{itemize}
                    \item $p- \lim\limits_{n \to \omega} x_{1, \beta_0 + n} + x_{1, \gamma_0 + n} \neq q- \lim\limits_{n \to \omega} x_{1, \beta_1 + n} + x_{1, \gamma_1 + n}$,
                    \item $p- \lim\limits_{n \to \omega} x_{1, \beta + n} \neq p- \lim\limits_{n \to \omega} x_{1, \gamma + n}$, if $\beta \neq \gamma$. 
                \end{itemize}
            \end{enumerate}
    \end{prop}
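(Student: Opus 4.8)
The plan is to run the same machine as in the proof of Proposition \ref{Prop2c}, the only genuinely new ingredient being item (iii). First I would, for each $\xi\in[\omega,2^{\mathfrak c})$, pass to the subspace $\langle\{[g_{\xi,\beta}]_{p_\xi}\mid\beta<\gamma\}\cup\{[\chi_{\overrightarrow{\mu}}]_{p_\xi}\mid\mu\in I_\xi\}\rangle$ of $Ult_{p_\xi}(\mathbb Q^{(2^{\mathfrak c})})$ and extract a basis indexed by some $B_\xi\subseteq\gamma$ together with the $[\chi_{\overrightarrow{\mu}}]_{p_\xi}$, $\mu\in I_\xi$; since $\gamma<\alpha$ this gives $|B_\xi|<\alpha$. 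Re-indexing $\{g_{\xi,\beta}\mid\beta\in B_\xi\}$ as $\{f_\theta\mid\theta\in A_\xi\}$ with the $A_\xi$ pairwise disjoint, contained in $2^{\mathfrak c}\setminus\alpha$, of size $<\alpha$, and placed high enough that $\bigcup_n\supp f_\theta(n)\subseteq\theta$, reproduces exactly the hypotheses of Theorem \ref{homo} with $\kappa=2^{\mathfrak c}$ (which is legitimate since $(2^{\mathfrak c})^\omega=2^{\mathfrak c}$). The transfinite placement of the $A_\xi$ is routine bookkeeping because $|\bigcup_\xi A_\xi|\le 2^{\mathfrak c}$ and each block must merely sit above finitely/${<}\alpha$-many prescribed supports.

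Applying Theorem \ref{homo}, for each non-zero $d$ I obtain $\phi_d$ with $\phi_d(d)\neq0$ and the prescribed $p_\xi$-limit behaviour on the $f_\theta$, and for each $A\subseteq\alpha$ I obtain $\psi_A$ with $\{\beta\in\alpha\mid\psi_A(\chi_\beta)=[\pi]\}=A$, $\psi_A\mid_{\mathbb Q^{(\alpha\setminus A)}}=0$, and the same $p_\xi$-limit behaviour. Invoking $2^{<2^{\mathfrak c}}=2^{\mathfrak c}$ (hence $2^\alpha\le2^{\mathfrak c}$), the family $\{\phi_d\mid d\neq0\}\cup\{\psi_A\mid A\subseteq\alpha\}$ has size $2^{\mathfrak c}$; I enumerate it as $\{h_\lambda\mid\lambda<2^{\mathfrak c}\}$, put $\Phi(d)=(h_\lambda(d))_{\lambda<2^{\mathfrak c}}$ and $x_{m,\beta}=\Phi(\tfrac1m\chi_\beta)$. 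The $\phi_d$-coordinates make $\Phi$ a monomorphism, which yields the linear independence of $\{x_{m,\beta}\}$; item (ii) is read off verbatim from the $\psi_A$-coordinate. For (i), every coordinate $h$ satisfies $p_\xi\text{-}\lim_n h(\tfrac1N f_\theta(n))=h(\tfrac1N\chi_\theta)$ for $\theta\in A_\xi$, so applying Proposition \ref{plim} to the subspace $M$ spanned by $\{g_{\xi,\beta}\mid\beta<\gamma\}\cup\{\chi_{\overrightarrow{\mu}}\mid\mu\in I_\xi\}$ with the chosen basis places each $p_\xi$-limit of $(x_{g_{\xi,\beta}(n)})_n$ inside $\langle\{x_{m,\theta}\mid\theta\in I_\xi\cup A_\xi\}\rangle$.

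The step I expect to be the crux is (iii), since it demands \emph{non-convergent}, genuinely $p$-dependent separations of the limit points while every coordinate must keep respecting the $p_\xi$-structure used for (i). The observation that unlocks it is that the terms of $(\chi_{\beta+n})_n$ are characteristic functions of the pairwise distinct ordinals $\beta+n<\alpha$, and that for distinct limit ordinals $\theta\neq\theta'$ the blocks $[\theta,\theta+\omega)$ and $[\theta',\theta'+\omega)$ are disjoint; hence the already-available $\psi_A$-coordinates give complete freedom to prescribe $\psi_A(\chi_{\theta+n})\in\{0,[\pi]\}$ independently along each block, without disturbing the $p_\xi$-limit behaviour. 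Because $p$-limits in $\mathbb T^{2^{\mathfrak c}}$ are computed coordinatewise, it suffices, for a fixed configuration $(p,q,\beta_0,\gamma_0,\beta_1,\gamma_1)$ with $p\neq q$ and $\beta_i\neq\gamma_i$, to exhibit a single $A\subseteq\alpha$ separating the two scalar limits in the $\psi_A$-coordinate.

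Here I would split into two cases. If $\{\beta_0,\gamma_0\}\neq\{\beta_1,\gamma_1\}$, choose $\theta^\ast$ in the symmetric difference, say $\theta^\ast\in\{\beta_0,\gamma_0\}\setminus\{\beta_1,\gamma_1\}$, and take $A=\{\theta^\ast+n\mid n\in\omega\}$; then $\psi_A$ is $[\pi]$ along the $\theta^\ast$-block and $0$ along the other three blocks, so the left limit equals $[\pi]$ and the right limit equals $0$. If $\{\beta_0,\gamma_0\}=\{\beta_1,\gamma_1\}$, pick $P\in p\setminus q$ (so $\omega\setminus P\in q$) and take $A=\{\beta_0+n\mid n\in P\}$, leaving the $\gamma_0$-block out; the common sum-sequence then equals $[\pi]$ for $n\in P$ and $0$ otherwise, whence the left ($p$-)limit is $[\pi]$ and the right ($q$-)limit is $0$. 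The same device with $A=\{\beta+n\mid n\in\omega\}$ settles the second bullet, $p\text{-}\lim_n x_{1,\beta+n}\neq p\text{-}\lim_n x_{1,\gamma+n}$ for distinct limit ordinals $\beta,\gamma$. Thus every instance of (iii) is witnessed in a suitable $\psi_A$-coordinate, and the proof is complete.
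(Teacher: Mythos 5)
Your construction is correct and, through item (ii), is essentially the paper's own proof: extract a basis $B_\xi$ of $\langle\lbrace[g_{\xi,\beta}]_{p_\xi}\mid\beta<\gamma_\xi\rbrace\cup\lbrace[\chi_{\overrightarrow{\mu}}]_{p_\xi}\mid\mu\in I_\xi\rbrace\rangle$, re-index it as $\lbrace f_\theta\mid\theta\in A_\xi\rbrace$ with the $A_\xi$ pairwise disjoint and placed high in $2^{\mathfrak c}\setminus\alpha$, apply Theorem \ref{homo} to obtain the $\phi_d$ and the $\psi_A$, diagonalize into the monomorphism $\Phi:\mathbb Q^{(2^{\mathfrak c})}\to\mathbb T^{2^{\mathfrak c}}$, and read (i) off Proposition \ref{plim} and (ii) off the $\psi_A$-coordinates. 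Where you genuinely diverge is item (iii), and there your argument is the more careful one. The paper normalizes $\beta_0<\gamma_0$, $\beta_1<\gamma_1$ and uses the single witness $A=[0,\beta_0)\cup[\gamma_0+\omega,\alpha)\cup[\beta_1,\gamma_1)$, asserting that this coordinate separates the two limits; that assertion fails when $\lbrace\beta_0,\gamma_0\rbrace=\lbrace\beta_1,\gamma_1\rbrace$ (the two sequences are then identical and the chosen coordinate is constantly $[\pi]$ on both sides, so the two limits agree there), and it can also fail for certain interleavings of distinct pairs (e.g.\ $\beta_1<\beta_0<\gamma_0$ with $\gamma_1=\gamma_0+\omega$ forces $A=\alpha$, so both sums are constantly $[2\pi]$). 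Your case split supplies exactly what is missing: when the pairs differ as sets, a block $[\theta^\ast,\theta^\ast+\omega)$ of the symmetric difference yields constant coordinate values $[\pi]$ versus $0$; when the pairs coincide, only the hypothesis $p\neq q$ can do any work, and your choice $A=\lbrace\beta_0+n\mid n\in P\rbrace$ with $P\in p\setminus q$ makes the common coordinate sequence equal to $[\pi]$ on $P$ and $0$ off $P$, so the $p$-limit is $[\pi]$ while the $q$-limit is $0$; the same device disposes of the second bullet. Since $p$-limits in $\mathbb T^{2^{\mathfrak c}}$ are computed coordinatewise and always exist by compactness, each instance of (iii) is indeed witnessed in a single $\psi_A$-coordinate. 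In short, your proposal proves the proposition by the same global scheme but with a repaired, case-complete argument for (iii), closing a genuine gap in the paper's own treatment of that item.
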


    \begin{proof}
    Let $\lbrace g_{\xi, \gamma} \mid \omega \leq \xi < 2^{\mathfrak{c}}, \: \omega \leq \gamma < \alpha \rbrace$ be a family of injective sequences of $\displaystyle\bigcup_{\theta \in [\omega, \alpha)} \left( \mathbb{Q}^{(2^{\mathfrak{c}})} \right)^{\theta}$ such that $I_{\xi} = \displaystyle\bigcup \lbrace \supp{g_{\xi, \beta}(n)} \mid \beta < \gamma, \: n \in \omega \rbrace \subset \xi$.
    
    Consider the linear subspace $\langle \lbrace [ g_{\omega, \beta} ]_{p_{\omega}} \mid \beta < \gamma_{\omega}  \rbrace \cup \lbrace [ \chi_{\overrightarrow{\mu}} ]_{p_{\omega}} \mid \mu \in I_{\omega} \rbrace \rangle$. So, exists $B_{\omega} \subset \gamma_{\omega}$ such that $\lbrace [ g_{\omega, \beta} ]_{p_{\omega}} \mid \beta \in B_{\omega}  \rbrace \cup \lbrace [ \chi_{\overrightarrow{\mu}} ]_{p_{\omega}} \mid \mu \in I_{\omega} \rbrace$ is a basis. Take $A_{\omega} \subset 2^{\mathfrak c} \setminus \alpha$ such that $\lbrace f_{\theta} \mid \theta \in A_{\omega} \rbrace$ be a indexation of $\lbrace g_{\omega, \beta} \mid \beta \in B_{\omega} \rbrace$. Thereby, $\lbrace [f_{\theta}]_{p_\omega} \mid \theta \in A_{\omega} \rbrace \cup \lbrace [ \chi_{\overrightarrow{\mu}} ]_{p_{\omega}} \mid \mu \in 2^{\mathfrak c} \rbrace$ is linearly independent. 

    Given $\xi \in [\omega, 2^{\mathfrak{c}})$, assume that we have $\lbrace A_{\mu} \mid \omega \leq \mu < \xi \rbrace$. Let $\delta_{\xi} \in 2^{\mathfrak{c}}$ such that $\displaystyle\bigcup_{\mu \in [\omega, \xi)} A_{\mu} \subset \delta_{\xi}$. In the same way, exists $B_{\xi} \subset \gamma_{\xi}$ such that $\lbrace [ g_{\xi, \beta} ]_{p_{\xi}} \mid \beta \in B_{\xi}  \rbrace \cup \lbrace [ \chi_{\overrightarrow{\mu}} ]_{p_{\xi}} \mid \mu \in I_{\xi} \rbrace$ is a basis of the  linear subspace $\langle \lbrace [ g_{\xi, \beta} ]_{p_{\xi}} \mid \beta < \gamma_{\xi}  \rbrace \cup \lbrace [ \chi_{\overrightarrow{\mu}} ]_{p_{\xi}} \mid \mu \in I_{\xi} \rbrace \rangle$. Take $A_{\xi} \subset 2^{\mathfrak c} \setminus \delta_{\xi}$ such that $\lbrace f_{\theta} \mid \theta \in A_{\xi} \rbrace$ be a indexation of $\lbrace g_{\xi, \beta} \mid \beta \in B_{\xi}  \rbrace$. Thereby, $\lbrace [f_{\theta}]_{p_\xi} \mid \theta \in A_{\xi} \rbrace \cup \lbrace [ \chi_{\overrightarrow{\mu}} ]_{p_{\xi}} \mid \mu \in 2^{\mathfrak c} \rbrace$ is linearly independent.
    
    In virtue of Theorem \ref{homo}, for every $d \in \mathbb{Q}^{(2^{\mathfrak{c}})}$ non-zero, exists a homomorphism $\phi_{d} : \mathbb{Q}^{(2^{\mathfrak{c}})} \to \mathbb{T}$ such that $\phi_d(d) \neq 0$ and $p_{\xi}- \lim\limits_{n \to \infty} \phi_d \left( \frac{1}{N} f_{\theta}(n) \right) = \phi_d \left( \frac{1}{N} \chi_{\theta} \right)$, for each $N \in \mathbb{N}$, $\theta \in A_{\xi}$ and $\xi \in [\omega, 2^{\mathfrak{c}})$.
    
    Furthermore, for every $A \subset \alpha$, exists a homomorphism $\psi_A : \mathbb{Q}^{(2^{\mathfrak{c}})} \to \mathbb{T}$ such that $\lbrace \beta \in \alpha \mid \psi_A \left( \chi_{\beta} \right) = [\pi] \rbrace = A$, $\psi_A \mid_{\mathbb{Q}^{(\alpha \setminus A)}} = 0$, and $p_{\xi}- \lim\limits_{n \to \infty} \psi_A \left( \frac{1}{N} f_{\theta}(n) \right) = \psi_A \left( \frac{1}{N} \chi_{\theta} \right)$, for each $N \in \mathbb{N}$, $\theta \in A_{\xi}$ and $\xi \in [\omega, 2^{\mathfrak{c}})$.

    Fix an enumeration $\lbrace h_{\beta} \mid \beta \in 2^{\mathfrak{c}} \rbrace$ of $\lbrace \phi_{d} \mid d \in \mathbb{Q}^{(2^{\mathfrak{c}})}  \setminus \lbrace 0 \rbrace \rbrace \cup \lbrace \psi_{A} \mid A \subset \alpha \rbrace$. We define a monomorphism $\Phi : \mathbb{Q}^{(2^{\mathfrak{c}})} \to \mathbb{T}^{2^{\mathfrak{c}}}$ by $\Phi(f) = (h_{\beta}(f))_{\beta \in 2^{\mathfrak{c}}}$. For every $N \in \mathbb{N}$ and $\xi \in 2^{\mathfrak{c}}$, let $x_{N, \xi} = \Phi \left( \frac{1}{N} \chi_{\xi} \right)$. Is clear that $\lbrace x_{N, \xi} \mid N \in \mathbb{N}, \xi \in 2^{\mathfrak{c}} \rbrace$ is linearly independent.
    
    Let $\xi \in [\omega, 2^{\mathfrak{c}})$. We know that $p_{\xi}- \lim\limits_{k \to \infty} \Phi \left( \frac{1}{N} f_{\beta}(k) \right) = \Phi \left( \frac{1}{N} \chi_{\beta} \right) = x_{N, \beta}$, for every $N \in \mathbb{N}$ and $\beta \in A_{\xi}$. Due to Proposition \ref{plim}, $\left( x_{g_{\xi, \beta}(n)} \right)_{n \in \omega}$ have $p_{\xi}-$limit in $\langle \lbrace x_{m, \theta} \mid m \in \mathbb{N},\theta \in I_{\xi} \cup A_{\xi} \rbrace  \rangle$, for every $\beta < \gamma$. This is, the item $(i)$ is true.
    
    About item $(ii)$, given $A \subset \alpha$, exists $\beta \in 2^{\mathfrak{c}}$ such that $\psi_A = h_{\beta}$. Then $\lbrace \theta \in \alpha \mid x_{1, \theta}(\beta) = [\pi] \rbrace = A$ and $x_{m, \theta}(\beta) = 0$, for each $m \in \mathbb{N}$ and $\theta \in \alpha \setminus A$. 
    
    Finally, about item $(iii)$, let $p, q \in \omega^*$ distinct. Let $\beta_0, \gamma_0, \beta_1, \gamma_1 \in \alpha$ be limit ordinals, with $\beta_i \neq \gamma_i$ for all $i < 2$. Without lose of generality, consider $\beta_0 < \gamma_0$ and $\beta_1 < \gamma_1$. So, applying $(ii)$, exists $\theta \in 2^{\mathfrak{c}}$ such that $\lbrace \eta \in \alpha \mid x_{1, \eta}(\theta) = [\pi] \rbrace = [0, \beta_0) \cup [\gamma_0 + \omega,\alpha) \cup [\beta_1, \gamma_1)$. As a consequence, $x_{1, \beta_1 + n}(\theta) = [\pi]$ and $x_{1, \beta_0 + n}(\theta) = x_{1, \gamma_0 + n}(\theta)  = x_{1, \gamma_1 + n}(\theta) = 0$, for every $n \in \omega$. That is, $p- \lim\limits_{n \to \omega} x_{1, \beta_0 + n} + x_{1, \gamma_0 + n} \neq q- \lim\limits_{n \to \omega} x_{1, \beta_1 + n} + x_{1, \gamma_1 + n}$.
    
    If $\beta \neq \gamma$, exists $\theta \in 2^{\mathfrak{c}}$ such that $\lbrace \eta \in \alpha \mid x_{1, \eta}(\theta) = [\pi] \rbrace = [\beta, \gamma)$. So, $x_{1, \beta + n}(\theta) = [\pi]$ and $x_{1, \gamma + n}(\theta) = 0$, for every $n \in \omega$. Thereby, $p- \lim\limits_{n \to \omega} x_{1, \beta + n} \neq p- \lim\limits_{n \to \omega} x_{1, \gamma + n}$.
    \end{proof}

    \begin{lem} \label{Lemaalpha}
    Let $\alpha < 2^{\mathfrak{c}}$ be an infinite cardinal, $\lbrace p_{\xi} \mid \omega \leq \xi < 2^{\mathfrak{c}} \rbrace$ a family of incomparable selective ultrafilters and $\lbrace g_{\xi, \gamma} \mid \omega \leq \xi < 2^{\mathfrak{c}}, \: \omega \leq \gamma < \alpha \rbrace$ an enumeration of the injective sequences of $\displaystyle\bigcup_{\theta \in [\omega, \alpha)} \left( \mathbb{Q}^{(2^{\mathfrak{c}})} \right)^{\theta}$ such that
        \begin{itemize}
            \item $I_{\xi} = \displaystyle\bigcup \lbrace \supp{g_{\xi, \beta}(n)} \mid \beta < \gamma, \: n \in \omega \rbrace \subset \xi$,
            \item each $g_{\xi}$ appears $2^{\mathfrak{c}}$ times in the enumeration.
        \end{itemize}
    Consider $\lbrace A_{\xi} \mid \omega \leq \xi < 2^{\mathfrak{c}} \rbrace$, $\lbrace x_{m, \beta} \mid m \in \mathbb{N}, \beta \in 2^{\mathfrak{c}} \rbrace$ as in Proposition \ref{Propalpha} and let $\mathcal{P} = \lbrace p \in \omega^{*} \mid p- \lim\limits_{n \to \infty} x_{1, \theta + n} \in \langle \lbrace x_{m, \eta} \mid m \in \mathbb{N}, \eta \in 2^{\mathfrak{c}}  \rbrace  \rangle, \textrm{for every } \theta < \alpha \textrm{ limit} \rbrace$. Then, there exists increasing sequences $\lbrace K_{\xi} \mid \xi \in 2^{\mathfrak{c}} \rbrace \subset [2^{\mathfrak{c}}]^{< 2^{\mathfrak{c}}}$, $\lbrace P_{\xi} \mid \xi \in 2^{\mathfrak{c}} \rbrace \subset \mathcal{P}$, $\lbrace S_{\xi} \mid \xi \in 2^{\mathfrak{c}}  \rbrace \subset [2^{\mathfrak{c}} \setminus \alpha]^{< 2^{\mathfrak{c}}}$ and $\lbrace \lambda_{\xi} \mid \xi \in 2^{\mathfrak{c}} \rbrace \subset 2^{\mathfrak{c}} \setminus \omega$ satisfying
        \begin{enumerate}
            \item[i)] $p \in P_{\xi}$ if, and only if, $p \in \mathcal P$  and exists $\beta_1 < \beta_2 < \alpha$ limits such that $$p- \lim\limits_{n \to \infty} x_{1, \beta_1+n} - x_{1, \beta_2+n} \in \left\langle \left\lbrace x_{m, \eta} \mid m \in \mathbb{N}, \eta \in K_{\xi} \right\rbrace \right\rangle,$$
            \item[ii)] For every $p \in P_{\xi}$, exists $\beta < \alpha$ limit such that $$p- \lim\limits_{n \to \infty} x_{1, \beta + n} \notin  \left\langle  \lbrace x_{m, \eta} \mid m \in \mathbb{N}, \eta \in 2^{\mathfrak{c}} \setminus S_{\xi} \rbrace \right\rangle,$$
            \item[iii)] $g_{\lambda_{\xi}} = g_{\delta_{\xi}}$, where $\delta_{\xi} \in 2^{\mathfrak{c}} \setminus \omega$ is minimal such that $I_{\delta_{\xi}} \subset K_{\xi}$ and $g_{\delta_{\xi}} \neq g_{\lambda_{\eta}}$, for every $\eta < \xi$,
            \item[iv)] $p_{\lambda_{\xi}}- \lim\limits_{n \to \infty} x_{g_{\lambda_{\xi}, \beta}(n)} \in \left\langle \left\lbrace x_{m, \eta} \mid m \in \mathbb{N}, \eta \in K_{\xi+1} \right\rbrace \right\rangle$, for every $\beta < \gamma$,
            \item[v)] $\vert K_{\xi} \vert \leq \vert \xi \vert + \alpha$, $\vert P_{\xi} \vert \leq \vert \xi \vert + \alpha$ and $\vert S_{\xi} \vert \leq \vert \xi \vert + \alpha$,
            \item[vi)] $K_{\xi} \cap S_{\xi} = \emptyset$,
            \item[vii)] If $\xi$ is an ordinal limit, we have $P_{\xi} = \displaystyle\bigcup_{\gamma < \xi} P_{\gamma}$ and $S_{\xi} = \displaystyle\bigcup_{\gamma < \xi} S_{\gamma}$.  
        \end{enumerate}
    \end{lem}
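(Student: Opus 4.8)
The plan is to build the four sequences by a single transfinite recursion of length $2^{\mathfrak{c}}$, in direct analogy with Lemma \ref{Lema2c}. At a limit stage $\xi$ I would simply set $K_\xi=\bigcup_{\gamma<\xi}K_\gamma$, $P_\xi=\bigcup_{\gamma<\xi}P_\gamma$ and $S_\xi=\bigcup_{\gamma<\xi}S_\gamma$, which is forced by $(vii)$; the only thing to check is that the defining properties pass to unions. Throughout, $(i)$ is to be read as a \emph{definition} of $P_\xi$ from the already-constructed $K_\xi$, namely $P_\xi=\{p\in\mathcal P:\exists\ \beta_1<\beta_2<\alpha\ \text{limit},\ p\text{-}\lim_n(x_{1,\beta_1+n}-x_{1,\beta_2+n})\in\langle\{x_{m,\eta}:\eta\in K_\xi\}\rangle\}$. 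Since $K_\xi$ is increasing, $P_\xi$ is automatically increasing, and because every element of a group has finite generator-support, an element of $\langle\{x_{m,\eta}:\eta\in\bigcup_{\gamma<\xi}K_\gamma\}\rangle$ already lies in some $\langle\{x_{m,\eta}:\eta\in K_\gamma\}\rangle$; hence the definition of $P_\xi$ is consistent with $P_\xi=\bigcup_{\gamma<\xi}P_\gamma$ at limits, reconciling $(i)$ with $(vii)$.

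For the cardinality bound $(v)$ on $P_\xi$ I would first prove that, for fixed distinct limits $\beta_1\neq\beta_2<\alpha$, the map $\omega^*\to\mathbb T^{2^{\mathfrak c}}$, $p\mapsto p\text{-}\lim_n(x_{1,\beta_1+n}-x_{1,\beta_2+n})$, is injective. Indeed, if $p\neq q$ pick $X\in p\setminus q$ and apply Proposition \ref{Propalpha}$(ii)$ to $A=\{\beta_1+n:n\in X\}$ (the blocks $[\beta_1,\beta_1+\omega)$ and $[\beta_2,\beta_2+\omega)$ being disjoint, $\beta_2+n\notin A$); at the resulting coordinate the two limits take the values $[\pi]$ and $0$, so they differ. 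Consequently, for each of the at most $\alpha$ admissible pairs the set of witnessing ultrafilters injects into $\langle\{x_{m,\eta}:\eta\in K_\xi\}\rangle$, whose cardinality is $|K_\xi|+\aleph_0$, giving $|P_\xi|\le\alpha\cdot(|K_\xi|+\aleph_0)\le|\xi|+\alpha$. The analogous bounds on $K_\xi$ and $S_\xi$ in $(v)$ follow from the fact that at each successor step only a set of size $<\alpha$ (an $A_{\lambda}$) and a set of size $\le|P_\xi|$ are adjoined.

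The diagonalization giving $(iii)$ and $(iv)$ is handled exactly as in Lemma \ref{Lema2c}. Having $K_\xi$, let $\delta_\xi\in 2^{\mathfrak c}\setminus\omega$ be least with $I_{\delta_\xi}\subset K_\xi$ and $g_{\delta_\xi}\neq g_{\lambda_\eta}$ for all $\eta<\xi$; since every value of $g$ occurs $2^{\mathfrak c}$ times and $2^{\mathfrak c}$ is regular, choose $\lambda_\xi$ with $g_{\lambda_\xi}=g_{\delta_\xi}$ large enough that its block $A_{\lambda_\xi}$ lies beyond $\sup(K_\xi\cup S_\xi)$. Then $I_{\lambda_\xi}=I_{\delta_\xi}\subset K_\xi$, and setting $K_{\xi+1}=K_\xi\cup A_{\lambda_\xi}$, Proposition \ref{Propalpha}$(i)$ places $p_{\lambda_\xi}\text{-}\lim_n x_{g_{\lambda_\xi,\beta}(n)}$ in $\langle\{x_{m,\eta}:\eta\in I_{\lambda_\xi}\cup A_{\lambda_\xi}\}\rangle\subseteq\langle\{x_{m,\eta}:\eta\in K_{\xi+1}\}\rangle$, which is $(iv)$. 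Choosing each new $A_{\lambda_\xi}$ disjoint from the current $S$ and, dually, each new witness of $S_\xi$ (below) disjoint from the current $K$, the two families stay globally disjoint, securing $(vi)$.

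The main obstacle is $(ii)$, the construction of $S_\xi$, and I expect it to be the only delicate point. For each $p\in P_\xi$ I must exhibit a limit $\beta<\alpha$ together with a \emph{generator} index $\theta\in 2^{\mathfrak c}\setminus(\alpha\cup K_\xi)$ lying in the (finite, by $p\in\mathcal P$) generator-support of $p\text{-}\lim_n x_{1,\beta+n}$; placing such $\theta$ into $S_\xi$ then forces $p\text{-}\lim_n x_{1,\beta+n}\notin\langle\{x_{m,\eta}:\eta\in 2^{\mathfrak c}\setminus S_\xi\}\rangle$, because linear independence of $\{x_{m,\eta}\}$ makes that support unique. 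The hard part is guaranteeing that \emph{some} limit $\beta$ has its $p$-limit using a support index outside $\alpha\cup K_\xi$: a naive cardinality count fails, since $\alpha<2^{\mathfrak c}$ many distinct limits (distinct by Proposition \ref{Propalpha}$(iii)$, second bullet) can in principle sit inside a group of size $<2^{\mathfrak c}$. I would extract the escaping coordinate from the separating coordinates $\gamma_A$ of Proposition \ref{Propalpha}$(ii)$ and the rigidity of $(iii)$: writing a hypothetical support inside $\alpha\cup K_\xi$ and evaluating at the coordinates $\gamma_{[\beta,\beta+\omega)}$ (which read $[\pi]$ on the $\beta$-block and $0$ elsewhere) overdetermines the finitely many coefficients across varying $A$, contradicting linear independence unless a high fresh index appears. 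Once the witnesses are selected, $|P_\xi|<2^{\mathfrak c}$ keeps $S_\xi$ of size $\le|\xi|+\alpha$, and the regularity of $2^{\mathfrak c}$ leaves room to keep $S_\xi\subseteq 2^{\mathfrak c}\setminus\alpha$ disjoint from $K_\xi$, completing $(ii)$, $(v)$ and $(vi)$; verifying $(ii)$ at limit stages is immediate since any $p\in P_\xi=\bigcup_{\gamma<\xi}P_\gamma$ already has a witness in some $S_\gamma\subseteq S_\xi$.
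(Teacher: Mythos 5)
Your overall architecture coincides with the paper's: the same transfinite recursion, item (i) read as the \emph{definition} of $P_\xi$, unions at limit stages, the choice of $\lambda_\xi$ with $A_{\lambda_\xi}$ disjoint from the current $S$ to secure (iii), (iv), (vi), and essentially the same counting for (v) (the paper gets the injectivity of $p\mapsto p-\lim\limits_{n\to\infty} (x_{1,\beta_1+n}-x_{1,\beta_2+n})$ from Proposition \ref{Propalpha}(iii) rather than re-deriving it from (ii), but both are fine). The genuine gap is exactly the point you flagged as delicate: producing, for each $p$ that enters $P$ at a successor stage, a limit $\beta<\alpha$ whose $p$-limit $x_{d_\beta}$ has a support index outside the current $K$. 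Your proposed argument --- evaluating a hypothetical support at the coordinates given by Proposition \ref{Propalpha}(ii) --- cannot be completed. Those coordinates are the homomorphisms $\psi_A$, which are only prescribed on generators indexed by $\alpha$; on generators indexed in $K_{\xi+1}\setminus\alpha$ (the blocks $A_{\lambda_\nu}$ already absorbed into $K$) their values are whatever the extension process of Theorem \ref{homo} forces, and are uncontrolled. So evaluating at $\gamma_{[\beta,\beta+\omega)}$ only rules out supports contained in $\alpha\setminus[\beta,\beta+\omega)$; it cannot detect whether a support inside $K_{\xi+1}$ is possible. (Indeed, your idea does prove the paper's stage-zero fact that $P_0=\emptyset$, where $K_0=\alpha$ and every candidate support lies inside $\alpha$ --- but it breaks down at every later stage.) Moreover your sketch never uses that $p$ is \emph{new}, i.e.\ $p\notin P_\xi$, and some such input is indispensable: for an ultrafilter already in $P_\xi$ the desired conclusion relative to $K_{\xi+1}$ only survives because its old witness $\theta_p$ was placed into $S$ and all later $A_\lambda$'s are chosen to avoid $S$; without either the newness or that invariant there is nothing to prevent the supports of all the limits $x_{d_\beta}$ from being swallowed by $K_{\xi+1}$.

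The paper's actual mechanism is a pigeonhole argument tailored to the definition of $P_\xi$ by \emph{differences}. Let $p\in P_{\mu+1}\setminus P_\mu$ and suppose toward a contradiction that for every limit $\gamma<\alpha$ one has $p-\lim\limits_{n\to\infty}x_{1,\gamma+n}=x_{f_\gamma}+x_{t_\gamma}$ with $f_\gamma\in\mathbb{Q}^{(K_\mu)}$ and $t_\gamma\in\mathbb{Q}^{(A_{\lambda_\mu})}$ (recall $K_{\mu+1}=K_\mu\cup A_{\lambda_\mu}$, a disjoint union). Since $\vert A_{\lambda_\mu}\vert<\alpha$ --- this is precisely why Proposition \ref{Propalpha} insists on $\vert A_\xi\vert<\alpha$ --- we have $\big\vert\mathbb{Q}^{(A_{\lambda_\mu})}\big\vert<\alpha$, while there are $\alpha$ many limit ordinals below $\alpha$; hence there are limits $\beta_1<\beta_2$ with $t_{\beta_1}=t_{\beta_2}$. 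Then $p-\lim\limits_{n\to\infty}\left(x_{1,\beta_1+n}-x_{1,\beta_2+n}\right)=x_{f_{\beta_1}}-x_{f_{\beta_2}}\in\left\langle\left\lbrace x_{m,\eta}\mid m\in\mathbb{N},\eta\in K_\mu\right\rbrace\right\rangle$, i.e.\ $p\in P_\mu$, a contradiction. This is the step your proposal is missing: the differences cancel the common $A_{\lambda_\mu}$-part, which is exactly why $P_\xi$ is defined through pairs $x_{1,\beta_1+n}-x_{1,\beta_2+n}$ rather than single sequences. Once this claim is in hand, one picks $\theta_p\in\supp{d_\beta}\setminus K_{\mu+1}$, puts it into $S_{\mu+1}$, and your protection-by-disjointness mechanism (which matches the paper) carries (ii) through all later stages.
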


    \begin{proof}
    Define $K_0 = \alpha$. We will show that $P_0 = \emptyset$. In fact, let $t \in \mathbb{Q}^{(\alpha)}$. By item $(ii)$ of Proposition \ref{Propalpha}, for each $\beta_1 < \beta_2 < \alpha$ limits, exists $\mu \in 2^{\mathfrak{c}}$ such that $\lbrace \eta < \alpha \mid x_{1, \eta}(\mu) = [\pi]  \rbrace = [\beta_1, \beta_2) \setminus \supp{t}$ and $x_{t}(\mu) = 0$. So, $x_t$ is not an accumulation point of $\left( x_{1, \beta_1 + n} - x_{1, \beta_2+n} \right)_{n \in \omega}$. Put $S_0 = \emptyset$, $\lambda_0 = \omega$ and $K_1 = K_0 \cup A_{\lambda_0}$, we obtain the desire conditions. 
    
    Given $\xi < 2^{\mathfrak{c}}$, assume that we have $\lbrace K_{\beta} \mid \beta < \xi \rbrace$, $\lbrace S_{\beta} \mid \beta < \xi \rbrace$, $\lbrace P_{\theta} \mid \theta < \eta < \xi \rbrace$ and $\lbrace \lambda_{\theta} \mid \theta < \eta < \xi \rbrace$.
    
    In case $\xi$ is not limit, exists $\mu \in 2^{\mathfrak{c}}$ such that $\xi = \mu +1$. Take $\delta_{\mu} \in 2^{\mathfrak{c}} \setminus \omega$ minimal such that $I_{\delta_{\mu}} \subset K_{\mu}$ and $g_{\delta_{\mu}} \neq g_{\lambda_{\eta}}$, for every $\eta < \mu$. So, exists $\lambda_{\mu} \in 2^{\mathfrak{c}} \setminus \omega$ such that $g_{\lambda_{\mu}} = g_{\delta_{\mu}}$ and $A_{\lambda_{\mu}} \cap S_{\mu} = \emptyset$. Due to item $(i)$ of Proposition \ref{Propalpha}, we know that $p_{\lambda_{\mu}}- \lim\limits_{n \to \infty} x_{g_{\lambda_{\mu}, \beta}(n)} \in \left\langle \left\lbrace x_{m, \eta} \mid m \in \mathbb{N}, \eta \in I_{\lambda_{\mu}} \cup A_{\lambda_{\mu}} \right\rbrace \right\rangle$, for every $\beta < \gamma$. We define $K_{\mu +1} = K_{\mu} \cup A_{\lambda_{\mu}} = \alpha \cup \bigcup_{\nu < \mu+1} A_{\lambda_{\nu}}$. As a consequence, the items $(iii)$ and $(iv)$ are satisfied. Furthermore, $\vert K_{\xi} \vert \leq \alpha + \vert \xi \vert$. 
    
    Consider $P_{\xi}$ defined as in $(i)$. For every $p \in P_{\xi}$, there exists $\beta_1 < \beta_2 < \alpha$ such that $x_p = p - \lim\limits_{n \to \omega} x_{1, \beta_1+n} - x_{1, \beta_2+n} \in \left\langle \left\lbrace x_{m, \eta} \mid m \in \mathbb{N}, \eta \in K_{\xi} \right\rbrace \right\rangle$. By the item $(iii)$ of Proposition \ref{Propalpha}, $x_p \neq x_q$, if $p, q \in P_{\xi}$ are distinct. In that way, $\vert P_{\xi} \vert \leq \vert K_{\xi} \vert \leq \alpha + \vert \xi \vert$.
    
    We claim that, given $p \in P_{\mu+1} \setminus P_{\mu}$, exists $\beta < \alpha$ limit such that $p - \lim\limits_{n \to \infty} x_{1, \beta +n} = x_{d_{\beta}} \notin \left\langle \left\lbrace x_{m, \eta} \mid m \in \mathbb{N}, \eta \in K_{\mu+1} \right\rbrace \right\rangle$. In fact, if is not true, then for every $\gamma < \alpha$ limit, there exists $f_{\gamma} \in \mathbb{Q}^{(K_{\mu})}$ and $t_{\gamma} \in \mathbb{Q}^{(A_{\lambda_{\mu}})} \setminus \lbrace 0  \rbrace$ such that $p- \lim\limits_{n \to \infty} x_{1, \gamma + n} = x_{f_{\gamma}} + x_{t_{\gamma}}$. Whereas $\big\vert \mathbb{Q}^{(A_{\lambda_{\mu}})} \big\vert < \alpha$, there exists $\beta_1 < \beta_2 < \alpha$ such that $t_{\beta_1} = t_{\beta_2}$. So, $p - \lim\limits_{n \to \infty} x_{1, \beta_1 +n} - x_{1, \beta_2+n} = x_{f_{\beta_1}} - x_{f_{\beta_2}} \in \left\langle \left\lbrace x_{m, \eta} \mid m \in \mathbb{N}, \eta \in  K_{\mu} \right\rbrace \right\rangle$, but that is false because $p \notin P_{\mu}$.
    
    Now fix $\theta_p \in \supp{d_{\beta}} \setminus K_{\mu+1}$ and define $S_{\mu+1} = S_{\mu} \cup \lbrace \theta_{p} \mid p \in P_{\mu+1} \setminus P_{\mu} \rbrace$. Thereby, $(ii)$, $(v)$ and $(vi)$ are true. 
    
    In case $\xi$ be limit, we define $K_{\xi} = \alpha \cup \bigcup_{\nu < \xi} A_{\lambda_{\nu}}$, $S_{\xi} = \displaystyle\bigcup_{\mu < \xi} S_{\mu}$ and $P_{\xi} = \displaystyle\bigcup_{\mu < \xi} P_{\mu}$. Let $\delta_{\xi} \in 2^{\mathfrak{c}} \setminus \omega$ be minimal such that $I_{\delta_{\xi}} \subset K_{\xi}$ and $g_{\delta_{\xi}} \neq g_{\lambda_{\eta}}$, for every $\eta < \xi$. So, exists $\lambda_{\xi} \in 2^{\mathfrak{c}} \setminus \omega$ such that $g_{\lambda_{\xi}} = g_{\delta_{\xi}}$ and $A_{\lambda_{\xi}} \cap S_{\xi} = \emptyset$. Therefore, making $K_{\xi+1} = K_{\xi} \cup A_{\lambda_{\xi}}$, we obtain $(iii)$, $(iv)$, $(v)$, $(vi)$ and $(vii)$.
    
    Let's see that $(i)$ is true. Given $p \in P_{\xi}$, exists $\mu < \xi$ such that $p \in P_{\mu}$. So, there exists $\beta_1 < \beta_2 < \alpha$ such that $ p- \lim\limits_{n \to \infty} x_{1, \beta_1+n} - x_{1, \beta_2+n} \in \left\langle \left\lbrace x_{m, \eta} \mid m \in \mathbb{N}, \eta \in K_{\mu} \right\rbrace \right\rangle \subset \left\langle \left\lbrace x_{m, \eta} \mid m \in \mathbb{N}, \eta \in K_{\xi} \right\rbrace \right\rangle$. Conversely, let $p \in \mathcal{P}$ such that exists $\gamma_1 < \gamma_2 < \alpha$ such that $ p- \lim\limits_{n \to \infty} x_{1, \gamma_1+n} - x_{1, \gamma_2+n} \in \left\langle \left\lbrace x_{m, \eta} \mid m \in \mathbb{N}, \eta \in K_{\xi} \right\rbrace \right\rangle$. Then, exists $d_{p} \in \mathbb{Q}^{(K_{\xi})}$ such that $x_{d_{p}}$ is that $p-$limit. As a consequence, exists $\gamma < \xi$ such that $\supp{d_{p}} \subset K_{\gamma}$. This is, $p \in P_{\gamma} \subset P_{\xi}$.
    
    Finally, about $(ii)$, let $p \in P_{\xi}$. exists $\gamma < \xi$ such that $p \in P_{\gamma}$. Thereby, exists $\beta < \alpha$ limit such that $p- \lim\limits_{n \to \infty} x_{1, \beta + n} \notin  \left\langle  \lbrace x_{m, \eta} \mid m \in \mathbb{N}, \eta \in 2^{\mathfrak{c}} \setminus S_{\gamma} \rbrace \right\rangle$. Whereas $S_{\gamma} \subset S_{\xi}$, we have $p- \lim\limits_{n \to \infty} x_{1, \beta + n} \notin  \left\langle  \lbrace x_{m, \eta} \mid m \in \mathbb{N}, \eta \in 2^{\mathfrak{c}} \setminus S_{\xi} \rbrace \right\rangle$.  
    \end{proof}
    
    At the end, we present a solution to the Problem 6.3 of \cite{Boero} for every infinite cardinal $\alpha < 2^{\mathfrak c}$.

    \begin{teo} \label{}
    For every infinite cardinal $\alpha < 2^{\mathfrak{c}}$, there exists a torsion-free Abelian topological group $G$ such that $G^{\gamma}$ is countably compact, for each $\gamma < \alpha$, but $G^{\alpha}$ is not countably compact.
    \end{teo}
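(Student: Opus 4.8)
The plan is to mirror the proof of Theorem \ref{Caso2c}, replacing Lemma \ref{Lema2c} by Lemma \ref{Lemaalpha}, and replacing the appeal to Ginsburg and Saks' theorem (which only detects the failure of countable compactness at the $2^{\mathfrak{c}}$-th power) by an explicit non-accumulating sequence. First I would fix the objects produced by Lemma \ref{Lemaalpha}: the increasing chains $\lbrace K_\xi \rbrace$, $\lbrace P_\xi \rbrace$, $\lbrace S_\xi \rbrace$ and the ordinals $\lbrace \lambda_\xi \rbrace$, together with the linearly independent set $\lbrace x_{m,\beta} \rbrace$ of Proposition \ref{Propalpha}. Put $K = \bigcup_{\xi < 2^{\mathfrak{c}}} K_\xi$ and $S = \bigcup_{\xi < 2^{\mathfrak{c}}} S_\xi$, and define $G = \langle \lbrace x_{m,\eta} : m \in \mathbb{N}, \eta \in K \rbrace \rangle$. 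Since the monomorphism $\Phi$ respects the $\mathbb{Q}$-vector space structure, $G = \Phi(\mathbb{Q}^{(K)})$, so $m \cdot x_d = \Phi(m d) \neq 0$ for $d \in \mathbb{Q}^{(K)} \setminus \lbrace 0 \rbrace$ and $m \neq 0$; thus $G$ is torsion-free. I would also record at the outset, from condition $(vi)$ of Lemma \ref{Lemaalpha} and the monotonicity of the chains, that $K \cap S = \emptyset$, whence $K \subseteq 2^{\mathfrak{c}} \setminus S_\xi$ and $G \subseteq \langle \lbrace x_{m,\eta} : \eta \in 2^{\mathfrak{c}} \setminus S_\xi \rbrace \rangle$ for every $\xi$; this containment is what will drive the negative direction.

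For the countable compactness of $G^\gamma$ with $\gamma < \alpha$, I would argue exactly as in Theorem \ref{Caso2c}. An injective sequence in $G^\gamma$ has the form $\left( x_{g_{\theta,\beta}(n)} \right)_{n \in \omega, \, \beta < \gamma}$ with $I_\theta \subseteq K$; since $|I_\theta| < 2^{\mathfrak{c}}$ and $2^{\mathfrak{c}}$ is regular (a consequence of $2^{<2^{\mathfrak{c}}} = 2^{\mathfrak{c}}$), we have $I_\theta \subseteq K_\nu$ for some $\nu < 2^{\mathfrak{c}}$. Because each sequence occurs $2^{\mathfrak{c}}$ times in the enumeration while fewer than $2^{\mathfrak{c}}$ of them are consumed before any stage $\xi < 2^{\mathfrak{c}}$, the bookkeeping built into condition $(iii)$ yields $\eta < 2^{\mathfrak{c}}$ with $g_{\lambda_\eta} = g_\theta$. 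Condition $(iv)$ then places the $p_{\lambda_\eta}$-limit of each coordinate inside $\langle \lbrace x_{m,\eta'} : \eta' \in K_{\eta+1} \rbrace \rangle \subseteq G$, so the whole sequence has a $p_{\lambda_\eta}$-limit in $G^\gamma$ and hence an accumulation point. Finite powers reduce to the countable one by retraction when $\alpha > \omega$, the case $\alpha = \omega$ being classical.

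The crux is showing that $G^\alpha$ is \emph{not} countably compact. I would test it on the sequence $(y_n)_{n \in \omega}$ in $G^\alpha$ whose coordinate at the $\theta$-th limit ordinal below $\alpha$ is $x_{1, \theta + n}$ (there are $\alpha$ such ordinals, each $\theta + n < \alpha \subseteq K$, so $y_n \in G^\alpha$). If $(y_n)$ had an accumulation point, it would be a $p$-limit for some $p \in \omega^*$, forcing $p\text{-}\lim_{n \to \infty} x_{1, \theta + n} \in G$ for every limit $\theta < \alpha$; in particular $p \in \mathcal{P}$. Choosing any two limits $\beta_1 < \beta_2 < \alpha$, the difference $p\text{-}\lim_{n \to \infty}(x_{1,\beta_1+n} - x_{1,\beta_2+n})$ lies in $G = \langle \lbrace x_{m,\eta} : \eta \in K \rbrace \rangle$, hence, being finitely supported and $K$ an increasing union, it lies in $\langle \lbrace x_{m,\eta} : \eta \in K_\xi \rbrace \rangle$ for some $\xi$; by condition $(i)$ this gives $p \in P_\xi$. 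Then condition $(ii)$ produces a limit $\beta < \alpha$ with $p\text{-}\lim_{n \to \infty} x_{1,\beta+n} \notin \langle \lbrace x_{m,\eta} : \eta \in 2^{\mathfrak{c}} \setminus S_\xi \rbrace \rangle$, and since $G \subseteq \langle \lbrace x_{m,\eta} : \eta \in 2^{\mathfrak{c}} \setminus S_\xi \rbrace \rangle$ this contradicts $p\text{-}\lim_{n \to \infty} x_{1,\beta+n} \in G$. Hence $(y_n)$ has no accumulation point and $G^\alpha$ is not countably compact.

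I expect this last paragraph to be the main obstacle. The delicate point is not any single computation but the verification that the ultrafilter witnessing an accumulation point is necessarily captured by the set $P = \bigcup_\xi P_\xi$ manufactured in Lemma \ref{Lemaalpha}: this rests on the fact that \emph{every} pairwise difference of coordinate limits is finitely supported inside the increasing union $K$, so that condition $(i)$ applies, after which the separation $K \cap S = \emptyset$ converts condition $(ii)$ into the desired contradiction. Getting the quantifiers in conditions $(i)$ and $(ii)$ to interlock correctly — one $\xi$ placing all difference-limits in $K_\xi$, the \emph{same} $\xi$ then exhibiting an escaping single limit — is where care is required.
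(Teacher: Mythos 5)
Your proof is correct and follows essentially the same route as the paper: the same group $G$ built from the output of Lemma \ref{Lemaalpha}, the same positive argument via the bookkeeping conditions $(iii)$ and $(iv)$ together with regularity of $2^{\mathfrak{c}}$, and the same refutation of countable compactness of $G^{\alpha}$ through $\mathcal{P}$, conditions $(i)$--$(ii)$, and the disjointness $K \cap S = \emptyset$. The only differences are cosmetic: the paper pads its test sequence with constant coordinates at non-limit ordinals and uses regularity of $2^{\mathfrak{c}}$ to place \emph{all} coordinate $p$-limits in a single $G_{\xi}$ before invoking $(i)$, whereas you index by limit ordinals only and place a single pairwise difference in some $G_{\xi}$ (which suffices, since $G = \bigcup_{\xi} G_{\xi}$ is an increasing union) --- an equivalent, slightly leaner way to conclude $p \in P_{\xi}$.
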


    \begin{proof}
    Let $\alpha < 2^{\mathfrak{c}}$ be an infinite cardinal and consider $\lbrace p_{\xi} \mid \omega \leq \xi < 2^{\mathfrak{c}} \rbrace$, $\lbrace g_{\xi, \gamma} \mid \omega \leq \xi < 2^{\mathfrak{c}}, \: \omega \leq \gamma < \alpha \rbrace$, $\lbrace x_{N, \xi} \mid N \in \mathbb{N}, \xi \in 2^{\mathfrak{c}} \rbrace$, $\lbrace P_{\xi} \mid \xi \in 2^{\mathfrak{c}} \rbrace$, $\lbrace S_{\xi} \mid \xi \in 2^{\mathfrak{c}} \rbrace$, $\lbrace K_{\xi} \mid \xi \in 2^{\mathfrak{c}} \rbrace$ and $\lbrace \lambda_{\xi} \mid \xi \in 2^{\mathfrak{c}} \rbrace$ as in Lemma \ref{Lemaalpha}. For every $\mu < 2^{\mathfrak{c}}$, let $G_{\mu} = \left\langle \left\lbrace x_{m, \eta} \mid m \in \mathbb{N}, \eta \in  K_{\mu} \right\rbrace \right\rangle$. Set $K = \displaystyle\bigcup_{\xi < 2^{\mathfrak{c}}} K_{\xi}$ and $S = \displaystyle\bigcup_{\xi < 2^{\mathfrak{c}}} S_{\xi}$.
    
    We define $G$ as the group generated by $\lbrace x_{m, \eta} \mid m \in \mathbb{N}, \eta \in K \rbrace$. Note that $G = \displaystyle\bigcup_{\xi < 2^{\mathfrak{c}}} G_{\xi}$ and $G \subset \langle \lbrace x_{m, \eta} \mid m \in \mathbb{N}, \eta \in 2^{\mathfrak{c}} \setminus S_{\mu} \rangle \rbrace$, for every $\mu < 2^{\mathfrak{c}}$.  Furthermore, for each $d \in \mathbb{Q}^{(K)}$ and $m \in \mathbb{Z}$ non-zeros, we have $m.x_{d} \neq 0$. It follows that $G$ is a torsion-free topological group.
    
    We claim that $G^{\gamma}$ is countably compact, for every $\gamma \in [\omega, \alpha)$. In fact, let $\gamma \in [ \omega, \alpha)$. Given an injective sequence $(y_n)_{n \in \omega} \subset G^{\gamma}$, exists $\theta < 2^{\mathfrak{c}}$ such that $(y_n)_{n \in \omega} = \left( x_{g_{\theta, \beta}(n)} \right)_{n \in \omega, \beta < \gamma}$. Whereas $I_{\theta} \subset K$ and $2^{\mathfrak{c}}$ is regular, exists $\xi < 2^{\mathfrak{c}}$ such that $g_{\theta, \gamma} = g_{\lambda_{\xi}, \gamma}$. Due to item $(iv)$ of Lemma \ref{Lemaalpha}, we know that $\left( x_{g_{\lambda_{\xi}, \beta}(n)} \right)_{n \in \omega}$ have $p_{\lambda_{\xi}}-$limit in $G_{\xi+1} \subset G$, for every $\beta < \gamma$. This is, $(y_n)_{n \in \omega} = \left( x_{g_{\lambda_{\xi}, \beta}(n)} \right)_{n \in \omega, \beta < \gamma}$ have $p_{\lambda_{\xi}}-$limit in $G^{\gamma}$. Thus, every sequence of $G^{\gamma}$ have an accumulation point. 
    
    Finally, we will show that $G^{\alpha}$ is not countably compact. Consider the sequence $\left( z_{\beta, n} \right)_{\beta < \alpha, n \in \omega}$, where  $z_{\beta, n} = x_{1, \beta+n}$, if $\beta < \alpha$ is limit, and $z_{\beta, n} = x_{1, \beta}$, if $\beta < \alpha$ is not limit. Suppose that $\left( z_{\beta, n} \right)_{\beta < \alpha, n \in \omega}$ have an accumulation point in $G^{\alpha}$, then exists $p \in \omega^*$ such that $p- \lim\limits_{n \to \infty} x_{1, \beta + n} \in G$, for every $\beta < \alpha$ limit. This is, $p \in \mathcal{P}$. Whereas $2^{\mathfrak{c}}$ is regular, exists $\xi < 2^{\mathfrak{c}}$ such that $p- \lim\limits_{n \to \infty} x_{1, \beta + n} \in G_{\xi}$, for every $\beta < \alpha$ limit. Thereby, $p \in P_{\xi}$. But, because of item $(ii)$ of Lemma \ref{Lemaalpha}, exists $\theta < \alpha$ limit such that $p - \lim\limits_{n \to \omega} x_{1, \theta+n} \notin  \left\langle \lbrace x_{m, \eta} \mid m \in \mathbb{N}, \eta \in 2^{\mathfrak{c}} \setminus S_{\xi} \rbrace \right\rangle$. As a consequence, $p - \lim\limits_{n \to \omega} x_{1, \theta+n} \notin  G$, which is false. Therefore, $\left( z_{\beta, n} \right)_{\beta < \alpha, n \in \omega}$ does not have an accumulation point in $G^{\alpha}$.  
    \end{proof}

\section{A Wallace semigroup whose cube is countably compact}

    We recall that a Wallace semigroup is a both-sided cancellative topological semigroup that is not a topological group.
    
    From now on, we denote by $\mathcal H$ the set of all finite family of sequences $F$ that are strictly monotone in $\mathbb{Q}$ such that $F \cup \{\vec{1}\}$ is linearly independent in $\mathbb{Q}^{\omega}$.

    \begin{prop} \label{PropW3}
    Let $r^1, r^2, r^3 \in \mathbb{Q}_+^{\omega}$. There exist $h : \omega \to \omega$ strictly increasing and $F \in \mathcal H$ such that $(r^1 \circ h)(n) = k_1 + \displaystyle\sum_{f \in F} a_f.f(n)$, $(r^2 \circ h)(n) = k_2 + \displaystyle\sum_{f \in F} b_f.f(n)$ and $(r^3 \circ h)(n) = k_3 + \displaystyle\sum_{f \in F} c_f.f(n)$, for every $n \in \omega$, where $k_1, k_2, k_3 \in \mathbb{Q}_+$ and $a_f, b_f, c_f \in \mathbb{Q}$ for any $f \in F$.
    \end{prop}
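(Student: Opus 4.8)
The plan is to pass to a common subsequence $h$ on which all three sequences become either strictly monotone or constant, and then to read off $F$ from a maximal linearly independent subfamily, normalised so that every member vanishes at the first index; this normalisation is exactly what forces the constant terms $k_i$ to be non-negative.

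First I would thin the index set to arrange that $r^1\circ h_0$, $r^2\circ h_0$, $r^3\circ h_0$ are all weakly monotone: every sequence of rationals has a monotone subsequence, and a subsequence of a monotone sequence stays monotone, so choosing the three subsequences successively handles all of them at once. A weakly monotone sequence either takes finitely many values, in which case it is eventually constant, or takes infinitely many values. Passing to a tail makes the eventually-constant ones genuinely constant; for the remaining ones I would build the subsequence greedily, at each step choosing the next index large enough that every infinitely-valued sequence strictly advances (strictly increases or strictly decreases, according to its type). Since there are only finitely many of them and each admits a strictly larger or smaller value beyond any position, such an index always exists. The upshot is a single strictly increasing $h\colon\omega\to\omega$ for which each $r^i\circ h$ is either constant or strictly monotone.

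Next comes the linear algebra. Let $F_0\subseteq\{r^1\circ h, r^2\circ h, r^3\circ h\}$ be a maximal subfamily with $F_0\cup\{\vec 1\}$ linearly independent in $\mathbb Q^\omega$. By maximality every $r^i\circ h$ lies in $\langle F_0\cup\{\vec 1\}\rangle$, and every member of $F_0$ is non-constant (a constant sequence is dependent with $\vec 1$), hence strictly monotone by the previous step. To control the constant terms I would replace $F_0$ by
\[
F=\{\,f-f(0)\vec 1 : f\in F_0\,\},
\]
whose members are strictly monotone, vanish at $0$, satisfy $\langle F\cup\{\vec 1\}\rangle=\langle F_0\cup\{\vec 1\}\rangle$, and still form a set independent together with $\vec 1$; thus $F\in\mathcal H$. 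For each $i$ expand $r^i\circ h=k_i\vec 1+\sum_{f\in F}\lambda^i_f f$ in this basis; evaluating at $n=0$ and using $f(0)=0$ gives $k_i=(r^i\circ h)(0)=r^i(h(0))\ge 0$, so $k_1,k_2,k_3\in\mathbb Q_+$, and the three displayed identities hold with $a_f,b_f,c_f\in\mathbb Q$ the respective coordinates $\lambda^1_f,\lambda^2_f,\lambda^3_f$ (the degenerate case $F=\varnothing$, when all $r^i$ are constant, is covered since $\{\vec 1\}$ is independent).

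The main obstacle is the second step: getting all three sequences simultaneously strictly monotone on one common subsequence while retaining enough structure to run the linear algebra. The observations that make it go through are that weak monotonicity passes to subsequences, that a weakly monotone sequence fails to be strictly monotonisable only when it is eventually constant (and can then be absorbed into $\vec 1$), and that the non-negativity requirement on $k_1,k_2,k_3$ is no extra burden once $F$ is normalised to vanish at the initial index, since then each $k_i$ is just the non-negative initial value of $r^i\circ h$.
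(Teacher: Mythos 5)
Your proof is correct, and it takes a genuinely different route at the heart of the argument. Both proofs share the two preliminary steps: passing to a strictly increasing $h$ along which each $r^i\circ h$ is constant or strictly monotone, and extracting a maximal subfamily $F_0\subseteq\{r^1\circ h,r^2\circ h,r^3\circ h\}$ that is linearly independent together with $\vec{1}$ (the paper's set $I$). They diverge on the crucial point, namely forcing the constants $k_i$ to be non-negative. The paper runs a case analysis on $|I|$ and on the monotonicity types of the dependent sequences: it exploits $r^i\in\mathbb{Q}_+^{\omega}$ to pin down the signs of the coefficients $q_i$ and constants $c_i$ in the dependence relations, and then translates selected basis elements by a positive rational $d$ chosen, case by case, so that every constant term comes out non-negative; this sign bookkeeping occupies essentially the whole proof and is tailored to three sequences. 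Your normalization $F=\{f-f(0)\vec{1}: f\in F_0\}$ eliminates all of this: since every element of $F$ vanishes at the index $0$, evaluating each expansion at $0$ forces $k_i=(r^i\circ h)(0)=r^i(h(0))\geq 0$, and the checks that $F$ is still strictly monotone, spans the same subspace with $\vec{1}$, and remains independent are immediate. Besides brevity, your argument buys generality: nothing in it uses that there are exactly three sequences, so it proves the analogue of Proposition \ref{PropW3} for any finite family in $\mathbb{Q}_+^{\omega}$, whereas it is not at all clear how the paper's single translation parameter $d$ could reconcile the competing sign constraints imposed by many dependent sequences. This bears directly on the remark following the first question of Section 6, which asks how many ``positive translations'' Proposition \ref{PropW3} can accommodate; your proof indicates that the proposition itself imposes no bound, so any obstruction to making higher finite powers of the Wallace semigroup countably compact must lie elsewhere in the construction of Section 5 --- a consequence worth stating and checking explicitly.
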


    \begin{proof}
    Given $r^1, r^2, r^3 \in \mathbb{Q}_+^{\omega}$, exists $h : \omega \to \omega$ strictly increasing such that every $r^i \circ h$ is either constant or strictly increasing or strictly decreasing. Let $I \subset \lbrace 1, 2 , 3 \rbrace$ be such that $\left\lbrace r^i \circ h \mid i \in I \right\rbrace \cup \lbrace \overrightarrow{1} \rbrace$ is a basis of the subspace $\langle \lbrace r^i \circ h \mid i \in \lbrace 1, 2 , 3 \rbrace \rbrace \cup \lbrace \overrightarrow{1} \rbrace \rangle$. 
    
    If $I = \lbrace 1, 2 , 3 \rbrace$ or $I = \emptyset$, clearly is done. Otherwise, we have the next cases.
    
    Case 1:  $I$ have two elements. Let $i,j \in \{ 1,2,3 \}$ such that $I = \{ i,j \}$ and consider $k \in \{ 1,2,3 \} \setminus I$. Then, exists $q_i, q_j, c_k \in \mathbb{Q}$ such that $(r^k \circ h)_n = q_i (r^i \circ h)_n + q_j (r^j \circ h)_n + c_k$, for every $n \in \omega$. Also, we have the next subcases:  
        \begin{enumerate}
            \item[i)] The sequence $r^k \circ h$ is constant. This implies $q_i = 0$, $q_j = 0$ and $c_k \geq 0$. We take $F = \lbrace r^i \circ h, r^j \circ h \rbrace$. 
            \item[ii)] The sequence $r^k \circ h$ is strictly monotone. If $r^i \circ h$ and $r^j \circ h$ have different kind of monotonicity of $r^k \circ h$, we obtain $q_i < 0$, $q_j < 0$ and $c_k > 0$. Hence, $F = \lbrace r^i \circ h, r^j \circ h \rbrace$. Else, assume that $r^i \circ h$ is the only sequence with  the same kind of monotonicity of $r^k \circ h$. So $q_i > 0$ and then we fix some $d \in \mathbb{Q}_+$ greater than $\frac{-c_k}{q_i}$. Therefore, $dq_i + c_k > 0$ and $F = \lbrace (r^i \circ h) - d, r^j \circ h \rbrace$.
        \end{enumerate}

    Case 2: $I$ have one element. Let $k \in \{ 1,2,3 \}$ be such that $I = \{ k \}$. For any $i,j \in \{ 1,2,3 \} \setminus I$, there exist $q_i, q_j, c_i, c_j \in \mathbb{Q}$ such that $(r^i \circ h)_n = q_i (r^k \circ h)_n + c_i$ and $(r^j \circ h)_n = q_j (r^k \circ h)_n + c_j$, for every $n \in \omega$. We claim that exists $d \in \mathbb{Q}_+$ such that $q_id + c_i \geq 0$, $q_jd + c_j \geq 0$ and $F = \lbrace (r^k \circ h) - d \rbrace$. In fact, we have the next subcases:  
        \begin{enumerate}
            \item[i)] The two sequences are constants. We obtain $q_i =0$, $q_j = 0$, $c_i \geq 0$ and $c_j \geq 0$. It works for $d = 0$.
            \item[ii)] Only one sequence is constant. Assume that $r^j \circ h$ is constant. Whenever $r^i \circ h$ have same or different kind of monotonicity of $r^k \circ h$, we either have $q_i > 0$, in which case we take $d \in \mathbb{Q}_+$ greater than $\frac{-c_i}{q_i}$, or $q_i < 0$ and $c_i > 0$, in which case we set $d=0$.
            \item[iii)] The two sequences are strictly monotone. If $r^i \circ h$ and $r^j \circ h$ have the same kind of monotonicity, we either have $q_i < 0$, $q_j < 0$, $c_i > 0$ and $c_j > 0$, in which case $d=0$ is enough, or $q_i > 0$ and $q_j > 0$, in which case we fix $d \in \mathbb{Q}_+$ such that $d > \max \left\lbrace \frac{-c_i}{q_i}, \frac{-c_j}{q_j} \right\rbrace$. Else, assume that $r^i \circ h$ is the only sequence which have the same kind of monotonicity of $r^k \circ h$. So $q_i > 0$, $q_j < 0$ and $c_j > 0$. Then, we obtain $\frac{c_i}{q_i} - \frac{c_j}{q_j} = \frac{(r^j \circ h)_n}{-q_j} + \frac{(r^i \circ h)_n}{q_i} > 0$, for every $n \in \omega$. Hence, we choose a positive rational number $d \in \left( \frac{-c_i}{q_i}, \frac{c_j}{-q_j} \right)$.  
        \end{enumerate}
    \end{proof}

    We will take advantage of Theorem \ref{homo} to define a group topology on $\mathbb Q ^{(\mathfrak c)}$ as follows.

    \begin{lem} \label{LemaW3}
    Fix an enumeration $\lbrace F_{\xi} \mid \omega \leq \xi < \mathfrak{c} \rbrace$ of $\mathcal H$ and let $\lbrace p_{\xi} \mid \omega \leq \xi < \mathfrak{c} \rbrace$ be a family of incomparable selective ultrafilters. Then exists a group topology on $\mathbb{Q}^{(\mathfrak{c})}$ such that, for every $\xi \in \mathfrak{c} \setminus \omega$, $\mathbb{Q}^{(\mathfrak{c} \setminus \lbrace 0 \rbrace)}$ is $p_{\xi}-$compact and $p_{\xi} - \lim\limits_{n \to \infty} \frac{1}{N}f_n \chi_0 \in \mathbb{Q}^{(\mathfrak{c} \setminus \lbrace 0 \rbrace)}$, for any $N \in \mathbb{N}$ and $f \in F_{\xi}$.   
    \end{lem}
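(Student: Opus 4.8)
The plan is to realize the desired topology as the initial topology induced by a point–separating family of characters produced by Theorem \ref{homo}, after arranging the sequences and index sets fed into that theorem so that, for each $\xi$, the assigned $p_\xi$-limits simultaneously diagonalize a basis of $Ult_{p_\xi}(\mathbb{Q}^{(\mathfrak{c}\setminus\{0\})})$ and send each sequence $(f(n)\chi_0)_{n\in\omega}$, $f\in F_\xi$, to a coordinate different from $0$. Everything hinges on feeding Theorem \ref{homo} a family $\{f_\beta\}$ and pairwise disjoint sets $\{A_\xi\}$ chosen by a bookkeeping recursion.

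First I would fix, for each $\xi\in\mathfrak{c}\setminus\omega$, an enumeration $\{h^\xi_\rho\mid\rho<\mathfrak{c}\}$ of $\left(\mathbb{Q}^{(\mathfrak{c}\setminus\{0\})}\right)^\omega$, which is legitimate since $\left|\left(\mathbb{Q}^{(\mathfrak{c}\setminus\{0\})}\right)^\omega\right|=\mathfrak{c}^\omega=\mathfrak{c}$. Then, by a transfinite recursion of length $\mathfrak{c}$ driven by a bookkeeping surjection onto $(\mathfrak{c}\setminus\omega)\times\mathfrak{c}$ hitting each $\xi$ cofinally, I would build pairwise disjoint sets $A_\xi=A_\xi'\cup A_\xi''\subseteq\mathfrak{c}\setminus\omega$ together with injective sequences $f_\beta$, $\beta\in\bigcup_\xi A_\xi$, satisfying $\bigcup_n\supp f_\beta(n)\subset\beta$, as follows. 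When a stage $(\xi,\rho)$ occurs and $[h^\xi_\rho]_{p_\xi}$ is not yet spanned by the constants $\{[\chi_{\overrightarrow{\mu}}]_{p_\xi}\mid\mu\in\mathfrak{c}\setminus\{0\}\}$ together with the classes already chosen for $\xi$, I pick (using selectivity of $p_\xi$, which yields a set in $p_\xi$ on which $h^\xi_\rho$ is injective, since otherwise it would be $p_\xi$-equal to a constant and lie in that span) an injective $h$ with $[h]_{p_\xi}=[h^\xi_\rho]_{p_\xi}$, and a coordinate $\beta>\sup\bigcup_n\supp h(n)$ not previously used; I set $f_\beta:=h$ and place $\beta\in A_\xi'$. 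Such a $\beta$ always exists because fewer than $\mathfrak{c}$ coordinates have been used at any stage and every support is countable, hence bounded, as $\cf(\mathfrak{c})>\omega$. Independently, once per $\xi$ I reserve finitely many fresh coordinates, one per $f\in F_\xi$, place them in $A_\xi''$, and set the corresponding $f_\beta$ to be $n\mapsto f(n)\chi_0$, whose support is $\{0\}\subset\beta$. At the end $\{[f_\beta]_{p_\xi}\mid\beta\in A_\xi'\}\cup\{[\chi_{\overrightarrow{\mu}}]_{p_\xi}\mid\mu\in\mathfrak{c}\setminus\{0\}\}$ is a basis of $Ult_{p_\xi}(\mathbb{Q}^{(\mathfrak{c}\setminus\{0\})})$.

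The linear-independence hypothesis of Theorem \ref{homo} then follows from the decomposition $Ult_{p_\xi}(\mathbb{Q}^{(\mathfrak{c})})=Ult_{p_\xi}(\mathbb{Q}^{(\{0\})})\oplus Ult_{p_\xi}(\mathbb{Q}^{(\mathfrak{c}\setminus\{0\})})$: the $A_\xi'$-classes together with $\{[\chi_{\overrightarrow{\mu}}]_{p_\xi}\mid\mu\neq0\}$ are independent by construction, while $\{[(f(n)\chi_0)_n]_{p_\xi}\mid f\in F_\xi\}\cup\{[\chi_{\overrightarrow{0}}]_{p_\xi}\}$ is independent because $F_\xi\cup\{\overrightarrow{1}\}$ is linearly independent in $\mathbb{Q}^\omega$, hence in $Ult_{p_\xi}(\mathbb{Q})$, and the two families sit in complementary summands. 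Completing $\{f_\beta\}$ to all coordinates by arbitrary injective sequences with the required support bound and feeding $\{p_\xi\}$, $\{f_\beta\}$, $\{A_\xi\}$ into Theorem \ref{homo}, for each nonzero $d$ I obtain $\varphi_d:\mathbb{Q}^{(\mathfrak{c})}\to\mathbb{T}$ with $\varphi_d(d)\neq0$ and $p_\xi\text{-}\lim_n\varphi_d\!\left(\frac{1}{N}f_\beta(n)\right)=\varphi_d\!\left(\frac{1}{N}\chi_\beta\right)$ for all $\beta\in A_\xi$. I would then equip $\mathbb{Q}^{(\mathfrak{c})}$ with the initial topology of the diagonal $\Phi=(\varphi_d)_{d\neq0}:\mathbb{Q}^{(\mathfrak{c})}\to\mathbb{T}^{\mathfrak{c}}$, which is a monomorphism since $\varphi_x(x)\neq0$ for $x\neq0$, so this is a Hausdorff group topology.

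Finally I would read off the two conclusions. For fixed $\xi$ the assigned limits of the basic sequences are $\varphi_d\!\left(\frac{1}{N}\chi_\beta\right)$ and $\varphi_d\!\left(\frac{1}{N}\chi_\mu\right)$ with $\beta\in A_\xi'$ and $\mu\neq0$, i.e. $\Phi$-images of elements of $\mathbb{Q}^{(\mathfrak{c}\setminus\{0\})}$; Proposition \ref{plim} applied with $M=\left(\mathbb{Q}^{(\mathfrak{c}\setminus\{0\})}\right)^\omega$ then gives that every sequence in $\mathbb{Q}^{(\mathfrak{c}\setminus\{0\})}$ has a $p_\xi$-limit inside $\mathbb{Q}^{(\mathfrak{c}\setminus\{0\})}$ (pulling the limit in $\mathbb{T}^{\mathfrak{c}}$ back through the injective $\Phi$), so the subgroup is $p_\xi$-compact. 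For $f\in F_\xi$ the sequence $(f(n)\chi_0)_n$ is exactly $f_\beta$ for the reserved $\beta\in A_\xi''$, so Theorem \ref{homo} yields $p_\xi\text{-}\lim_n\frac{1}{N}f(n)\chi_0=\frac{1}{N}\chi_\beta\in\mathbb{Q}^{(\mathfrak{c}\setminus\{0\})}$. I expect the main obstacle to be precisely the simultaneous bookkeeping of the second paragraph: spanning $Ult_{p_\xi}(\mathbb{Q}^{(\mathfrak{c}\setminus\{0\})})$ for all $\mathfrak{c}$ ultrafilters at once while keeping the $A_\xi$ disjoint, honoring $\supp f_\beta\subset\beta$, and never using $0$ as a limit coordinate — it is this last point that forces every $p_\xi$-limit, in particular those of the sequences $(f(n)\chi_0)_n$, out of the $0$-coordinate and into $\mathbb{Q}^{(\mathfrak{c}\setminus\{0\})}$, which is what ultimately makes the associated positive cone a cancellative semigroup that is not a group.
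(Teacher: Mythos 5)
Your construction follows the paper's own proof of Lemma \ref{LemaW3} essentially step for step: like the paper, you assemble pairwise disjoint sets $A_\xi$ whose attached sequences consist of (i) the sequences $(f(n)\chi_0)_{n\in\omega}$ for $f\in F_\xi$ and (ii) representatives of a basis of $Ult_{p_\xi}\left(\mathbb{Q}^{(\mathfrak{c}\setminus\{0\})}\right)$ over the constant classes; you then feed these into Theorem \ref{homo}, equip $\mathbb{Q}^{(\mathfrak{c})}$ with the initial topology of the point-separating family $(\varphi_d)_{d\neq 0}$, and read off $p_\xi$-compactness of $\mathbb{Q}^{(\mathfrak{c}\setminus\{0\})}$ from Proposition \ref{plim} together with the reserved limits $\frac{1}{N}\chi_\beta$. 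Your transfinite bookkeeping recursion (including the use of selectivity to pass to injective representatives, and $\cf(\mathfrak{c})>\omega$ to bound supports) is just a more explicit rendering of what the paper posits directly when it demands families $\{h_\alpha \mid \alpha\in A_\xi\}$ with its three bulleted properties, so structurally the two proofs coincide.

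There is, however, one step where your written justification is invalid, and it is exactly the step the paper itself leaves unjustified. You claim that linear independence of $F_\xi\cup\{\vec{1}\}$ in $\mathbb{Q}^\omega$ implies linear independence of the classes $\left\{[(f(n)\chi_0)_n]_{p_\xi} \mid f\in F_\xi\right\}\cup\left\{[\chi_{\overrightarrow{0}}]_{p_\xi}\right\}$ (``hence in $Ult_{p_\xi}(\mathbb{Q})$''). That implication is false: independence in $\mathbb{Q}^\omega$ says no nontrivial combination vanishes everywhere, whereas independence in the ultrapower requires that no nontrivial combination vanishes on a set of $p_\xi$. Concretely, let $f_1(n)=n$ for all $n$, and let $f_2(0)=-3/2$, $f_2(n)=n-1$ for $n\geq 1$; both are strictly increasing, $\{f_1,f_2,\vec{1}\}$ is linearly independent in $\mathbb{Q}^\omega$, so $\{f_1,f_2\}\in\mathcal{H}$, yet $f_1-f_2=\vec{1}$ off a finite set, hence $[f_1]_p=[f_2]_p+[\vec{1}]_p$ for \emph{every} free ultrafilter $p$. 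For such an $F_\xi$ the independence hypothesis of Theorem \ref{homo} cannot be arranged at all, and worse, the lemma's conclusion is unattainable: in any Hausdorff group topology, if the $p_\xi$-limits of $\frac{1}{N}f_1(n)\chi_0$ and $\frac{1}{N}f_2(n)\chi_0$ both exist, they differ by $\frac{1}{N}\chi_0\notin\mathbb{Q}^{(\mathfrak{c}\setminus\{0\})}$, so they cannot both lie in $\mathbb{Q}^{(\mathfrak{c}\setminus\{0\})}$. The paper hides the same leap behind the phrase ``As a consequence, $\{[h_\beta]_{p_\xi} \mid \beta\in A_\xi\}\cup\{[\chi_{\overrightarrow{\mu}}]_{p_\xi}\mid\mu\in\mathfrak{c}\}$ is linearly independent,'' so your proposal is faithful to the paper; but both arguments only go through if $\mathcal{H}$ is defined more restrictively (say, requiring that no nontrivial rational combination of $F\cup\{\vec{1}\}$ vanishes on an infinite set, with Proposition \ref{PropW3} strengthened accordingly by a further refinement of $h$), and as written your ``hence'' is a genuine gap rather than a routine verification.
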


    \begin{proof}
    Let $\lbrace A_{\xi} \mid \omega \leq \xi < \mathfrak{c} \rbrace \subset [ \mathfrak{c} \setminus \omega ]^{\mathfrak{c}}$ be a family of disjoint subsets. For every $\xi \in  \mathfrak{c} \setminus \omega$, consider an increasing enumeration $\lbrace \lambda_{\mu} \mid \mu < \mathfrak{c} \rbrace$ of $A_{\xi}$ and let $\lbrace h_{\alpha} \mid \alpha \in A_{\xi} \rbrace$ be a family of injective sequences of $\mathbb{Q}^{(\mathfrak{c})}$ such that
        \begin{itemize}
            \item $\displaystyle\bigcup_{n \in \omega} \supp{h_{\alpha}(n)} \subset \alpha$, for every $\alpha \in A_{\xi}$, 
            \item $\lbrace h_{\lambda_{\mu}} \mid \mu < \vert F_{\xi} \vert \rbrace$ is an indexation of $\lbrace f. \chi_0 \mid f \in F_{\xi} \rbrace$,
            \item $\lbrace [h_{\lambda_{\mu}}]_{p_{\xi}} \mid \vert F_{\xi} \vert \leq \mu < \mathfrak{c} \rbrace \cup \lbrace [ \chi_{\overrightarrow{\mu}} ]_{p_{\xi}} \mid \mu \in \mathfrak{c} \setminus \lbrace 0 \rbrace \rbrace$ is a basis of $\left( \mathbb{Q}^{(\mathfrak{c} \setminus \lbrace 0 \rbrace)} \right)^{\omega} \big/ p_{\xi}$.
        \end{itemize}
    As a consequence, $\lbrace [h_{\beta}]_{p_{\xi}} \mid \beta \in A_{\xi} \rbrace \cup \lbrace [ \chi_{\overrightarrow{\mu}} ]_{p_{\xi}} \mid \mu \in \mathfrak{c} \rbrace$ is linearly independent, for each $\xi \in  \mathfrak{c} \setminus \omega$. In virtue of Theorem \ref{homo}, for every $d \in \mathbb{Q}^{(\mathfrak{c})}$ non-zero, exists a homomorphism $\varphi_d : \mathbb{Q}^{(\mathfrak{c})} \to \mathbb{T}$ such that 
        \begin{enumerate}
            \item[a)] $\varphi_d(d) \neq 0$,
            \item[b)] $p_{\xi}- \lim\limits_{n \to \infty} \varphi_d \left( \frac{1}{N} h_{\beta}(n) \right) = \varphi_d \left( \frac{1}{N} \chi_{\beta} \right)$, for any $N \in \mathbb{N}$, $\beta \in A_{\xi}$ and $\xi \in [\omega, \mathfrak{c})$.
        \end{enumerate}
    Hence, the group topology on $\mathbb{Q}^{(\mathfrak{c})}$ induced by these homomorphisms is such that $p_{\xi}- \lim\limits_{n \to \infty}  \frac{1}{N} h_{\beta}(n) = \frac{1}{N} \chi_{\beta}$, for any $N \in \mathbb{N}$, $\beta \in A_{\xi}$ and $\xi \in [\omega, \mathfrak{c})$. Therefore, for every $\xi \in \mathfrak{c} \setminus \omega$, $\mathbb{Q}^{(\mathfrak{c} \setminus \lbrace 0 \rbrace)}$ is $p_{\xi}-$compact and $p_{\xi} - \lim\limits_{n \to \infty} \frac{1}{N}f_n \chi_0 \in \mathbb{Q}^{(\mathfrak{c} \setminus \lbrace 0 \rbrace)}$, for any $N \in \mathbb{N}$ and $f \in F_{\xi}$.  
    \end{proof}

    Finally, we provide a solution to the Problem 6.4 of \cite{Boero} for the particular case $\kappa = 3$. 

    \newpage

    \begin{teo}
    If exists $\mathfrak{c}$ incomparable selective ultrafilters, then there exists a Wallace semigroup $S$ such that $S^3$ is countably compact.
    \end{teo}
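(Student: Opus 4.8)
The plan is to realize $S$ as the ``non-negative half-space'' $S=\{x\in\mathbb{Q}^{(\mathfrak{c})}\mid x(0)\ge 0\}$, equipped with the subspace topology coming from the group topology on $\mathbb{Q}^{(\mathfrak{c})}$ produced by Lemma \ref{LemaW3}, applied to a fixed enumeration $\{F_\xi\mid\omega\le\xi<\mathfrak{c}\}$ of $\mathcal{H}$ and a family $\{p_\xi\mid\omega\le\xi<\mathfrak{c}\}$ of $\mathfrak{c}$ incomparable selective ultrafilters. First I would record the easy structural facts: $S$ is closed under addition and contains $0$, so it is a subsemigroup; being a subsemigroup of the abelian group $\mathbb{Q}^{(\mathfrak{c})}$ it is both-sided cancellative; and the restriction of the continuous group addition makes it a topological semigroup (Hausdorff, since the homomorphisms $\varphi_d$ separate points). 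It is not a topological group because it is not even algebraically a group: $\chi_0\in S$ while its inverse $-\chi_0$ has $0$-th coordinate $-1<0$, so $-\chi_0\notin S$. Since countable compactness passes to continuous images and $S$ is a factor of $S^3$, once $S^3$ is shown to be countably compact it will follow that $S$ is a countably compact, both-sided cancellative topological semigroup that is not a topological group, that is, a Wallace semigroup.

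The core of the argument is to prove that $S^3$ is countably compact, for which it suffices to show that every sequence $(y_n)_{n\in\omega}$ in $S^3$ has an accumulation point in $S^3$; write $y_n=(y^1_n,y^2_n,y^3_n)$ with each $y^i_n\in S$. The distinguished coordinate $0$ is the only obstruction to taking limits inside $S$: for $i\in\{1,2,3\}$ put $r^i(n)=y^i_n(0)$, which lies in $\mathbb{Q}_+^{\omega}$ because $y^i_n\in S$. I would then invoke Proposition \ref{PropW3} on $r^1,r^2,r^3$ to obtain a strictly increasing $h:\omega\to\omega$ and a family $F\in\mathcal{H}$ with $(r^i\circ h)(n)=k_i+\sum_{f\in F}c^i_f\,f(n)$, where the constants $k_i\in\mathbb{Q}_+$ are non-negative and $c^i_f\in\mathbb{Q}$. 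Since $\{F_\xi\}$ enumerates $\mathcal{H}$, we have $F=F_\xi$ for some $\xi\in[\omega,\mathfrak{c})$, and I will take the $p_\xi$-limit of the subsequence $(y_{h(n)})_{n\in\omega}$; as $h$ is injective, such a limit is automatically an accumulation point of the original sequence.

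To see that this $p_\xi$-limit exists and stays in $S^3$, decompose each coordinate as
\[
y^i_{h(n)}=\bigl(y^i_{h(n)}-(r^i\circ h)(n)\,\chi_0\bigr)+k_i\chi_0+\sum_{f\in F}c^i_f\,f(n)\,\chi_0 .
\]
The first summand lies in $\mathbb{Q}^{(\mathfrak{c}\setminus\{0\})}$, since its $0$-th coordinate vanishes, and $\mathbb{Q}^{(\mathfrak{c}\setminus\{0\})}$ is $p_\xi$-compact by Lemma \ref{LemaW3}, so it has a $p_\xi$-limit $L_i\in\mathbb{Q}^{(\mathfrak{c}\setminus\{0\})}$; the middle term is constant; and for each $f\in F_\xi$ the $p_\xi$-limit of $f(n)\chi_0$ lies in $\mathbb{Q}^{(\mathfrak{c}\setminus\{0\})}$ by the second clause of Lemma \ref{LemaW3}, the rational coefficient $c^i_f=m/N$ being handled through the $\frac{1}{N}f_n\chi_0$ form together with continuity of multiplication by $m$. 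By additivity of $p_\xi$-limits in a topological group, $p_\xi-\lim y^i_{h(n)}=L_i+k_i\chi_0+\sum_{f\in F}c^i_f\,w_f$ with all $L_i,w_f\in\mathbb{Q}^{(\mathfrak{c}\setminus\{0\})}$; hence its $0$-th coordinate equals $k_i\ge 0$, so it belongs to $S$. Thus $p_\xi-\lim y_{h(n)}\in S^3$ is the required accumulation point, and $S^3$ is countably compact.

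I expect the genuine difficulty to be entirely packaged into Proposition \ref{PropW3}: the whole argument hinges on being able to rewrite three non-negative $0$-coordinate sequences simultaneously in terms of a single linearly independent family $F\in\mathcal{H}$ \emph{with non-negative constant terms} $k_i$, since it is precisely the non-negativity of the $k_i$ that keeps the limits in the half-space $S$. This is also where the number $3$ is forced: the case analysis underlying Proposition \ref{PropW3} is tailored to three sequences, and it is not clear that one can simultaneously normalize more sequences in this way, which is why this method yields countable compactness of the cube rather than of higher powers.
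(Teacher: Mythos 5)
Your proposal is correct and follows essentially the same route as the paper: your half-space $S=\{x\in\mathbb{Q}^{(\mathfrak{c})}\mid x(0)\ge 0\}$ is exactly the paper's $S=\{r\chi_0\mid r\in\mathbb{Q}_+\}\oplus\mathbb{Q}^{(\mathfrak{c}\setminus\{0\})}$, and you use Proposition \ref{PropW3} and Lemma \ref{LemaW3} in the same way, splitting each coordinate into its $0$-th component (rewritten with non-negative constant term $k_i$ along the subsequence $h$) plus a part in the $p_\xi$-compact subgroup $\mathbb{Q}^{(\mathfrak{c}\setminus\{0\})}$. If anything, you are slightly more explicit than the paper on two points it leaves implicit: that the $p_\xi$-limit is taken along the subsequence $(y_{h(n)})_n$ (which suffices since $h$ is injective), and the verification that $S$ is indeed a Wallace semigroup.
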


    \begin{proof}
    Let $\lbrace F_{\xi} \mid \omega \leq \xi < \mathfrak{c} \rbrace$, $\lbrace p_{\xi} \mid \omega \leq \xi < \mathfrak{c} \rbrace$ and $\tau$ be a group topology on $\mathbb{Q}^{(\mathfrak{c})}$ as in Lemma \ref{LemaW3}. We define $S = \lbrace r \chi_0 \mid r \in \mathbb{Q}_+ \rbrace \oplus \mathbb{Q}^{(\mathfrak{c} \setminus \lbrace 0 \rbrace)}$ and notice that $(S, \tau)$ is a Wallace semigroup. 

    For every sequence $(y_n)_{n \in \omega} \subset S^3$, there exists $r^1, r^2, r^3 \in \mathbb{Q}_+^{\omega}$ and $g^1, g^2, g^3 \in \left( \mathbb{Q}^{(\mathfrak{c} \setminus \lbrace 0 \rbrace)} \right)^{\omega}$ such that $y_n = (r^1_n \chi_0, r^2_n \chi_0, r^3_n \chi_0) + (g^1_n, g^2_n, g^3_n)$, for each $n \in \omega$. By Proposition \ref{PropW3}, exists $h : \omega \to \omega$ strictly increasing and $\xi \in \mathfrak{c} \setminus \omega$ such that
        \begin{eqnarray*}
            (r^1 \circ h)(n) \chi_0 & = & k_1 \chi_0 + \displaystyle\sum_{f \in F_{\xi}} a_f.f(n) \chi_0, \\
            (r^2 \circ h)(n) \chi_0 & = & k_2 \chi_0 + \displaystyle\sum_{f \in F_{\xi}} b_f.f(n) \chi_0, \\
            (r^3 \circ h)(n) \chi_0 & = &  k_3 \chi_0 + \displaystyle\sum_{f \in F_{\xi}} c_f.f(n) \chi_0,
        \end{eqnarray*}
    for every $n \in \omega$, where $k_1, k_2, k_3 \in \mathbb{Q}_+$ and $a_f, b_f, c_f \in \mathbb{Q}$ for any $f \in F_{\xi}$. We can consider $a_f = \frac{m_{a_f}}{n_{a_f}}$, $b_f = \frac{m_{b_f}}{n_{b_f}}$ and $c_f = \frac{m_{c_f}}{n_{c_f}}$, where $m_{a_f}, m_{b_f}, m_{c_f} \in \mathbb Z$ and $n_{a_f}, n_{b_f}, n_{c_f}  \in \mathbb N$, for any $f \in F_{\xi}$. 

    Then,  
        \begin{eqnarray*}
            p_{\xi}- \lim\limits_{n \to \infty} r^1_n \chi_0 & = & k_1 \chi_0 + \displaystyle\sum_{f \in F_{\xi}} m_{a_f}. \left( p_{\xi}- \lim\limits_{n \to \infty} \frac{1}{n_{a_f}} f(n) \chi_0 \right) \in S, \\
            p_{\xi}- \lim\limits_{n \to \infty} r^2_n \chi_0 & = & k_2 \chi_0 + \displaystyle\sum_{f \in F_{\xi}} m_{b_f}. \left( p_{\xi}- \lim\limits_{n \to \infty} \frac{1}{n_{b_f}}  f(n) \chi_0 \right) \in S, \\
            p_{\xi}- \lim\limits_{n \to \infty} r^3_n \chi_0 & = &  k_3 \chi_0 + \displaystyle\sum_{f \in F_{\xi}} m_{c_f} . \left( p_{\xi}- \lim\limits_{n \to \infty} \frac{1}{n_{c_f}} f(n) \chi_0 \right) \in S.
        \end{eqnarray*}
    Furthermore, we have $p_{\xi}- \lim\limits_{n \to \infty} g^i_n \in \mathbb{Q}^{(\mathfrak{c} \setminus \lbrace 0 \rbrace)}$, for every $1 \leq i \leq 3$. Therefore, $p_{\xi}- \lim\limits_{n \to \infty} y_n \in S^3$. Hence, $S^3$ is countably compact.
    \end{proof}

\section{Some remarks and questions}
	
    Here we point out some remarks and natural questions that are still open. We recall the following question from \cite{Boero}.   

    \begin{question}
    For which cardinal $\kappa \in (3, 2^{\mathfrak{c}}]$, there exists a Wallace semigroup whose powers smaller than $\kappa$ are countably compact?
    \end{question}
    
    For case $\kappa \leq \omega$, it would be interesting to find the largest number, if it exists, that admits more positive translations so that the Proposition \ref{PropW3} continues to work. Also, we already know that there are no Wallace semigroups $S$ such that  $S^{2^\mathfrak c}$ is countably compact (\cite{T96}) which gives the upper limitation for $\kappa$ in the question above.
    
    Moreover, this suggest to us adapting Comfort's question for Wallace semigroups.

    \begin{question}
    For every cardinal $\alpha \leq 2^{\mathfrak{c}}$, there exists a Wallace semigroup $S$ such that $S^{\gamma}$ is countably compact for all $\gamma < \alpha$, but $S^{\alpha}$ is not countably compact?
    \end{question}

	\bibliography{gruposcomfort}{}
	\bibliographystyle{amsplain}

\end{document}